\documentclass[preprint]{elsarticle}

\usepackage{amssymb}
\usepackage{indentfirst}
\usepackage{graphicx}
\usepackage{epstopdf}
\usepackage{epsfig}
\usepackage{overpic}
\usepackage{array}
\usepackage{color}
\usepackage{float}
\usepackage{amsmath,amssymb, amsthm}
\usepackage[mathscr]{euscript}
\usepackage{latexsym,bm}
\usepackage{booktabs}
\usepackage{bbm}
\usepackage{diagbox}
 \usepackage[all]{xy}
\usepackage{diagbox}
\usepackage{algorithm}  
\usepackage{algorithmicx} 
\usepackage{algpseudocode} 
\usepackage{mathtools} 

\usepackage{multirow}
\usepackage{subcaption}   % 推荐（不要用老的 subfigure）
\usepackage[colorlinks=true,
linkcolor=blue,
citecolor=blue,
urlcolor=blue]{hyperref}
\usepackage{empheq}

\usepackage{graphicx}
\usepackage{subcaption}

\newtheorem{theorem}{Theorem}
\newtheorem{lemma}{Lemma}

\newtheorem{remark}{Remark}
\newtheorem{example}{\textup{\textbf{Example}}}

\def\br{\bm{x}}

\def\br{\bm{r}}

\makeatletter
\@addtoreset{equation}{section}
\makeatother

\begin{document}

\newsavebox{\tablebox}

\begin{frontmatter}

%% Title, authors and addresses

%% use the tnoteref command within \title for footnotes;
%% use the tnotetext command for theassociated footnote;
%% use the fnref command within \author or \address for footnotes;
%% use the fntext command for theassociated footnote;
%% use the corref command within \author for corresponding author footnotes;
%% use the cortext command for theassociated footnote;
%% use the ead command for the email address,
%% and the form \ead[url] for the home page:
%% \title{Title\tnoteref{label1}}
%% \tnotetext[label1]{}
%% \author{Name\corref{cor1}\fnref{label2}}
%% \ead{email address}
%% \ead[url]{home page}
%% \fntext[label2]{}
%% \cortext[cor1]{}
%% \affiliation{organization={},
%%             addressline={},
%%             city={},
%%             postcode={},
%%             state={},
%%             country={}}
%% \fntext[label3]{}

\title{Nonlocal modeling of opinion alignment and environmental feedback: 
	Spatial aggregation and non-consensus patterns}

%% use optional labels to link authors explicitly to addresses:
%% \author[label1,label2]{}
%% \affiliation[label1]{organization={},
%%             addressline={},
%%             city={},
%%             postcode={},
%%             state={},
%%             country={}}
%%
%% \affiliation[label2]{organization={},
%%             addressline={},
%%             city={},
%%             postcode={},
%%             state={},
%%             country={}}
\author[label1]{Rui Wang}
\ead{rwang0913@mail.bnu.edu.cn}

\author{Yunfeng Xiong\corref{cor1}\fnref{label1,label2}}
\ead{yfxiong@bnu.edu.cn}

\author[label1,label2]{Zhengru Zhang}
\ead{zrzhang@bnu.edu.cn}

\author[label3]{Xiaofei Zhao}
\ead{matzhxf@whu.edu.cn}

\cortext[cor1]{To whom correspondence should be addressed}
           
\affiliation[label1]{organization={School of Mathematical Sciences, Beijing Normal University},	
%Department and Organization
 %           addressline={}, 
   %         city={},
    postcode={100875},
    state={Beijing},      country={P. R. China}
 }

\affiliation[label2]{organization={Laboratory of Mathematics and Complex Systems, Ministry of Education, Beijing Normal University},
postcode={100875},
state={Beijing},      country={P. R. China}}

\affiliation[label3]{organization={School of Mathematics and Statistics and Computational Sciences Hubei Key Laboratory, Wuhan University},
	city={Wuhan},
	postcode={430072},
	state={Hubei},      country={P. R. China}}

\begin{abstract}
The formation of public opinion in modern information environments is
shaped by the interplay between social conformity and information
exposure. While social interactions promote opinion alignment,
heterogeneous visibility and selective exposure may reinforce local
agreement, a mechanism commonly associated with the echo chamber effects.
To describe how such reinforcement influences spatially heterogeneous
opinion activity and non-consensus patterns, we propose a spatial opinion
dynamics model with attention-mediated feedback.
The model couples nonlocal alignment with an evolving attention field and
captures a self-reinforcing mechanism in which regions of high opinion
activity attract greater visibility. Starting from  agent-based jump mechanism inspired by bounded confidence interactions and biased random walks induced by environments,
we formally derive a nonlocal advection-cross-diffusion system, where
opinion transport is driven by nonlocal conformity and modulated by
attention-dependent redistribution.
We characterize the transition from spatially homogeneous consensus
states to non-consensus clustered regimes through linear stability
analysis of the homogeneous equilibrium. 
The results show that
attention-mediated feedback  has an explicit correction on the instability threshold and
enlarges the parameter regime in which clustering occurs, thereby
promoting persistent spatial heterogeneity and non-consensus
patterns.
Numerical simulations based on a structure-preserving IMEX
spectral method support the theoretical predictions and quantify the
resulting aggregation phenomena. In particular, both the selected wavelength and the cluster spacing scale as $2R$ in the weak diffusion regime, resembling the 2R-conjecture in the agent-based modeling. These findings provide a macroscopic
description of how nonlocal alignment and environmental feedback
jointly shape spatial signatures of non-consensus patterns.
\end{abstract}

\begin{keyword}
	Opinion dynamics \sep non-consensus \sep nonlocal aggregation \sep linear stability \sep structure-preserving schemes \sep phase transition

\MSC[2020]
35K55 \sep	%Linear waves in solid mechanics
35B36 \sep 	%Numerical computation using splines
35Q91 \sep 	%Numerical solution of discretized equations for initial value and initial-boundary value problems involving PDEs
65M70 \sep	%Parallel numerical computation
92D25	%Distributed algorithms

% or \MSC[2008] code \sep code (2000 is the default)

\end{keyword}

\end{frontmatter}

\section{Introduction}

Public opinion plays a central role in shaping collective decisions and 
social dynamics in modern societies. In many real situations, opinion formation exhibits different spatial patterns across multiple scales \cite{BalsaBarreiro2022SocialSpace}.
This is illustrated, for example, by activities of public opinion polls and elections,
which show pronounced spatial variation in voting preferences. Local
communities may display relatively coherent support for a candidate,
while different regions exhibit distinct patterns of support across space
\cite{BalsaBarreiro2022SocialSpace,Bujalski2018Consensus}.
A fundamental question in opinion
formation is how information is shared and reinforced through social
interactions, and how the structure of these interactions shapes collective
outcomes \cite{LiScaglioneSwamiZhao2013,Peralta2025OpinionDynamics}.

A wide range of mathematical frameworks has been developed to describe
opinion and information dynamics across different modeling scales.
At the microscopic level, agent-based models prescribe interaction rules
for individual opinion updates. Representative examples include the
DeGroot consensus model \cite{DeGroot1974}, the Friedkin-Johnsen model
\cite{FriedkinJohnsen1990}, and bounded-confidence models such as the
Hegselmann-Krause (HK) and Deffuant-Weisbuch (DW) models
\cite{Deffuant2000,HegselmannKrause2002,Weisbuch2002}. Related
discrete-opinion models, including voter-type models
\cite{CliffordSudbury1973} and Sznajd dynamics \cite{Sznajd2000}, have
also been studied in the context of consensus formation and multi-cluster
dynamics. These models capture consensus and clustered opinion
configurations, with extensive developments in convergence, stability,
state-dependent connectivity, and generalized bounded-confidence
dynamics \cite{vanAlebeekCator2025,Blondel2009,ProskurnikovTempo2017}.
Recent extensions of agent-based opinion models have incorporated
spatial structure and heterogeneous interaction rules, showing that
spatial effects can lead to localized opinion clusters and spatially
separated opinion groups
\cite{BaumgaertnerTysonKrone2016,BaumgaertnerFetrosKroneTyson2018,PasimeniWadeAlkemade2025}.
Mobile-agent systems have also been shown to generate metapopulations
and echo chambers \cite{StarniniFrascaBaronchelli2016}.
At the mesoscopic scale, kinetic descriptions provide a statistical framework for opinion distributions. Starting from binary interaction rules, one obtains Boltzmann-type equations and their Fokker-Planck
limits \cite{During2009,PareschiToscani2013,Toscani2006}, thereby
bridging microscopic interaction laws and macroscopic descriptions.
Spatially inhomogeneous kinetic models have also incorporated spatial
position into the opinion distribution, with applications to the
spatial clustering of like-minded groups motivated by ``The Big Sort''
\cite{DuringWolfram2015OpinionDynamics}.
At the macroscopic level, continuum PDE models provide a useful framework for describing the spatiotemporal redistribution of activity densities and aggregation phenomena in collective systems
\cite{BellomoLiaoQuainiRussoSiettos2023}. 
Nonlocal transport and aggregation equations have been widely used to study concentration and self-organization
\cite{CarrilloFornasierToscani2010,CarrilloMcCannVillani2003}.
Related reaction-diffusion and cross-diffusion models have also been used to describe hotspot formation in crime models
\cite{ShortDOsognaPasourTitaBrantinghamBertozziChayes2008,ShortBertozziBrantingham2010}, as well as aggregation phenomena in
biological systems \cite{TopazBertozziLewis2006}. Such macroscopic
approaches are particularly suitable when the state variable represents a spatially distributed level of activity rather than a single scalar opinion carried by each agent.

Despite these developments, many existing models primarily focus on conformity-driven alignment.  In modern information environments, however, regions with stronger opinion activity may receive more attention. Such spatial heterogeneity suggests that collective opinions are shaped
not only by interactions within a local community, but also by external
influences, or ``outside voices'', including media exposure,
geographically distant social ties, and other forms of information
exchange across regions \cite{Bujalski2018Consensus,Peralta2025OpinionDynamics}.The resulting feedback can reinforce local activity and feed back into the spatial distribution of opinion density.
It is therefore important to
understand how it affects stability, clustering, and the
transition from spatially homogeneous states to non-consensus regimes.

%Despite these developments, the dynamic feedback between opinion activity
%and the surrounding information environment remains less understood in
%spatially extended models. When attention is included, it is often treated as a static or externally
%prescribed factor, rather than as an evolving field coupled to opinion
%dynamics. and alter the balance between nonlocal alignment and
%diffusion. 

Motivated by these observations, we develop a spatial opinion dynamics
model that captures the interaction between local opinion activity and
the information environment. The model is built upon two key
mechanisms. The first is \textbf{social conformity}, through which opinion activity tends to align with that in a surrounding interaction neighborhood,
promoting local agreement and, under suitable conditions, large-scale
consensus \cite{BellomoHaKimLiao2026,Deffuant2000,DeGroot1974,FriedkinJohnsen1990,HegselmannKrause2002}.
Depending on the interaction structure, such dynamics may also give rise
to multiple coexisting opinion groups and clustered patterns
\cite{BenNaim2005,CastellanoFortunatoLoreto2009,Lorenz2007,Weisbuch2002}. 
The second is \textbf{attention-mediated information exposure}, through which regions with
stronger opinion activity attract more attention and thereby reinforce
the spatial concentration of opinion activity, leading to an uneven
distribution of attention and influence across space
\cite{AlbiPareschiZanella2014,BakshyMessingAdamic2015,CraneSornette2008,WuHuberman2007}.
The interplay between these two mechanisms can generate spatially
localized opinion patterns and persistent non-consensus states, in which
several regions of high opinion activity coexist rather than merging into
a spatially homogeneous consensus.
In modern information environments, this feedback is closely related to
selective exposure and \textbf{echo chamber effects}, where repeated
exposure to reinforcing information can strengthen local agreement,
further concentrate opinion activity, and reduce effective mixing across different groups
\cite{BakshyMessingAdamic2015,CinelliMoralesGaleazziQuattrociocchiStarnini2021,HolmeNewman2006,RaoufiHamannRomanczuk2025,Vosoughi2018}.

%To capture the dynamic feedback between opinion activity and attention and to investigate its impact on clustering and non-consensus behavior, 
%we introduce and analyze a spatial opinion dynamics model with attention reinforcement, 
%designed to describe the emergence of non-consensus states induced by the interplay between social conformity and echo chamber effects. 

Starting from a agent-based model involving both jump mechanisms inspired by bounded confidence interactions and biased random walks \cite{Short2010Crime,ShortDOsognaPasourTitaBrantinghamBertozziChayes2008} induced by environments, we take the continuum limit and derive a macroscopic PDE system that takes the form of a nonlocal advection-cross-diffusion system with attention-dependent transport, 
referred to as the \emph{NODAR (Nonlocal Opinion Dynamics with Attention Reinforcement)} system. Specifically, the model couples a conserved opinion density $\rho(x,t)$,
which represents the spatial distribution of opinion activity or support
on a given topic, with an evolving attention field $S(x,t)$,
and reads
\begin{equation}\label{eq:intro_rhoS}
	\left\{
	\begin{aligned}
		\partial_t \rho
		&=-\kappa_{\mathrm{conf}}\nabla \cdot (\rho V[\rho])
		+ D_\rho \nabla \cdot \left(
		\nabla \rho
		-
		\frac{2\rho}{A}\nabla A
		\right), \\
		\partial_t S
		&=
		D_S \Delta S
		-
		\omega S
		+
		\theta \rho,
	\end{aligned}
	\right.
\end{equation}
where the effective attention field is given by
$A(x,t)=A^0(x)+S(x,t)$,
with $A^0(x)$ denoting a baseline attention level and $S(x,t)$ an activity-driven component.
The nonlocal alignment velocity is defined as
\begin{equation}\label{eq:intro_V}
	V[\rho](x,t)
	=
	\frac{\int_{\Omega} K_R(y-x)(y-x)\rho(y,t)\,dy}
	{\int_{\Omega} K_R(y-x)\rho(y,t)\,dy + \varepsilon}.
\end{equation}
Here $K_R$ is an interaction kernel with interaction radius $R>0$, and $\varepsilon>0$ is a regularization parameter. 
The coefficient $\kappa_{\mathrm{conf}}>0$ denotes the strength of consensus-driven interactions. 
The parameters $D_\rho$ and $D_S$ are the diffusion coefficients of the opinion density and the attention field, respectively, 
while $\omega>0$ represents the decay rate of attention and $\theta>0$ quantifies its reinforcement by local opinion activity. 
A complete list of variables and parameters is summarized in Table~\ref{tab:variables}.

The NODAR system captures the interplay between nonlocal alignment and attention-mediated feedback. 
Nonlocal interactions drive opinion alignment, while attention reinforcement induces a feedback loop in which regions of high opinion activity attract greater visibility and further enhance aggregation. The analysis shows that attention-mediated feedback enters the instability
threshold through an explicit mode-dependent correction factor \(\Gamma(k,D_S) < 1\) and might potentially enlarge the instability regime.
This mechanism promotes spatial clustering, reduces effective mixing, and may prevent convergence to a homogeneous consensus state, leading to persistent non-consensus behavior.
%To capture the dynamic feedback between opinion activity and attention and to investigate its impact on clustering and non-consensus behavior, 
%we introduce and analyze a spatial opinion dynamics model with attention reinforcement, 
%designed to describe the emergence of non-consensus states induced by the interplay between social conformity and echo chamber effects. 
Another related issue concerns the characteristic scale of the resulting
patterns. While bounded-confidence models predict cluster formation and, in some
discrete-agent settings, the classical 2R-type separation phenomenon
\cite{BlondelTsitsiklis2007,WangLiEChazelle2017}, 
its analogue in
continuum models remains less understood. Our numerical simulations reveal that both  the selected wavelength and cluster spacing are close to \(2R\) in the weak diffusion regime, providing some evidence on the 2R-conjecture in a general setting.

%{\cf
	%In the NODAR system, the linear dispersion relation provides a continuum
	%wavelength-selection mechanism. In the weak-diffusion,
	%aggregation-dominated regime, the selected wavelength is close to the
	%\(2R\) scale. Our numerical simulations via a structure-preserving IMEX spectral method reveal that the late-time cluster spacing remains
	%proportional to \(R\) and is close to the \(2R\) scale. These results
	%are consistent with the \(2R\) scale associated with
	%bounded-confidence particle models.
	%}
%Our numerical simulations via a structure-preserving
%IMEX spectral method reveal that the PDE system might also admit an intrinsic length scale for pattern
%formation,  depending on the interaction radius in the
%presence of attention-mediated feedback.

%In particular, it remains
%unclear whether such systems admit an intrinsic length scale for pattern
%formation and how this scale depends on the interaction radius in the
%presence of attention-mediated feedback.
%its analogue in spatially extended
%continuum models is still not well understood. In particular, it remains
%unclear whether such systems admit an intrinsic length scale for pattern
%formation and how this scale depends on the interaction radius in the
%presence of attention-mediated feedback.

To summarize, the main contributions of this work are given as follows:
\begin{itemize}
	
	\item[(1)] 
	We propose a spatial opinion dynamics model with attention reinforcement and derive its continuum formulation as a nonlocal advection-cross-diffusion system. 
	The model incorporates the feedback between opinion activity and collective attention, providing a framework for describing the interaction between nonlocal alignment and environmental feedback.
	
	\item[(2)] 
	We establish the threshold conditions for the onset of aggregation.
	%	The stability analysis shows that attention-mediated feedback modifies the instability threshold
	%	through an explicit mode-dependent correction factor \(\Gamma(k,D_S)\). When
	%	\(0<\Gamma(k,D_S)<1\), 
	The attention-mediated feedback enlarges the instability regime and promotes clustering, and induces a transition from spatially homogeneous states
	to non-consensus regimes.
	Numerical simulations further provide some evidence on the 2R-conjecture in the PDE level.

	%	\item[(2)] 
	%	The resulting system is analyzed to establish threshold conditions for the onset of aggregation.
	%	The stability analysis shows that attention-mediated feedback modifies the instability threshold,
	%	promotes clustering, and induces a transition from spatially homogeneous states
	%	to non-consensus regimes. Furthermore, numerical simulations provide some evidence on the 2R-conjecture in the PDE level.

	%	{\cf In addition, the dispersion relation
		%		identifies a characteristic wavelength, and numerical simulations show
		%		that both the selected wavelength and the nonlinear cluster spacing scale
		%		linearly with the interaction radius, approaching the \(2R\) scale in the
		%		weak-diffusion aggregation regime.}
	%	In addition, a characteristic pattern scale is identified, with an approximately
	%	linear dependence on the interaction radius that is consistent with numerical observations.
	
\end{itemize}

%\iffalse
%The NODAR system captures the interplay between nonlocal alignment and
%attention-mediated feedback.
%Social conformity is modeled by nonlocal alignment, while echo chamber
%effects arise from the reinforcement of attention and the induced
%feedback between opinion activity and collective attention.
%This feedback mechanism promotes the formation of spatially localized
%high-density regions, reduces effective mixing, and may prevent the
%system from reaching a spatially homogeneous consensus state.
%Within this framework, fragmentation is characterized not merely by the
%coexistence of different opinions, but by persistent spatial
%heterogeneity and clustered opinion activity.
%We investigate how the interaction between nonlocal alignment,
%diffusion, and attention reinforcement leads to aggregation phenomena
%and fragmented pattern formation.
%The threshold conditions for the onset of aggregation, which also mark the
%loss of global consensus, are given via linear stability analysis of the spatially homogeneous state.
%The analysis shows that attention reinforcement can lower the instability
%threshold and accelerate the emergence of clustering.
%Numerical simulations further confirm these findings and demonstrate
%that attention feedback leads to more localized and concentrated
%structures, where opinion activity is confined to a small number of
%high-density regions.
%Furthermore, the dynamics exhibit a characteristic pattern scale
%that depends on the interaction radius, suggesting a possible
%continuum analogue of the classical $2R$ conjecture.
%\fi

The rest is organized as follows.
Section~\ref{sec:modeling} introduces the modeling framework and
derives the macroscopic PDE system.
Section~\ref{sec:phase_stability} analyzes phase transition via the linear
stability, and derives threshold conditions for aggregation.
Section~\ref{sec:simulation} presents the numerical scheme and
simulation results, including phase diagrams and clustering behavior.
Finally, conclusions are given in Section~\ref{sec:conclusion}.

\section{Modeling} \label{sec:modeling}

%We begin by introducing the notion of topic visibility, 

For simplicity, we consider a
two-dimensional periodic spatial domain
\(\Omega=\mathbb T^2\). The domain is discretized by a uniform square
lattice with mesh size \(\ell>0\). We denote the grid nodes by
\(x_s=x_{i,j}\), where \(s=(i,j)\) and \(i,j=0,\ldots,N-1\).
%Time is discretized as $t_k = k\Delta t$,
%$k \in \mathbb{N}$.
The state variables and model parameters are summarized in
Table~\ref{tab:variables}.

\begin{table}[h!]
	\centering
	\caption{State variables and parameters of the agent-based model.}
	\label{tab:variables}
	\renewcommand{\arraystretch}{1.20}
	\begin{tabular}{lp{0.7\textwidth}}
		\hline\hline
		Symbol & Description \\
		\hline
		
		$n_{i,j}(t)$  & Opinion level at node $(i,j)$ at time $t$ \\
		
		$\rho_{i,j}(t) = n_{i,j}/\ell^2$
		& Opinion density at node $(i,j)$ at time $t$ \\
		
		$S_{i,j}(t)$
		& Activity-driven attention  at node $(i,j)$ at time $t$ \\
		
		$A^0_{i,j}$
		& Static baseline attention at node $(i,j)$ \\
		
		$A_{i,j}(t) $
		& Effective attention at node $(i,j)$ at time $t$ \\
		
		\hline
		$R>0$ & Interaction range of nonlocal conformity\\
		
		$\kappa_{\mathrm{conf}}>0$ & Strength of nonlocal conformity alignment\\
		
		$D_\rho>0$
		& Diffusion coefficient of the opinion density \\
		
		$D_S>0$
		& Diffusion coefficient of the dynamic attention field \\
		
		$\omega>0$
		& Decay rate of the dynamic attention field \\
		
		$\theta>0$
		& Reinforcement strength of attention by opinion activity \\
		\hline\hline
	\end{tabular}
\end{table}

\subsection{State variables}
\label{subsec:domain-variables}

%In this section, we derive the NODAR system \eqref{eq:intro_rhoS}. 

Topic visibility plays a central role in the formation and spatial redistribution of public opinion. 
Even in the absence of ongoing activity, a topic may retain a nonzero level of exposure due to persistent user interest, platform recommendation mechanisms, or structural heterogeneity. 
This observation motivates a decomposition of the effective visibility into a baseline component and an activity-driven component, which together define the effective attention level variable at $(i, j)$.
\begin{equation}\label{eq:A-decomposition}
	A_{i, j}(t) = A^0_{i, j} + S_{i, j}(t),
\end{equation}
where $A^0_{i, j}$ denotes a time-independent baseline visibility that may
vary spatially, and $S_{i, j}(t)$ represents the endogenous attention
generated by the evolving opinion activity. The attention field $S$ is
reinforced in regions of high opinion density, spreads through local
mixing, and decays over time. For technical convenience, it is assumed
that $A^0_{i, j}$ is bounded away from zero so that the effective attention
$A_{i, j}(t)$ remains strictly positive
\cite{ShortBertozziBrantingham2010,Short2010Crime,ShortDOsognaPasourTitaBrantinghamBertozziChayes2008}.

\begin{figure}[h!]
	\centering
	\includegraphics[width=0.6\textwidth]{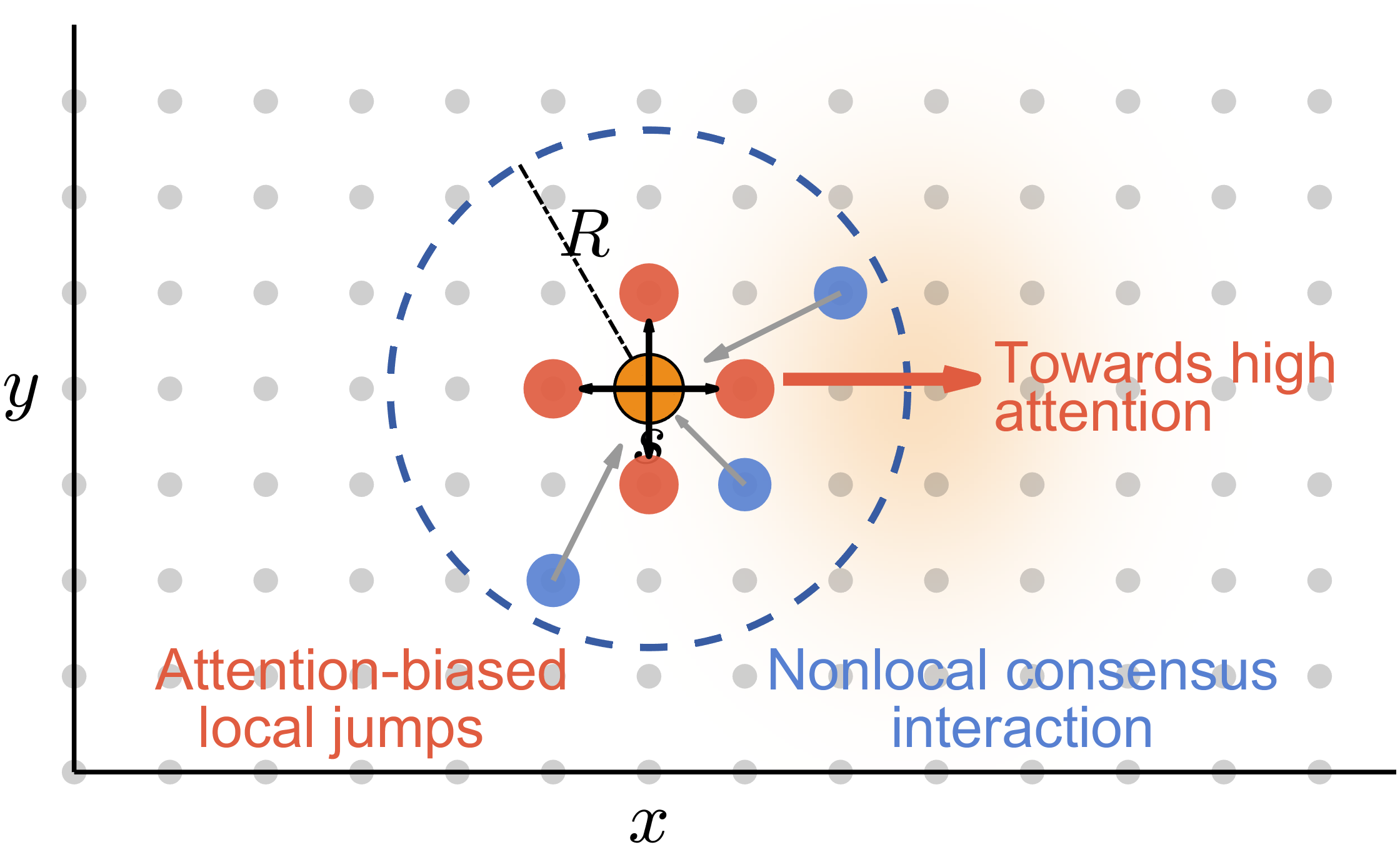}
	\caption{
		Schematic illustration of the microscopic mechanisms in the model.
		The opinion level at site $s$ evolves through two components:
		(i) nearest-neighbour jumps biased toward regions of higher attention (red nodes), and
		(ii) nonlocal conformity interactions within a radius $R$ (blue nodes).
	}
	\label{fig:schematic_model}
	\vspace{-10pt}
\end{figure}

To describe the underlying mechanisms at the microscopic level, we
consider a agent-based representation of the system, as illustrated
in Fig.~\ref{fig:schematic_model}. 
Each site $(i,j)$ of the two-dimensional lattice carries an
opinion-activity level $n_{i,j}$ associated with the topic and an
effective attention level $A_{i,j}$ \cite{BalsaBarreiro2022SocialSpace}. 
At the individual level, opinion interactions may lead to several basic responses: (1) keep, where the opinion
of a person remains unchanged; (2) adopt, where the subject copies a reference opinion; (3) compromise, where the individual approaches the reference opinion \cite{ChacomaZanette2015,BalsaBarreiro2022SocialSpace}.
%	In the present lattice model, these microscopic responses are not
%	distinguished separately; instead, their collective effect is incorporated
%	through the redistribution of the opinion-activity level $n_{i,j}$.	

The redistribution of the opinion-activity level $n_{i,j}$ is driven by two mechanisms:  Biased random walk 
toward the nearest neighborhoods and nonlocal conformity interactions within a finite interaction radius $R$.
%The opinion level $n_{i,j}$ at site $(i,j)$ represents the aggregate
%level of support or opinion activity associated with a given topic at that location
Based on this microscopic picture, the agent-based model is constructed
in three steps:
\begin{enumerate}
	\item[(1)] The physical or platform space is represented as a regular
	two-dimensional lattice, where each cell carries an opinion level and
	an attention level.
	
	\item[(2)] The redistribution of opinion level between lattice sites is
	modeled by a continuous-time jump process 
	%with a conservative master equation,
	incorporating both attention bias and nonlocal conformity alignment.
	
	\item[(3)] A dynamic attention field is introduced on the lattice, whose
	evolution includes local averaging, temporal decay, and feedback from the
	opinion level.
	
\end{enumerate}

A continuum limit is derived by a transport approximation for the nonlocal interactions, leading to a coupled nonlocal advection-cross-diffusion PDE system.

\subsection{agent-based microscopic modeling}
%\label{subsec:rho-derivation}

The redistribution of opinion level between lattice sites is modeled as a
continuous-time jump process with two distinct mechanisms, corresponding
to nonlocal conformity and attention-driven local motion. During a short
time interval, the opinion activity located at node $(i,j)$ may jump either to a
nearest-neighbour site or to another site within a wider interaction
neighborhood.

At the microscopic level, the master equation of the opinion level $n_{i, j}$ reads
\begin{equation}\label{eq:master-n-general}
	\frac{d n_{i,j}}{dt}
	=
	\sum_{\br}
	\Big(
	\lambda_{\br \to(i,j)}(t)\,n_{\br}(t)
	-
	\lambda_{(i,j)\to \br}(t)\,n_{i,j}(t)
	\Big),
\end{equation}
where the sum runs over all sites $\br$ that can be reached from $(i,j)$
in a single jump, and $\lambda_{(i,j)\to \br}(t)\ge0$ denotes the
corresponding jump intensity. Since the right-hand side of
\eqref{eq:master-n-general} is written in terms of pairwise gain-loss
fluxes, the total opinion level
$N(t):=\sum_{i,j} n_{i,j}(t)$
is conserved for all $t$.

We consider two types of jump mechanisms, providing a microscopic
realization of the two key driving effects: Social
conformity and attention-mediated feedback arising from the echo
chamber effect.
\begin{enumerate}
	\item[(1)] Conformity-driven nonlocal jumps within an interaction
	neighborhood $\mathcal{N}_R(i,j)$ of radius $R$, with intensities
	$\lambda^{\mathrm{conf}}_{(i,j)\to \br}(t)$;
	
	\item[(2)] Attention-driven local jumps to
	$\br\sim(i,j)$, where $\br\sim(i,j)$ denotes the four nearest
	neighbors of $(i,j)$, reflecting attention-induced local mobility,
	with intensities $\lambda^{\mathrm{att}}_{(i,j)\to \br}(t)$.
\end{enumerate}

Accordingly, the total jump intensity is decomposed as
\[
\lambda_{(i,j)\to \br}(t)
=
\lambda^{\mathrm{conf}}_{(i,j)\to \br}(t)
+
\lambda^{\mathrm{att}}_{(i,j)\to \br}(t),
\]
and the microscopic dynamics becomes
\begin{equation}\label{eq:master-n-unified}
	\begin{aligned}
		\frac{d n_{i,j}}{dt}
		&=
		\sum_{\br\in\mathcal{N}_R(i,j)}
		\Big(
		\lambda^{\mathrm{conf}}_{\br \to(i,j)}(t)\,n_{\br}(t)
		-
		\lambda^{\mathrm{conf}}_{(i,j)\to \br}(t)\,n_{i,j}(t)
		\Big)
		\\
		&\quad
		+
		\sum_{\br\sim(i,j)}
		\Big(
		\lambda^{\mathrm{att}}_{\br \to(i,j)}(t)\,n_{\br}(t)
		-
		\lambda^{\mathrm{att}}_{(i,j)\to \br}(t)\,n_{i,j}(t)
		\Big).
	\end{aligned}
\end{equation}

\subsubsection{Conformity-driven nonlocal jumps}
%We first consider the contribution of nonlocal conformity-driven jumps, 
%\begin{equation}\label{eq:J-discrete}
%	\mathcal J^{\ell}_{i,j}
%	:=
%	\frac{1}{\ell^2}
%	\sum_{(p,q)\in\mathcal N_R(i,j)}
%	\Big(
%	\lambda^{\mathrm{conf}}_{(p,q)\to(i,j)}\,n_{p,q}
%	-
%	\lambda^{\mathrm{conf}}_{(i,j)\to(p,q)}\,n_{i,j}
%	\Big),
%\end{equation}
%where, by symmetry of the interaction neighborhood, both sums are taken over the same set $\mathcal N_R(i,j)$.
Motivated by bounded-confidence interactions in opinion dynamics
\cite{Deffuant2000,HegselmannKrause2002}, we introduce conformity-driven nonlocal jumps on the lattice.
The associated jump intensities are specified by decomposing them
into an overall interaction frequency and a regularized transition
weight:
\begin{equation}\label{eq:lambda-HK}
	\lambda^{\mathrm{conf}}_{(i,j)\to(p,q)}(t)
	=
	\kappa_{\mathrm{conf}}\,
	\pi_{(i,j)\to(p,q)}(t),
\end{equation}
where $\kappa_{\mathrm{conf}}>0$ denotes the overall jump rate, and
\begin{equation}\label{eq:pi-HK}
	\pi_{(i,j)\to(p,q)}(t)
	=
	\frac{
		K_R(x_{p,q}-x_{i,j})\,\rho_{p,q}(t)\,\ell^2}
	{\displaystyle
		\sum_{(m,n)\in\mathcal{N}_R(i,j)}
		K_R(x_{m,n}-x_{i,j})\,\rho_{m,n}(t)\,\ell^2+\varepsilon}
\end{equation}
defines a regularized jump preference from $(i,j)$ to $(p,q)$.
Here $K_R$ is an isotropic interaction kernel supported in $\|z\|\le R$,
and $\varepsilon>0$ is a small regularization parameter preventing
degeneracy of the normalization. Due to the presence of $\varepsilon$,
the weights \eqref{eq:pi-HK} are not required to sum to one exactly.

\subsubsection{Attention-driven biased random walk}
In the spirit of the attractiveness-based biased random-walk mechanism used in crime-hotspot models \cite{ShortDOsognaPasourTitaBrantinghamBertozziChayes2008}, we model the attention field as an environmental bias that directs local jumps toward neighboring sites with higher attention.
For the attention-driven biased random walk, we assume that each unit of opinion level
located at site $\bm{s}=(i,j)$ undergoes jump attempts according to an
independent Poisson process with rate $\alpha_S>0$.
Over a short time interval $[t,t+\Delta t)$, the probability of a jump
event at site $\bm{s}$ is
\begin{equation*}
	p_{\bm{s}}(t)
	=
	1-e^{-\alpha_S\Delta t}
	=
	\alpha_S\Delta t + \mathcal{O}((\Delta t)^2),
	\quad (\Delta t \ll 1).
\end{equation*}
Conditioned on a jump event at site $\bm{s}$, the destination node $\bm{n}\sim \bm{s}$
is selected according to the probability distribution
\[
\mathbb{P}(\bm{s} \to \bm{n})
=
\frac{A_{\bm{n}}(t)}{T_{\bm{s}}(t)},
\quad
T_{\bm{s}}(t):=\sum_{\bm{n'}\sim \bm{s}} A_{\bm{n'}}(t).
\]
Therefore, for each nearest neighbour $\bm{n}\sim \bm{s}$, the transition probability
over the interval $[t,t+\Delta t)$ satisfies
\[
\mathbb{P}(X_{t+\Delta t}=\bm{n} \mid X_t=\bm{s}) = p_{\bm{s}}(t)\mathbb{P}(\bm{s} \to \bm{n})
=
\alpha_S\,\frac{A_{\bm{n}}(t)}{T_{\bm{s}}(t)}\,\Delta t
+ \mathcal{O}((\Delta t)^2).
\]

Accordingly, the corresponding attention-driven jump intensities are
\begin{equation*}
	\lambda^{\mathrm{att}}_{\bm{s}\to \bm{n}}(t)
	=
	\alpha_S\,\frac{A_{\bm{n}}(t)}{T_{\bm{s}}(t)},
	\quad \bm{n}\sim \bm{s}.
\end{equation*}

\subsubsection{Spread, decay and reinforcement of the attention level}

Unlike the stochastic transport of opinion level described above,
the attention level is modeled phenomenologically through a local
update rule on the lattice.
This rule captures spatial averaging of attention, temporal decay, and
reinforcement by local opinion density.
For a time step $\Delta t$, we prescribe
\begin{equation}\label{eq:S-lattice}
	S_{i,j}^{k+1}
	=
	\Big[(1-\eta) S_{i,j}^k
	+ \frac{\eta}{4}
	\sum_{(p,q)\sim(i,j)} S_{p,q}^k\Big]
	(1-\omega\Delta t)
	+
	\theta\rho_{i,j}^k\Delta t,
\end{equation}
where $\eta\in[0,1]$ controls the strength of spatial mixing.

We introduce the discrete Laplacian
\[
\Delta_{\mathrm{disc}} S_{i,j}^k
=
\frac{1}{\ell^2}
\Big(
\sum_{(p,q)\sim(i,j)} S_{p,q}^k
-
4S_{i,j}^k
\Big),
\]
so that \eqref{eq:S-lattice} can be written as
\begin{equation}\label{eq:S-agent-based}
	S_{i,j}^{k+1}
	=
	\left(
	S_{i,j}^k
	+
	\frac{\eta\ell^2}{4}
	\Delta_{\mathrm{disc}}S_{i,j}^k
	\right)
	(1-\omega\Delta t)
	+
	\theta\rho_{i,j}^k\Delta t.
\end{equation}

\subsection{Derivation of the macroscopic PDE system}
\label{subsec:rho-derivation}

%Now we assume that $\rho$ and $S$ vary smoothly on spatial and temporal
%scales larger than $\ell$ and $\Delta t$, 

Now we introduce the density variable
$\rho_{i,j}(t)=\frac{n_{i,j}(t)}{\ell^2}$ and rewrite the update in
incremental form, divide by $\Delta t$, and pass to the limit
$\ell,\Delta t\to 0$ with $D_S=\frac{\eta\ell^2}{4\Delta t}$
fixed. Since the mixed term involving both spatial averaging and temporal
decay is of higher order and vanishes in the continuum limit, Eq.~\eqref{eq:S-agent-based} becomes
\begin{equation}\label{eq:S-PDE}
	\partial_t S
	=
	D_S\Delta S
	-
	\omega S
	+
	\theta\rho,
\end{equation}
where $\rho$ and $S$ are continuum limit of $\rho_{i, j}$ and $S_{i, j}$, respectively. It remains to identify the continuum limit of the two jump
mechanisms. 

\subsubsection{Conformity-driven nonlocal advection}

Define the operator
$\mathcal J^\ell_{i,j}$ as a discrete redistribution of mass
induced by jumps of displacement $x_{p,q}-x_{i,j}$. 
\begin{equation}\label{eq:J-discrete}
	\mathcal J^{\ell}_{i,j}
	:=
	\frac{1}{\ell^2}
	\sum_{(p,q)\in\mathcal N_R(i,j)}
	\Big(
	\lambda^{\mathrm{conf}}_{(p,q)\to(i,j)}\,n_{p,q}
	-
	\lambda^{\mathrm{conf}}_{(i,j)\to(p,q)}\,n_{i,j}
	\Big),
\end{equation}
where, by symmetry of the interaction neighborhood, both sums are taken over the same set $\mathcal N_R(i,j)$.

%Although the conformity-driven and attention-driven parts are
%treated differently at the discrete level, both contribute to a
%conservative macroscopic evolution law for the density.

%Using the relation $n_{i,j}\sim \rho(x_{i,j},t)\ell^2$, 

To derive the continuum limit, we start from the weak form of Eq.~\eqref{eq:J-discrete}. For a smooth test function $\varphi$, it yields
\begin{equation*}
	\sum_{i,j}\mathcal J^\ell_{i,j}\,\varphi_{i,j}\,\ell^2
	=
	\kappa_{\mathrm{conf}}
	\sum_{i,j}
	\rho_{i,j}
	\sum_{(p,q)\in\mathcal N_R(i,j)}
	\pi_{(i,j)\to(p,q)}
	\bigl(\varphi_{p,q}-\varphi_{i,j}\bigr)\ell^2.
\end{equation*}
Expanding $\varphi_{p,q}$ around $x_{i,j}$ gives
\begin{equation}\label{expansion}
	\varphi_{p,q}-\varphi_{i,j}
	=
	(x_{p,q}-x_{i,j})\cdot \nabla\varphi(x_{i,j})
	+
	\frac12
	(x_{p,q}-x_{i,j})^\top
	D^2\varphi(x_{i,j})
	(x_{p,q}-x_{i,j})
	+\cdots.
\end{equation}

Retaining only the first term in Eq.~\eqref{expansion} (i.e., under a first-moment hydrodynamic closure) leads to the discrete mean
displacement
\begin{equation}\label{eq:intro_V-node}
	V_{i,j}
	=
	\sum_{(p,q)\in\mathcal{N}_R(i,j)}
	(x_{p,q}-x_{i,j})\,\pi_{(i,j)\to(p,q)},
\end{equation}
which characterizes the directional bias induced by the nonlocal jump
preference. Substituting \eqref{eq:pi-HK} into \eqref{eq:intro_V-node}, we
obtain
\begin{equation*}
	V_{i,j}
	=
	\frac{\displaystyle
		\sum_{(p,q)\in\mathcal{N}_R(i,j)}
		K_R(x_{p,q}-x_{i,j})
		(x_{p,q}-x_{i,j})\,\rho_{p,q}\,\ell^2}
	{\displaystyle
		\sum_{(m,n)\in\mathcal{N}_R(i,j)}
		K_R(x_{m,n}-x_{i,j})\,\rho_{m,n}\,\ell^2+\varepsilon}.
\end{equation*}
and consequently,
\begin{equation*}
	\sum_{i,j}\mathcal J^\ell_{i,j}\,\varphi_{i,j}\,\ell^2
	=
	\kappa_{\mathrm{conf}}
	\sum_{i,j}
	\rho_{i,j}
	V_{i,j}\cdot \nabla\varphi(x_{i,j})\,\ell^2 + o(\ell^2).
\end{equation*}

By taking the formal limit $\ell \to 0$, we arrive at
\[
\sum_{i,j}\mathcal J^\ell_{i,j}\,\varphi_{i,j}\,\ell^2 \to \int_\Omega
\kappa_{\mathrm{conf}}\rho\,V[\rho]\cdot \nabla\varphi\,\mathrm dx = \int_\Omega \mathcal J[\rho]\,\varphi\,\mathrm dx, 
\]
where the nonlocal velocity
field $V[\rho]$ is defined in Eq.~\eqref{eq:intro_V}. In the strong form, it arrives at
\begin{equation}\label{eq:rho-adv-part}
	\mathcal J[\rho]
	=
	-\kappa_{\mathrm{conf}}\nabla\cdot\bigl(\rho V[\rho]\bigr).
\end{equation}

%Under suitable regularity assumptions on $\rho$ and $K_R$, these Riemann
%sums formally converge, as $\ell\to0$, to the nonlocal velocity
%field $V[\rho]$ in Eq.~\eqref{eq:intro_V}.
%\begin{equation}\label{eq:intro_V-continuum}
%	V[\rho](x,t)
%	=
%	\frac{\displaystyle
	%		\int_{\Omega} K_R(y-x)\,(y-x)\,\rho(y,t)\,\mathrm{d}y}
%	{\displaystyle
	%		\int_{\Omega} K_R(y-x)\,\rho(y,t)\,\mathrm{d}y+\varepsilon}.
%\end{equation}

%Now we approximate the operator by retaining only its first moment, thereby capturing its drift contribution

%which, in the continuum description, gives
%\begin{equation*}
%	\int_\Omega \mathcal J[\rho]\,\varphi\,\mathrm dx
%	\approx
%	\int_\Omega
%	\kappa_{\mathrm{conf}}\rho\,V[\rho]\cdot \nabla\varphi\,\mathrm dx.
%\end{equation*}
%This corresponds, in weak form, to the drift approximation
%\begin{equation}\label{eq:rho-adv-part}
%	\mathcal J[\rho]
%	\approx
%	-\kappa_{\mathrm{conf}}\nabla\cdot\bigl(\rho V[\rho]\bigr).
%\end{equation}

\subsubsection{Attention-driven chemotactic movement}

We now derive the chemotactic movement induced by the attention-driven
biased random walk. Since these jumps are restricted to the nearest
neighbors, the corresponding contribution can be written in terms of
discrete interface fluxes. The attention-driven flux across the
interface $(i+\frac12,j)$ is
\[
J^{x,\mathrm{att}}_{i+\frac12,j}
=
\alpha_S
\left(
\frac{A_{i+1,j}}{T_{i,j}}\,n_{i,j}
-
\frac{A_{i,j}}{T_{i+1,j}}\,n_{i+1,j}
\right),
\]
with an analogous expression for
$J^{y,\mathrm{att}}_{i,j+\frac12}$.

By introducing the density variable
$\rho_{i,j}:=\frac{n_{i,j}}{\ell^2}$, the $x$-flux can be written as
\[
J^{x,\mathrm{att}}_{i+\frac12,j}
=
\alpha_S\ell^2
\left(
\frac{A_{i+1,j}}{T_{i,j}}\rho_{i,j}
-
\frac{A_{i,j}}{T_{i+1,j}}\rho_{i+1,j}
\right),
\]
and similarly in the $y$ direction.

Define the discrete attention-driven operator by
\[
\mathcal D^\ell_{i,j}
:=
\frac{1}{\ell^2}
\left(
J^{x,\mathrm{att}}_{i-\frac12,j}
-
J^{x,\mathrm{att}}_{i+\frac12,j}
+
J^{y,\mathrm{att}}_{i,j-\frac12}
-
J^{y,\mathrm{att}}_{i,j+\frac12}
\right).
\]
Thus $\mathcal D^\ell_{i,j}$ represents the attention-driven contribution
to $\partial_t\rho_{i,j}$.

To pass to the continuum limit, we use a weak formulation. Let
$\varphi\in C^\infty_{\rm per}(\Omega)$ be a smooth periodic test
function and set $\varphi_{i,j}=\varphi(x_{i,j})$. By periodicity and
discrete summation by parts, we obtain
\[
\sum_{i,j}
\mathcal D^\ell_{i,j}\varphi_{i,j}\ell^2
=
\sum_{i,j}
J^{x,\mathrm{att}}_{i+\frac12,j}
\left(\varphi_{i+1,j}-\varphi_{i,j}\right)
+
\sum_{i,j}
J^{y,\mathrm{att}}_{i,j+\frac12}
\left(\varphi_{i,j+1}-\varphi_{i,j}\right).
\]
Substituting the fluxes gives
\[
\begin{aligned}
	\sum_{i,j}
	\mathcal D^\ell_{i,j}\varphi_{i,j}\ell^2
	&=
	\frac{\alpha_S\ell^2}{4}
	\sum_{i,j}
	\frac{4}{\ell}
	\left(
	\frac{A_{i+1,j}}{T_{i,j}}\rho_{i,j}
	-
	\frac{A_{i,j}}{T_{i+1,j}}\rho_{i+1,j}
	\right)
	\frac{\varphi_{i+1,j}-\varphi_{i,j}}{\ell}
	\ell^2
	\\
	&\quad+
	\frac{\alpha_S\ell^2}{4}
	\sum_{i,j}
	\frac{4}{\ell}
	\left(
	\frac{A_{i,j+1}}{T_{i,j}}\rho_{i,j}
	-
	\frac{A_{i,j}}{T_{i,j+1}}\rho_{i,j+1}
	\right)
	\frac{\varphi_{i,j+1}-\varphi_{i,j}}{\ell}
	\ell^2 .
\end{aligned}
\]

We impose the diffusive scaling for the nearest-neighbour jump rate:
\[
D_\rho^\ell
:=
\frac{\alpha_S\ell^2}{4}
\to
D_\rho>0,
\quad \ell\to0.
\]
Assuming that $\rho_{i,j}$ and $A_{i,j}$ have smooth continuum limits
$\rho$ and $A$ with $A(x,t)\ge a^\ast>0$, the weak formulation gives
\[
\sum_{i,j}
\mathcal D^\ell_{i,j}\varphi_{i,j}\ell^2
\to
-
D_\rho
\int_\Omega
\left(
\nabla\rho
-
\frac{2\rho}{A}\nabla A
\right)
\cdot
\nabla\varphi\,dx ,
\quad \ell\to0.
\]
and
\begin{equation}\label{eq:rho-diff-part}
	-D_\rho
	\int_\Omega
	\left(
	\nabla\rho
	-
	\frac{2\rho}{A}\nabla A
	\right)
	\cdot
	\nabla\varphi\,dx
	=
	D_\rho
	\int_\Omega
	\nabla\cdot
	\left(
	\nabla\rho
	-
	\frac{2\rho}{A}\nabla A
	\right) \varphi \,dx.
\end{equation}

%Hence the limiting attention-driven operator $\mathcal D[\rho,A]$ is
%characterized weakly by
%\[
%\int_\Omega
%\mathcal D[\rho,A]\varphi\,dx
%=
%-
%D_\rho
%\int_\Omega
%\left(
%\nabla\rho
%-
%\frac{2\rho}{A}\nabla A
%\right)
%\cdot
%\nabla\varphi\,dx .
%\]
%Equivalently,
%\begin{equation}\label{eq:rho-diff-part}
%	\mathcal D[\rho,A]
%	=
%	D_\rho\nabla\cdot
%	\left(
%	\nabla\rho
%	-
%	\frac{2\rho}{A}\nabla A
%	\right).
%\end{equation}

When the effective attention is bounded away from zero, 
\[
A(x,t)=A^0(x)+S(x,t),
\quad
A(x,t)\ge a^\ast>0,
\]
we may equivalently write $\nabla A/A=\nabla\ln A$.
The first term in \eqref{eq:rho-diff-part} represents unbiased diffusion
of the opinion density, while the second describes drift toward
regions of higher attention.

Combining the transport approximation \eqref{eq:rho-adv-part} with the
attention-driven contribution \eqref{eq:rho-diff-part}, we obtain
\[
\partial_t\rho
=
-\kappa_{\mathrm{conf}}\nabla\cdot(\rho V[\rho])
+
D_\rho\nabla\cdot
\left(
\nabla\rho
-
\frac{2\rho}{A}\nabla A
\right),
\]
which is the macroscopic equation for the opinion density in
Eq.~\eqref{eq:intro_rhoS}.

%following
%macroscopic equation for the opinion density:
%\begin{equation}\label{eq:macro-rho}
%	\partial_t\rho
%	=
%	-\kappa_{\mathrm{conf}}\nabla\!\cdot\!\bigl(\rho V[\rho]\bigr)
%	+
%	D_\rho\,\nabla\!\cdot\!
%	\left(
%	\nabla\rho-\frac{2\rho}{A}\nabla A
%	\right),
%\end{equation}
%where the nonlocal velocity field $V[\rho]$
%is defined by \eqref{eq:intro_V-continuum}.

%Combining Eqs.~\eqref{eq:rho-adv-part} and \eqref{eq:rho-diff-part}, we finally obtain the full NODAR system~\eqref{eq:intro_rhoS}.

%\subsection{Macroscopic modeling of the attention field $S$}
%\label{subsec:attention-model}

\subsection{Scaling analysis of the NODAR system}
\label{subsec:notation-dim}

For analytical clarity, we perform a scaling analysis.
%Let $x \in \Omega \subset \mathbb{R}^2$ and $t \ge 0$ denote the
%space and time variables. 
%The opinion density $\rho(x,t)$ is normalized such that 
%$\int_{\Omega}\rho(x,t)\,dx = 1$, and thus has dimension $[\rho]=L^{-2}$, 
The opinion density $\rho(x,t)$ satisfies $\int_{\Omega}\rho(x,t)\,dx = 1$, and thus has scaling $[\rho]=L^{-2}$,  while the attention variables $A$ and $S$ are dimensionless.

The macroscopic parameters in \eqref{eq:intro_rhoS} satisfy
\[
[D_\rho]=[D_S]=L^2T^{-1},\quad
[\kappa_{\mathrm{conf}}]=T^{-1},\quad
[\omega]=T^{-1},\quad
[\theta]=L^2T^{-1}.
\]
From the lattice derivation, the transport coefficients obey the
diffusive scalings
$D_S = \eta\ell^2/(4\Delta t)$ and
$D_\rho = (\alpha_S\Delta t) \ell^2/(4\Delta t)$.

We introduce the characteristic time scale
$T_0 = \kappa_{\mathrm{conf}}^{-1}$  and define
\[
\tilde{x} = \frac{x}{L}, \quad
\tilde{t} = \frac{t}{T_0}, \quad
\tilde{\rho} = L^2 \rho,
\]
together with the dimensionless parameters
\[
\tilde{\omega} = \frac{\omega}{\kappa_{\mathrm{conf}}}, \quad
\tilde{\theta} = \frac{\theta}{L^2 \kappa_{\mathrm{conf}}}, \quad
\tilde{D}_\rho = \frac{D_\rho}{L^2 \kappa_{\mathrm{conf}}}, \quad
\tilde{D}_S = \frac{D_S}{L^2 \kappa_{\mathrm{conf}}},\quad \tilde{R}=R/L.
\]
Substituting these scalings into the system and rescaling the domain to
$\Omega=[0,1]^2$, we obtain a dimensionless formulation. 
Hereafter we omit tildes for brevity.
%For simplicity, we omit the tildes hereafter.

The dimensionless NODAR system reads
\begin{subequations}
	\label{eq:HK-S-scaled}
	\begin{empheq}[left=\empheqlbrace]{align}
		\partial_t \rho &= -\nabla \cdot (\rho V[\rho]) + D_\rho \nabla \cdot \left( \nabla \rho - \frac{2\rho}{A} \nabla A \right), \label{eq:HK-S-scaled-rho} \\[6pt]
		\partial_t S &= D_S \Delta S - \omega S + \theta \rho. \label{eq:HK-S-scaled-S}
	\end{empheq}
\end{subequations}

%In the numerical experiments, we set $L=1$, so that
%$D_\rho$ and $D_S$ measure diffusion relative to the alignment time scale.

If the effective attention is spatially homogeneous, $\nabla A \equiv 0$, then Eq.~\eqref{eq:HK-S-scaled-rho} in the NODAR system reduces to
\begin{equation}\label{eq:HK_nonlocal}
	\partial_t \rho
	=
	-\nabla\cdot(\rho V[\rho])
	+
	D_\rho \Delta \rho,
\end{equation}
which extends the nonlinear nonlocal Fokker-Planck equation
for the noisy Hegselmann-Krause model
\cite{ChazelleJiuLiWang2017,WangLiEChazelle2017} to multidimensional cases.

\subsection{Basic properties}
\label{subsec:properties}

Let $\Omega=\mathbb T^2$ be the two-dimensional periodic domain.
The initial data satisfy
\[
\rho_0\in H^3(\Omega),\quad
S_0\in H^4(\Omega),
\quad
\rho_0\ge0,\quad S_0\ge0
\quad\text{a.e. in }\Omega .
\]
The static baseline attention satisfies
\[
A^0\in H^4(\Omega),
\quad
A^0(x)\ge a_0>0
\quad\text{a.e. in }\Omega .
\]
The nonlocal velocity \eqref{eq:intro_V} can be written as $V[\rho](x)=\frac{\mathcal N[\rho](x)}{\mathcal K[\rho](x)}$,
%\begin{equation}\label{eq:intro_V}
%	V[\rho](x)=\frac{\mathcal N[\rho](x)}{\mathcal K[\rho](x)},
%\end{equation}
where
\[
\mathcal N[\rho](x)
:=
\int_\Omega G_R(y-x)\rho(y)\,dy,
\qquad
\mathcal K[\rho](x)
:=
\int_\Omega K_R(y-x)\rho(y)\,dy+\varepsilon.
\]
Here \(\varepsilon>0\) is fixed and
$K_R(z)=\mathbf 1_{\{|z|\le R\}}$, $G_R(z)=zK_R(z)$,
so that
\[
K_R\in L^1(\Omega),\quad
G_R\in L^1(\Omega;\mathbb R^2),\quad
K_R\ge0 .
\]

We first establish coefficient
estimates for nonlinear drift terms $V[\rho]$ and $\nabla A/A$.
\begin{lemma}\label{lem:coeff-estimates}
	Let \(\Omega=\mathbb T^2\), and \(V\) be defined by \eqref{eq:intro_V}.
	In addition, let \(A^0\in H^4(\Omega)\), and suppose that
	\[
	A_i=A^0+S_i,\quad
	A_i\ge a_0>0,\quad
	S_i\in H^4(\Omega),\quad
	\|S_i\|_{H^4}\le M,\quad i=1,2.
	\]	
	Then for any \(u,v\in H^3(\Omega)\) satisfying $\|u\|_{H^3(\Omega)}\le M, ~\|v\|_{H^3(\Omega)}\le M,~ u \ge 0,~ v \ge 0$, there exists a constant \({C}>0\) such that
	\[
	\|V[u]\|_{H^3(\Omega;\mathbb R^2)}\le C, \quad
	\|V[u]-V[v]\|_{H^3(\Omega;\mathbb R^2)} 
	\le C\|u-v\|_{H^3(\Omega)},
	\]
	and
	\[
	\left\|\frac{\nabla A_i}{A_i}\right\|_{H^3}
	\le C,
	\quad
	\left\|
	\frac{\nabla A_1}{A_1}
	-
	\frac{\nabla A_2}{A_2}
	\right\|_{H^3}
	\le C\|S_1-S_2\|_{H^4}.
	\]
	Here \(C\) depends only on
	$M,\ a_0,\ \varepsilon, \ \|A^0\|_{H^4},\
	\|K_R\|_{L^1(\Omega)},\
	\|G_R\|_{L^1(\Omega;\mathbb R^2)},\
	\Omega $.
\end{lemma}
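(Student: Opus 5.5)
The proof splits into two independent parts: the nonlocal velocity $V[\rho]=\mathcal N[\rho]/\mathcal K[\rho]$ and the logarithmic gradient $\nabla A/A$. In both cases the structure is a quotient of a numerator in $H^3$ (or $H^4$) by a denominator bounded below by a positive constant. The tools are the algebra property of $H^s(\mathbb T^2)$ for $s>1$ (so $H^3$ is a Banach algebra continuously embedded in $L^\infty$), and a Moser-type composition estimate for $1/f$ when $f\ge c>0$ and $f\in H^s$.

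\textbf{Step 1 (convolution bounds).} On the torus, derivatives commute with convolution. Young's inequality applied to each Fourier/derivative order gives
\[
\|\mathcal N[u]\|_{H^3}\le \|G_R\|_{L^1}\|u\|_{H^3},\qquad \|\mathcal K[u]-\varepsilon\|_{H^3}\le \|K_R\|_{L^1}\|u\|_{H^3}.
\]
Both maps are linear in $u$. Hence $\|\mathcal N[u]-\mathcal N[v]\|_{H^3}\le \|G_R\|_{L^1}\|u-v\|_{H^3}$, and similarly for $\mathcal K$. Since $u\ge0$ and $K_R\ge0$, we get $\mathcal K[u]\ge\varepsilon$ pointwise. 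This lower bound is the only place where nonnegativity of $u,v$ is used.

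\textbf{Step 2 (the reciprocal).} Write $f=\mathcal K[u]$, so that $f\ge\varepsilon$ and $\|f\|_{H^3}\le \varepsilon|\Omega|^{1/2}+\|K_R\|_{L^1}M$. By Sobolev embedding, $\|f\|_{W^{1,\infty}}\lesssim \|f\|_{H^3}$.

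We bound $1/f$ in $H^3$ by expanding its derivatives with Faà di Bruno up to order three. Each term has the form $f^{-1-j}\prod\partial^{\alpha_i}f$ with $\sum|\alpha_i|\le3$. We control these as follows:
\begin{itemize}
\item The highest-order factor is placed in $L^2$.
\item Second derivatives are placed in $L^4$, using $H^1\subset L^4$ in two dimensions.
\item First derivatives are placed in $L^\infty$.
\item Powers of $f^{-1}$ are bounded by $\varepsilon^{-1-j}$.
\end{itemize}
This gives $\|1/f\|_{H^3}\le C(\varepsilon,\|f\|_{H^3},|\Omega|)$. The algebra property then yields $\|V[u]\|_{H^3}\le C\|\mathcal N[u]\|_{H^3}\|1/\mathcal K[u]\|_{H^3}\le C$.

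\textbf{Step 3 (Lipschitz estimate for $V$).} Use the identity
\[
V[u]-V[v]=\frac{\mathcal N[u]-\mathcal N[v]}{\mathcal K[u]}-\mathcal N[v]\,\frac{\mathcal K[u]-\mathcal K[v]}{\mathcal K[u]\mathcal K[v]}.
\]
Each factor is bounded by Steps 1–2 and the algebra property. Note that $1/(\mathcal K[u]\mathcal K[v])$ is the product of two reciprocals, each already bounded in $H^3$. This gives $\|V[u]-V[v]\|_{H^3}\le C\|u-v\|_{H^3}$.

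\textbf{Step 4 (the attention terms).} We have $A_i=A^0+S_i\in H^4$ with $\|A_i\|_{H^4}\le \|A^0\|_{H^4}+M$ and $A_i\ge a_0$. Hence $\nabla A_i\in H^3$, and Step 2 with $\varepsilon$ replaced by $a_0$ bounds $\|1/A_i\|_{H^3}$. The algebra property then gives the first bound on $\nabla A_i/A_i$.

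For the difference, use
\[
\frac{\nabla A_1}{A_1}-\frac{\nabla A_2}{A_2}=\frac{\nabla(S_1-S_2)}{A_1}-\nabla A_2\,\frac{S_1-S_2}{A_1A_2},
\]
together with $\|\nabla(S_1-S_2)\|_{H^3}+\|S_1-S_2\|_{H^3}\le\|S_1-S_2\|_{H^4}$.

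\textbf{Main obstacle.} The only non-routine step is the quantitative reciprocal estimate in Step 2. It requires checking that every term in the third-derivative expansion is controlled by the $H^3$ norm alone in dimension two. This holds because $H^3\subset W^{1,\infty}$ and $H^2\subset W^{1,4}$. If a cleaner route is wanted, one can instead invoke the standard composition lemma $\|F(f)\|_{H^s}\le C(\|f\|_{L^\infty})(1+\|f\|_{H^s})$, applied with $F$ smooth on $[\varepsilon,\infty)$.

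The constants depend only on the quantities listed in the statement. The factor $|\Omega|$ enters through the constant function $\varepsilon$, and the Sobolev constants depend only on $\Omega$.
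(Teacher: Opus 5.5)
Your proposal is correct and follows the paper's proof essentially step for step: Young's inequality for the convolutions, the pointwise lower bound $\mathcal K[u]\ge\varepsilon$ from $u\ge0$, a composition estimate for the reciprocal, the algebra property of $H^3(\mathbb T^2)$, and the same two algebraic identities for the Lipschitz differences. The differences are cosmetic. You sketch the reciprocal bound via Fa\`a di Bruno instead of citing the Brezis--Mironescu composition estimate, you bound $1/A_i$ only in $H^3$ (which suffices) where the paper uses $H^4$, and you keep track of the constant $\varepsilon|\Omega|^{1/2}$ in $\|\mathcal K[u]\|_{H^3}$, which the paper's bound omits.
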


These estimates provide the Lipschitz conditions needed to
derive the following basic structural properties. The proof is given in \ref{app:proof-positivity}.
\begin{theorem}[Basic structural properties]
	\label{thm:basic-properties}
	Under the setting in Lemma \ref{lem:coeff-estimates}, there exists $T>0$ and a
	solution $(\rho,S)$ of \eqref{eq:HK-S-scaled} on
	$\Omega\times[0,T]$ such that
	\[
	\rho\in C([0,T];H^3(\Omega))\cap L^2([0,T];H^4(\Omega)),
	\quad
	S\in C([0,T];H^4(\Omega))\cap L^2([0,T];H^5(\Omega)).
	\]
	Moreover, the following properties hold.
	
	\begin{itemize}
		
		\item[(i)] \textbf{Mass conservation of $\rho$.}
		The total mass of $\rho$ is conserved:
		\[
		\int_\Omega \rho(x,t)\,dx
		=
		\int_\Omega \rho_0(x)\,dx,
		\quad 0\le t\le T.
		\]
		
		\item[(ii)] \textbf{non-negativity and non-degeneracy.}
		The solution preserves non-negativity:
		\[
		\rho(x,t)\ge0,\quad S(x,t)\ge0
		\quad
		\text{a.e. in } \Omega\times(0,T).
		\]
		Consequently, the effective attention field
		\[
		A(x,t):=A^0(x)+S(x,t)
		\]
		satisfies
		\[
		A(x,t)\ge a_0>0
		\quad
		\text{a.e. in } \Omega\times(0,T).
		\]
	\end{itemize}
\end{theorem}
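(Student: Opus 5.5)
Local existence will come from a Banach fixed-point argument on a linearized problem, using the Lipschitz bounds of Lemma~\ref{lem:coeff-estimates}. Structural properties (i)--(ii) then follow from the divergence form of the $\rho$-equation and from parabolic comparison arguments.

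\textbf{Local existence.} Fix $M$ larger than twice $\|\rho_0\|_{H^3}+\|S_0\|_{H^4}$. Define the closed set
\[
X_T=\{(u,\sigma):\ \|u\|_{C([0,T];H^3)}\le M,\ \|\sigma\|_{C([0,T];H^4)}\le M,\ u\ge0,\ \sigma\ge0\}.
\]
Given $(u,\sigma)\in X_T$, set $A_\sigma=A^0+\sigma\ge a_0$. Solve the two linear problems
\[
\partial_t\rho-D_\rho\Delta\rho=-\nabla\cdot\Bigl(\rho V[u]+2D_\rho\rho\frac{\nabla A_\sigma}{A_\sigma}\Bigr),
\]
\[
\partial_t S-D_S\Delta S+\omega S=\theta\rho .
\]
Lemma~\ref{lem:coeff-estimates} gives coefficients bounded in $H^3$, and $H^3(\mathbb T^2)$ is an algebra. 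So Galerkin/Fourier energy estimates at the $H^3$ level for $\rho$ give
\[
\rho\in C([0,T];H^3)\cap L^2(0,T;H^4),\qquad \sup_t\|\rho\|_{H^3}^2\le e^{CT}\|\rho_0\|_{H^3}^2 .
\]
The heat equation with source $\theta\rho\in L^2H^4$ then gives
\[
S\in C([0,T];H^4)\cap L^2(0,T;H^5).
\]
For small $T$ the map $\Phi(u,\sigma)=(\rho,S)$ sends $X_T$ into itself; nonnegativity is addressed in the next step. Differencing two images and again using the Lipschitz estimates of Lemma~\ref{lem:coeff-estimates} gives
\[
\|\Phi(u_1,\sigma_1)-\Phi(u_2,\sigma_2)\|\le C\sqrt T\,\|(u_1-u_2,\sigma_1-\sigma_2)\|.
\]
Hence $\Phi$ is a contraction in $C([0,T];H^3)\times C([0,T];H^4)$ for $T$ small. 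The expected main obstacle is the derivative count in the difference estimate: the term $\nabla\cdot(\rho(b_1-b_2))$ costs one derivative. I would absorb it by pairing with $\partial^\alpha(\rho_1-\rho_2)$, $|\alpha|\le3$, integrating by parts once, and using the $L^2H^4$ dissipation. This is why the bound on $\nabla A/A$ must be in $H^3$, which needs $S\in H^4$.

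\textbf{Nonnegativity.} Nonnegativity of $\rho$ holds for the linear problem because it is a conservative drift-diffusion equation with drift $b=V[u]+2D_\rho\nabla\ln A_\sigma$ and $\nabla\cdot b\in H^2\subset L^\infty$. Test with $\rho^-=\max(-\rho,0)$ to get
\[
\tfrac{d}{dt}\|\rho^-\|_2^2\le C\|\rho^-\|_2^2 ,
\]
using $\int\rho^- b\cdot\nabla\rho^-\le \|b\|_\infty\|\rho^-\|_2\|\nabla\rho^-\|_2$ and Young's inequality. Gronwall's lemma with $\rho_0^-=0$ gives $\rho\ge0$. Then $S$ satisfies $\partial_tS-D_S\Delta S+\omega S=\theta\rho\ge0$ with $S_0\ge0$, so the same $S^-$ test (or Duhamel with the positive heat kernel) gives $S\ge0$. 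Thus $\Phi(X_T)\subset X_T$, and the fixed point is nonnegative. Consequently
\[
A=A^0+S\ge A^0\ge a_0 .
\]
This also justifies a posteriori that the hypothesis $A_i\ge a_0$ of Lemma~\ref{lem:coeff-estimates} is preserved along the iteration.

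\textbf{Mass conservation.} Integrate the $\rho$-equation over $\mathbb T^2$. Every term is in divergence form with $H^3$ (hence continuous) fluxes on the periodic domain, so $\frac{d}{dt}\int_\Omega\rho\,dx=0$, which is property (i).

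Uniqueness follows from the contraction argument.
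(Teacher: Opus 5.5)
Your proposal is correct and follows essentially the same route as the paper. Both use a Banach fixed-point argument for the linear problem with frozen drift $V[\rho^{(n)}]+2D_\rho\nabla A^{(n)}/A^{(n)}$ on a nonnegative bounded set in $C([0,T];H^3)\times C([0,T];H^4)$, small-$T$ energy estimates based on Lemma~\ref{lem:coeff-estimates} for self-mapping and contraction, negative-part testing to get $\rho,S\ge0$ and hence $A\ge a_0$, and integration over $\mathbb T^2$ for mass conservation. The differences are only bookkeeping, and both versions are equivalent: the paper adds the $\rho$- and $S$-energy estimates with a weight $\lambda D_\rho\ge C_3+1$ and contracts consecutive iterates, whereas you treat $S$ as a heat equation with source $\theta\rho$ and compare two arbitrary images.
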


\section{Phase transition of spatial patterns and mathematical analysis}
\label{sec:phase_stability}

Assume that the baseline attention is
spatially uniform, $A^0(x)\equiv A^0>0$.
Under these assumptions, the system admits the spatially homogeneous equilibrium
\begin{equation}\label{eq:equilibrium}
	\rho(x,t)\equiv \rho_0>0,\quad
	S(x,t)\equiv S_0>0,\quad
	A(x,t)\equiv \bar A:=A^0+S_0,
\end{equation}
where
\begin{equation}
	S_0=\frac{\theta}{\omega} \rho_0, \quad \bar A=A^0+\frac{\theta}{\omega} \rho_0.
\end{equation}
One is interested in the phase patterns of NODAR after adding some perturbations on such  equilibrium state.

\begin{figure}[htbp]
	\centering
	\includegraphics[width=1\textwidth]{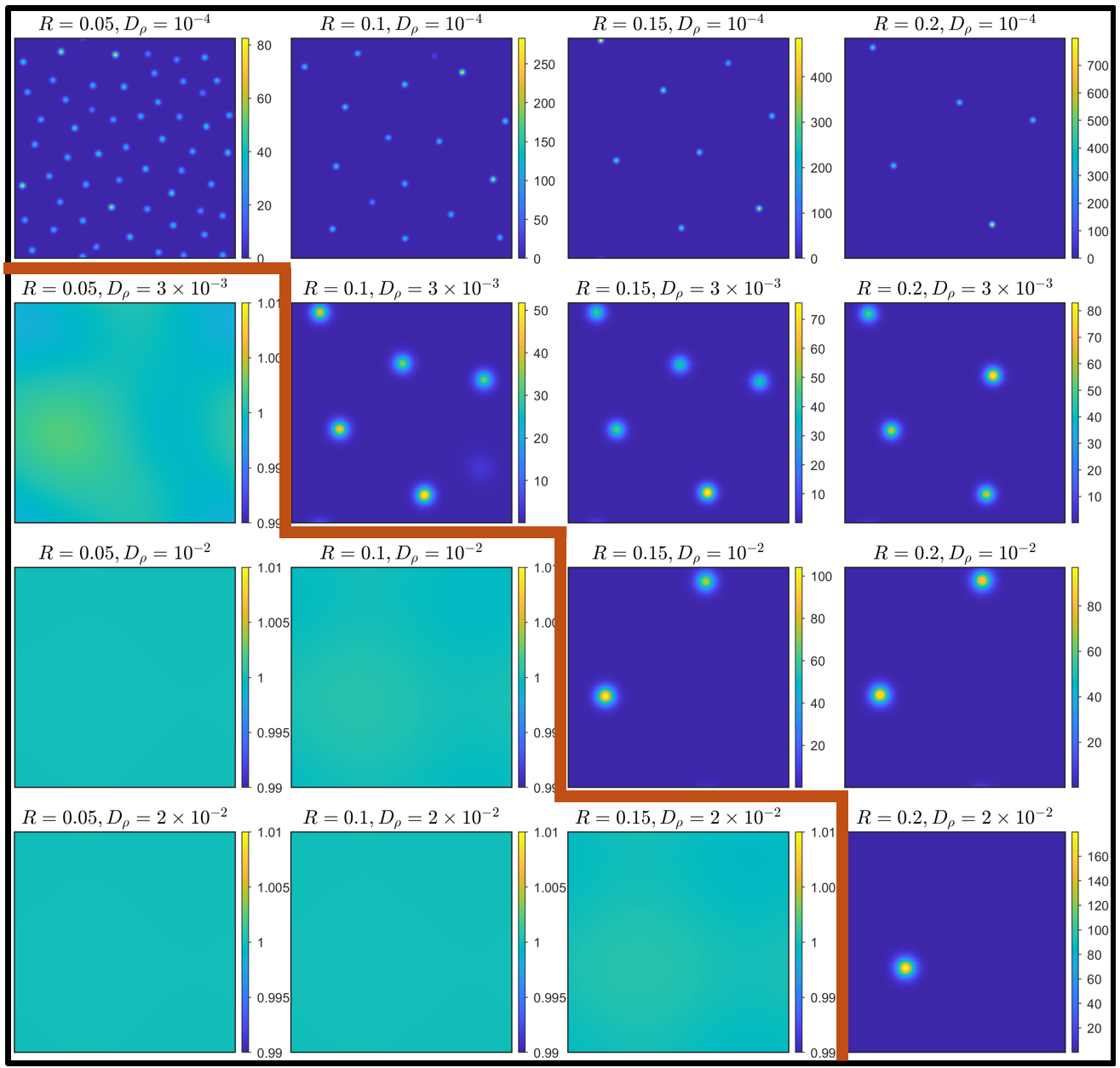}
	\caption{
		Spatial patterns of $\rho(x,t)$ at $T=100$ for different values of
		$(R,D_\rho)$, obtained from small perturbations of the homogeneous
		equilibrium. Increasing $R$ or decreasing $D_\rho$ drives a transition
		from homogeneous states to clustered patterns.
	}
	\label{fig:phase-diagram}
\end{figure}

We first provide numerical evidence of a phase transition between 
dispersed and clustered regimes. 
Fig.~\ref{fig:phase-diagram} illustrates this transition in the 
parameter plane $(R,D_\rho)$. 
For fast diffusion or small interaction radius, spatial perturbations 
decay and the solution relaxes toward a homogeneous equilibrium, 
corresponding to a consensus state. 
In contrast, when the interaction radius is sufficiently large or the 
diffusion is weak, persistent spatial clusters emerge, indicating 
non-consensus on localized opinion groups. 
The boundary separating these regimes suggests 
the existence of a critical threshold.

\subsection{Main results}

To explain the phase transition observed in the numerical simulations, 
we analyze the linear stability of the spatially homogeneous equilibrium.
This provides a theoretical characterization of the conditions under 
which aggregation emerges from small perturbations, thereby 
distinguishing consensus and non-consensus regimes.

\subsubsection{Linear stability of the homogeneous equilibrium}

%To investigate the stability of this equilibrium, 
we introduce small perturbations on the equilibrium,
\[
\rho=\rho_0+\eta r,\quad
S=S_0+\eta s,\quad 0<\eta\ll 1,
\]
and retain terms of order $\mathcal O(\eta)$.

%By symmetry of the interaction kernel, the nonlocal velocity 
%vanishes at the homogeneous state, i.e. $V[\rho_0]\equiv 0$. 
%Moreover, since the numerator of $V[\rho]$ vanishes at 
%$\rho=\rho_0$, variations of the denominator do not contribute 
%at order $\mathcal O(\eta)$. Consequently, the transport term can be linearized as
%\[
%-\nabla\!\cdot(\rho V[\rho])\approx -\rho_0\nabla\!\cdot V_1[r],
%\]
%while the attention-driven cross-diffusion introduces a coupling
%between $r$ and $s$ through the effective attention level $\bar A$.

By symmetry of the interaction kernel, the nonlocal velocity vanishes
at the homogeneous state, i.e. \(V[\rho_0]\equiv 0\). Moreover, since
the numerator of \(V[\rho]\) vanishes at \(\rho=\rho_0\), variations of
the denominator do not contribute at order \(\mathcal O(\eta)\). More precisely,
$V[\rho_0+\eta r]
=
\eta V_1[r]+o(\eta)$,
where
\[
V_1[r](x)
:=
\frac{1}{Z_R}
\int_\Omega K_R(y-x)(y-x)r(y)\,dy,
\qquad
Z_R=\rho_0\int_\Omega K_R(z)\,dz+\varepsilon .
\]
Consequently, $-\nabla\!\cdot(\rho V[\rho])
=
-\eta\rho_0\nabla\!\cdot V_1[r]+o(\eta).$

Now we seek normal modes
\[
r(x,t)=\widehat r\,e^{\lambda t+i k\cdot x},\quad
s(x,t)=\widehat s\,e^{\lambda t+i k\cdot x},\quad
k\in 2\pi\mathbb{Z}^2.
\]
By omitting $o(\eta)$ terms, it arrives at the $2\times2$ eigenvalue problem
\begin{equation}\label{eq:accd-eigsyst-nd}
	\left\{
	\begin{aligned}
		\lambda \widehat r
		&=
		\Big(\frac{\rho_0}{Z_R}\,m(k)-D_\rho |k|^2\Big)\widehat r
		+\Big(2D_\rho\,\frac{\rho_0}{\bar A}\,|k|^2\Big)\widehat s,
		\\[1mm]
		\lambda \widehat s
		&=
		-\big(D_S |k|^2+\omega\big)\widehat s
		+\theta\,\widehat r,
	\end{aligned}
	\right.
\end{equation}
where
\[
m(k)=-k\cdot\nabla_k\widehat K_R(k),\quad \widehat K_R(k) = \int_{\mathbb{R}^2}K_R(z) e^{-i k z}\,dz.
\]

Denote by
\begin{equation}\label{def.abc}
	a(k)=\frac{\rho_0}{Z_R}m(k)-D_\rho|k|^2, \quad  
	b(k)=D_S|k|^2+\omega, \quad 
	c(k)=2D_\rho\,\frac{\rho_0}{\bar A}|k|^2.
\end{equation}
From the determinant of Eq.~\eqref{eq:accd-eigsyst-nd}, we obtain the two-branch dispersion relation
\begin{equation}\label{eq:dispersion-accd-nd}
	\lambda_\pm(k)
	=
	\frac12\Big(
	a(k)-b(k)
	\pm
	\sqrt{(a(k)+b(k))^2+4c(k)\theta}
	\Big).
\end{equation}

Linear stability of the homogeneous equilibrium is determined by the
dominant branch $\lambda_{\pm}(k)$:
if $\lambda_\pm(k)<0$ for all $k\neq0$, then $(\rho_0,S_0)$ (equivalently, $(\rho_0,\bar A)$)
is linearly stable; whereas the existence of $k^\ast\neq0$ with $\lambda_{\pm}(k^\ast)>0$
signals a finite-wavelength instability and the onset of aggregation.

%The dispersion relation $\lambda_\pm(k)$ provides a complete
%characterization of the linear stability of the homogeneous state.
In particular, for the disk kernel $K_R(z)=\mathbf{1}_{\{|z|\le R\}}$, using
rotational symmetry, it yields
\begin{equation}\label{eq:mk}
	m(k)
	=
	2\pi R^2\Big(\frac{2J_1(|k|R)}{|k|R} - 
	J_0(|k|R)
	\Big),
	\quad 
	Z_R=\rho_0 \pi R^2+\varepsilon.
\end{equation}
where $J_0$ and $J_1$ are Bessel functions of the first kind.

The following lemma characterizes the asymptotic behaviors
in the long-wave and high-frequency regimes, indicating that 
any unstable modes (if they exist) are confined to
a bounded range of wavenumbers.

\begin{lemma}[Long-wave behavior and high-frequency damping]
	\label{lem:accd-small-large-k}
	Suppose $K_R(z)=\mathbf{1}_{\{|z|\le R\}}$ and let $\lambda_\pm(k)$ be given by \eqref{eq:dispersion-accd-nd}. Then the following results are obtained.
	
	\begin{enumerate}
		\item[(i)] For zeroth mode $k=0$, 
		\[
		\lambda_+(0)=0,
		\quad
		\lambda_-(0)=-\omega,
		\]
		where the zero eigenvalue corresponds to mass conservation.
		
		\item[(ii)] As $|k|\to0$,
		\[
		\lambda_-(k)=-\omega+\mathcal O(|k|^2),
		\quad
		\lambda_+(k)=\mu\,|k|^2+\mathcal O(|k|^4),
		\]
		where for the disk kernel $K_R=\mathbf{1}_{\{|z|\le R\}}$,
		\begin{equation}\label{eq:mu-accd}
			\mu
			=
			-D_\rho
			+\rho_0 \left(\frac{\pi R^4}{4Z_R}
			+\frac{2D_\rho\theta}{\bar{A}\omega}\right).
		\end{equation}
		The sign of $\mu$ determines whether long-wave
		perturbations are damped $(\mu<0)$ or amplified $(\mu>0)$.
		
		\item[(iii)] As $|k|\to\infty$,
		\[
		\lambda_+(k)\sim -\min\{D_\rho,D_S\}|k|^2,
		\quad
		\lambda_-(k)\sim -\max\{D_\rho,D_S\}|k|^2,
		\]
		so that high-frequency modes are damped.
	\end{enumerate}
\end{lemma}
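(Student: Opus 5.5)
The plan is to work directly with the explicit dispersion relation \eqref{eq:dispersion-accd-nd}, i.e.\ with the eigenvalues of the $2\times2$ matrix $\begin{pmatrix} a(k) & c(k)\\ \theta & -b(k)\end{pmatrix}$, using \eqref{def.abc} and \eqref{eq:mk}. The only kernel-dependent input is the behaviour of $m(k)$ near $|k|=0$ and as $|k|\to\infty$. Everything else is elementary expansion of the square root. Throughout, $\lambda_+$ is the larger root, so $\lambda_+\ge\max\{a,-b\}$ and $\lambda_-\le\min\{a,-b\}$.

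\emph{Step 1: small-$|k|$ expansion of $m$.} Write $x=|k|R$ and use the Bessel series $J_0(x)=1-x^2/4+x^4/64+\cdots$ and $2J_1(x)/x=1-x^2/8+x^4/192+\cdots$. Then \eqref{eq:mk} gives
\[
m(k)=\frac{\pi R^4}{4}|k|^2+\mathcal O(|k|^4).
\]
(Equivalently, $\widehat K_R(k)=\pi R^2\bigl(1-(|k|R)^2/8+\cdots\bigr)$ and $m=-|k|\,\partial_{|k|}\widehat K_R$.) Hence
\[
a(k)=\Bigl(\frac{\rho_0\pi R^4}{4Z_R}-D_\rho\Bigr)|k|^2+\mathcal O(|k|^4),\qquad b(k)=\omega+D_S|k|^2,
\]
and $c(k)=\mathcal O(|k|^2)$.

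\emph{Step 2: parts (i) and (ii).} At $k=0$ we have $a=c=0$ and $b=\omega$, so the square root equals $\omega$. This gives $\lambda_+(0)=0$ and $\lambda_-(0)=-\omega$. The zero eigenvalue comes from the $\widehat r$-equation at $k=0$, i.e.\ the conserved mass of Theorem~\ref{thm:basic-properties}(i).

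For small $|k|$ we have $a+b\to\omega>0$ and $4c\theta=\mathcal O(|k|^2)$. Expanding $\sqrt{(a+b)^2+4c\theta}=(a+b)+2c\theta/(a+b)+\mathcal O(c^2)$ gives
\[
\lambda_+=a+\frac{c\theta}{a+b}+\mathcal O(|k|^4),\qquad \lambda_-=-b-\frac{c\theta}{a+b}+\mathcal O(|k|^4).
\]
Since $c\theta/(a+b)=c\theta/\omega+\mathcal O(|k|^4)$ and $c\theta/\omega=2D_\rho\rho_0\theta|k|^2/(\bar A\omega)$, the $|k|^2$ coefficient of $\lambda_+$ is exactly $\mu$ in \eqref{eq:mu-accd}. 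Also $\lambda_-=-\omega+\mathcal O(|k|^2)$. The remainder is genuinely $\mathcal O(|k|^4)$ because all ingredients are even analytic functions of $|k|$.

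\emph{Step 3: part (iii).} First show that $m$ is uniformly bounded. This follows from the standard bounds $|J_0|\le1$ and $|2J_1(x)/x|\le1$, so $|m(k)|\le4\pi R^2$. Hence $a=-D_\rho|k|^2+\mathcal O(1)$, $b=D_S|k|^2+\omega$ and $c=\mathcal O(|k|^2)$.

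\emph{Case $D_\rho\neq D_S$.} Then $(a+b)^2\sim(D_S-D_\rho)^2|k|^4$ dominates $4c\theta=\mathcal O(|k|^2)$. The roots are therefore $a+\mathcal O(1)$ and $-b+\mathcal O(1)$. Ordering them gives $\lambda_+\sim-\min\{D_\rho,D_S\}|k|^2$ and $\lambda_-\sim-\max\{D_\rho,D_S\}|k|^2$.

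\emph{Case $D_\rho=D_S$.} This is the step I expect to need care. Here $a+b=\mathcal O(1)$, so the square root is $\mathcal O(|k|)$ rather than $\mathcal O(|k|^2)$. Both roots equal $\tfrac12(a-b)+\mathcal O(|k|)=-D_\rho|k|^2+\mathcal O(|k|)$. This still yields the stated leading asymptotics, since then $\min=\max$. In every case the high-frequency modes are damped.
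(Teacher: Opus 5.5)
Your proposal is correct and follows essentially the same route as the paper: a Bessel series for $m(k)$ near $|k|=0$, a first-order expansion of the square root in \eqref{eq:dispersion-accd-nd}, and a size bound on $m$ for large $|k|$ giving $a(k)=-D_\rho|k|^2+o(|k|^2)$. The differences are minor. You expand the root about $a+b$ rather than about $b$, so you avoid the paper's extra term $a^2/(4b)=\mathcal O(|k|^4)$. You use the elementary bound $|m(k)|\le 4\pi R^2$ where the paper uses the Bessel asymptotics $m(k)=\mathcal O(|k|^{-1/2})$. Your separate treatment of $D_\rho=D_S$ is correct but not needed, since $\sqrt{(a+b)^2+4c\theta}=\sqrt{(D_S-D_\rho)^2|k|^4+o(|k|^4)}=|D_S-D_\rho|\,|k|^2+o(|k|^2)$ holds uniformly in both cases.
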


\begin{proof}
	At $k=0$, we have $m(0)=0$ and $c(0)=0$, so that $a(0)=0$, $b(0)=\omega$ and
	\[
	\lambda_+(0)=0,
	\quad
	\lambda_-(0)=-\omega.
	\]
	%where the zero eigenvalue corresponds to the constant mode associated with mass conservation.
	
	As \(|k|\to0\), we have
	$a(k)=\mathcal O(|k|^2)$, $b(k)=\omega+\mathcal O(|k|^2)$, $c(k)=\mathcal O(|k|^2)$.
	In fact, for the disk kernel $K_R=\mathbf{1}_{\{|z|\le R\}}$,
	\[
	m(k)
	=
	2\pi R^2
	\left(\frac{2J_1(q)}{q} - 
	J_0(q)
	\right),
	\quad
	q=|k|R.
	\]
	Using the asymptotic expansion of the Bessel functions near
	$q=0$,
	\[
	\frac{2J_1(q)}{q}-
	J_0(q)
	=
	\frac{q^2}{8}+\mathcal O(q^4),
	\]
	we obtain
	\[
	m(k)
	=
	\frac{\pi R^4}{4}|k|^2
	+
	\mathcal O(|k|^4),
	\quad |k|\to0.
	\]
	Therefore,
	\begin{equation}\label{eq.1}
		a(k)
		=
		-D_\rho|k|^2
		+
		\frac{\rho_0}{Z_R}m(k)
		=
		\left(
		-D_\rho
		+
		\frac{\rho_0}{Z_R}\frac{\pi R^4}{4}
		\right)|k|^2
		+
		\mathcal O(|k|^4).
	\end{equation}
	In addition, using $(\omega + D_S|k|^2)^{-1} \sim \omega^{-1}(1 - \frac{D_S}{\omega}|k|^2 + \mathcal{O}(|k|^4))$, it yields 
	\begin{equation}\label{eq.2}
		\frac{c(k)\theta}{b(k)}
		=
		\frac{2D_\rho\rho_0\theta}{\bar A\omega}|k|^2
		+
		\mathcal O(|k|^4).
	\end{equation}
	
	Since
	\[
	(a(k)+b(k))^2+4c(k)\theta=	  \omega^2+\mathcal O(|k|^2)
	\]				
	%	    Since
	%	     $(a(k)+b(k))^2+4c(k)\theta=
	%	    \omega^2+\mathcal O(|k|^2)$.
	it further has
	$ \sqrt{(a(k)+b(k))^2+4c(k)\theta}
	=
	\omega+\mathcal O(|k|^2)$.
	Substituting it into the dispersion relation \eqref{eq:dispersion-accd-nd} gives
	$\lambda_-(k)
	=
	-\omega+\mathcal O(|k|^2)$.
	
	It remains to expand the branch \(\lambda_+(k)\). Write
	\[
	(a(k)+b(k))^2+4c(k)\theta
	=
	b^2(k)\left(
	1+
	\frac{2a(k)b(k)+a^2(k)+4c(k)\theta}{b^2(k)}
	\right).
	\]
	Using the Taylor expansion,
	we obtain
	\[
	\sqrt{(a(k)+b^2(k))+4c(k)\theta}
	=
	b(k)+
	\frac{2a(k)b(k)+a^2(k)+4c(k)\theta}{2b(k)}
	+
	\mathcal O(|k|^4).
	\]
	Substituting this expansion into \(\lambda_+(k)\) gives
	\[
	\lambda_+(k)
	=
	a(k)+\frac{c(k)\theta}{b(k)}
	+\frac{a^2(k)}{4b(k)}
	+\mathcal O(|k|^4).
	\]
	
	%	 Moreover,
	%		\[
	%		\frac{2a(k)b(k)+a^2(k)+4c(k)\theta}{b^2(k)}
	%		=
	%		\mathcal O(|k|^2).
	%		\]		
	Since \(a^2(k)/(4b(k))=\mathcal O(|k|^4)\), this term can be absorbed
	into the remainder, therefore
	\[
	\lambda_+(k)
	=
	a(k)+\frac{c(k)\theta}{b(k)}
	+
	\mathcal O(|k|^4).
	\]

	Combining with Eqs.~\eqref{eq.1} and \eqref{eq.2}, it arrives at
	\[
	\lambda_+(k)
	=
	\left(
	-D_\rho
	+
	\frac{\rho_0}{Z_R}\frac{\pi R^4}{4}
	+
	\frac{2D_\rho\rho_0\theta}{\bar A\omega}
	\right)|k|^2
	+
	\mathcal O(|k|^4),
	\]
	which implies $\lambda_+(k)=\mu |k|^2+\mathcal O(|k|^4)$ with $\mu$ given by \eqref{eq:mu-accd}.

	Finally, we consider the high-frequency limit $|k|\to\infty$. For the
	disk kernel, using the Bessel asymptotic formula
	\[
	J_\nu(q)
	=
	\sqrt{\frac{2}{\pi q}}
	\cos\left(q-\frac{\nu\pi}{2}-\frac{\pi}{4}\right)
	+
	\mathcal O(q^{-3/2}),
	\quad q\to\infty,
	\]
	we obtain
	$J_0(q)=\mathcal O(q^{-1/2})$, and $J_1(q)/q=\mathcal O(q^{-3/2})$.
	Hence
	\[
	m(k)
	=
	2\pi R^2
	\left(\frac{2J_1(q)}{q} - 
	J_0(q)
	\right)
	=
	\mathcal O(q^{-1/2})
	=
	\mathcal O(|k|^{-1/2}).
	\]
	Thus
	\[
	a(k)
	=
	-D_\rho|k|^2
	+
	\frac{\rho_0}{Z_R}m(k)
	=
	-D_\rho|k|^2
	+
	o(|k|^2),
	\quad
	b(k)=D_S|k|^2+\omega.
	%=
	%D_S|k|^2+\mathcal O(1).
	\]
	Substituting it into \eqref{eq:dispersion-accd-nd} yields
	\[
	\lambda_+(k)\sim -\min\{D_\rho,D_S\}|k|^2,
	\quad
	\lambda_-(k)\sim -\max\{D_\rho,D_S\}|k|^2,
	\]
	which shows that all high-frequency modes are damped.
	
\end{proof}

\begin{figure}[h!]
	\centering
	\includegraphics[width=0.5\textwidth]{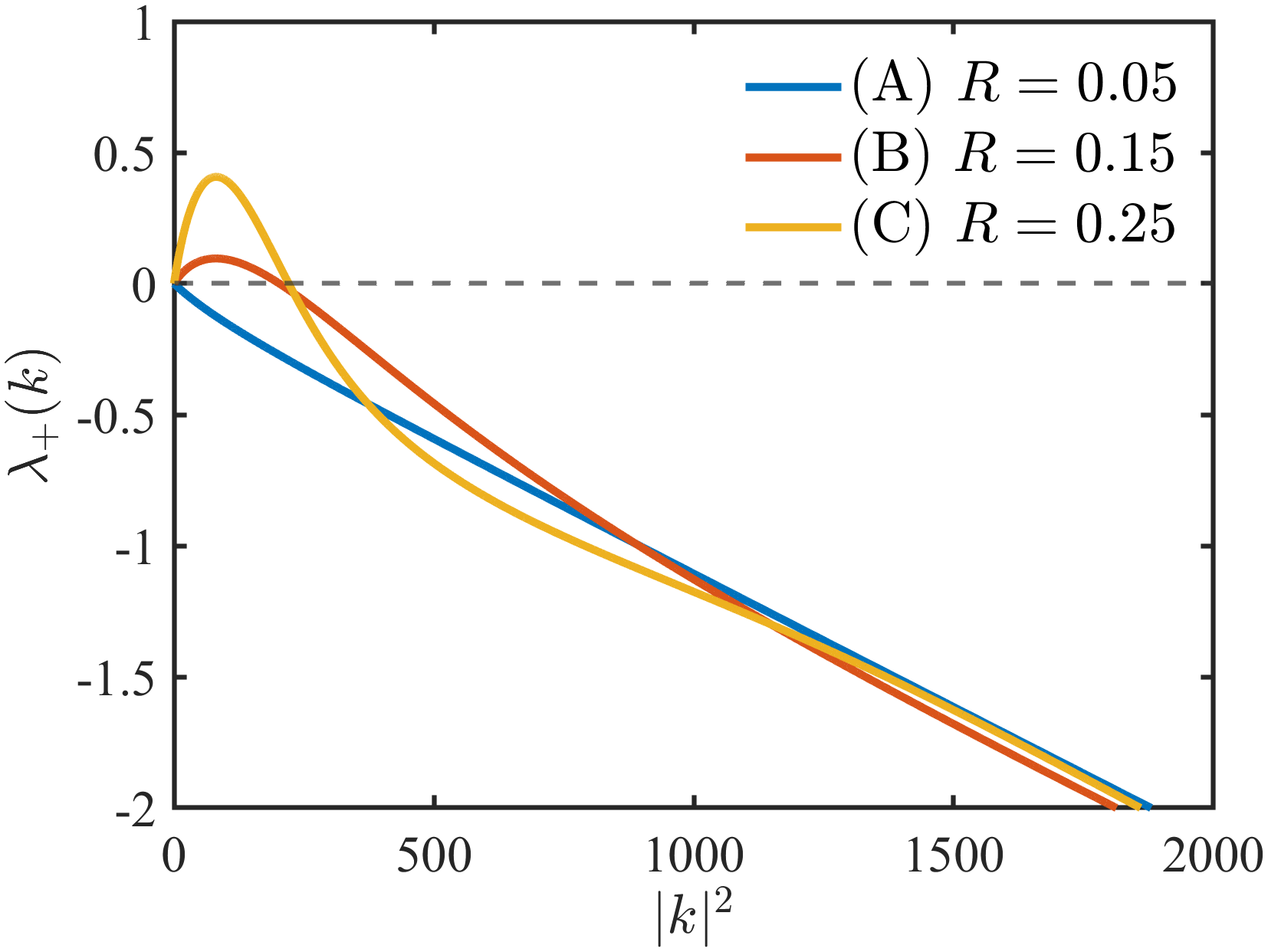}
	\vspace{-5pt}
	\caption{
		Dispersion curves $\lambda_+(k)$ for three representative values of the
		interaction radius $R$.
		A finite band of unstable wavenumbers appears as $R$ increases. Parameters are $A^0=1/30$, $D_\rho =0.01$, $D_S=10^{-3}$, $\omega=1$, and $\theta=0.02$.
	}
	\label{fig:dispersion_example}
	\vspace{-5pt}
\end{figure}

To illustrate the dispersion relation predicted by
Lemma~\ref{lem:accd-small-large-k}, we display in
Fig.~\ref{fig:dispersion_example} the dispersion curves
$\lambda_+(k)$ for several representative values of the interaction
radius $R$.
When $R$ is sufficiently small (curve (A)), the dispersion relation
remains negative for all wavenumbers, indicating that the homogeneous
state is linearly stable.
As $R$ increases (curve (B)), a band of intermediate wavenumbers
appears for which $\lambda_+(k)>0$, signalling the onset of a
finite-band instability.
For larger values of $R$ (curve (C)), the unstable band becomes more
pronounced and the maximal growth rate increases.

These observations are consistent with the asymptotic behaviour
established in Lemma~\ref{lem:accd-small-large-k}.
In particular, long-wave modes are governed by the coefficient $\mu$,
while high-frequency modes are stabilized by diffusion.
Consequently, instability occurs within a bounded interval of
intermediate wavenumbers.
Moreover, Fig.~\ref{fig:dispersion_example} shows that both the
existence and the location of unstable modes depend sensitively on the
interaction radius $R$.
This dependence suggests that the onset of instability is determined by
a threshold relation between the interaction radius $R$ and the
diffusion coefficient $D_\rho$.

Now we fix $D_S$ and $R$ and show that the instability  threshold for $D_{\rho}$ is closely related to a quantity for $k \ne 0$,
\begin{equation}\label{eq:Dk-accd}
	D_{\rho,\mathrm{crit}}(k,R,D_S)
	:=
	\underbrace{\frac{\rho_0}{Z_R}\frac{m(k)}{|k|^2}}_{\textup{opinion alignment}} \times
	\underbrace{\frac{1}{\Gamma(k,D_S)}}_{\textup{environmental feedback}}
\end{equation}
with $Z_R=\rho_0\pi R^2+\varepsilon$. The first term corresponds to the critical point in the opinion alignment (see Eq.~\eqref{eq:Dk-base} below). The second term is given by
\begin{equation}\label{eq:gamma-accd}
	\Gamma(k,D_S)
	:=
	1-\frac{2\rho_0\theta}{(A^0 + \frac{\theta}{\omega} \rho_0)\big(D_S|k|^2+\omega\big)} ,
\end{equation}
which presents an explicit correction by the environmental feedback. 

We can divide the parameter regime into two parts. 
\begin{equation}\label{eq:gamma-negative}
	\textup{Case I}:\quad A^0 < \frac{\rho_0 \theta}{\omega} ~\Rightarrow~
	\Gamma(k,D_S) < 0 ~~\textup{for}~~ |k|^2 < \frac{1}{D_S} \left(\frac{2\rho_0 \theta}{A^0 + \frac{\rho_0 \theta}{\omega}} - \omega\right),
\end{equation}
and
\begin{equation}\label{eq:gamma-positive}
	\textup{Case II}:\quad 	A^0 > \frac{\rho_0 \theta}{\omega} ~\Rightarrow~
	0 < \Gamma(k,D_S) < 1 ~~\textup{for all}~k.
\end{equation}

\begin{theorem}[Instability criterion for fixed \(D_S\)]
	\label{thm:accd-DR-threshold}
	Define a global threshold for fixed \(D_S\),
	\begin{equation}\label{eq:Dcrit-global}
		D_{\mathrm{crit}}^{\mathrm{NODAR}}(R,D_S)
		:=
		\sup_{\substack{k\neq0\\ m(k)>0}}
		D_{\rho,\mathrm{crit}}(k,R,D_S).
	\end{equation}
	%with the convention that
	%\(D_{\mathrm{crit}}^{\mathrm{NODAR}}(R,D_S)=0\) if
	%\(\{k\neq0:m(k)>0\}=\emptyset\).
	Then the following statements hold.
	\begin{enumerate}
		\item[(1)]
		With assumption \eqref{eq:gamma-negative}, the homogeneous
		equilibrium is linearly unstable.
		
		\item[(2)] With assumption \eqref{eq:gamma-positive}, if
		\(D_\rho>D_{\mathrm{crit}}^{\mathrm{NODAR}}(R,D_S)\), then the homogeneous
		equilibrium is linearly stable.
		
		\item[(3)] With assumption \eqref{eq:gamma-positive}, if
		\(0<D_\rho<D_{\mathrm{crit}}^{\mathrm{NODAR}}(R,D_S)\), then the homogeneous
		equilibrium is linearly unstable. Moreover, the unstable modes are contained
		in a bounded set of wavenumbers, and \(\lambda_+\) attains its maximum at
		some
		\[
		k^\ast\in\arg\max_{k\neq0}\lambda_+(k).
		\]
		
	\end{enumerate}
\end{theorem}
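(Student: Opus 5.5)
The plan is to reduce the sign of the dominant branch $\lambda_+(k)$ to the sign of a single scalar quantity. Then all three statements become comparisons between $D_\rho$ and $D_{\rho,\mathrm{crit}}(k,R,D_S)$.

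\textbf{Reduction.} For each $k\neq0$, write the matrix of \eqref{eq:accd-eigsyst-nd} as $M(k)=\begin{pmatrix} a & c\\ \theta & -b\end{pmatrix}$, with $a,b,c$ from \eqref{def.abc}. Here $b(k)>0$ and $c(k)\theta\ge0$, so the discriminant in \eqref{eq:dispersion-accd-nd} is nonnegative and both eigenvalues are real. We have $\operatorname{tr}M=a-b$ and $\det M=-(ab+c\theta)$. I will show that $\lambda_+(k)>0$ if and only if $\det M<0$.
\begin{itemize}
\item If $\det M<0$, the two real eigenvalues have opposite signs, so $\lambda_+(k)>0$.
\item If $\det M\ge0$, then $-ab\ge c\theta\ge0$, which forces $a\le0$. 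Hence $\operatorname{tr}M<0$, both eigenvalues are nonpositive, and $\lambda_+(k)\le0$.
\end{itemize}
Dividing by $b>0$, instability of mode $k$ is therefore equivalent to $a+c\theta/b>0$. Inserting \eqref{def.abc} and $\bar A=A^0+\theta\rho_0/\omega$, the key identity is
\[
a(k)+\frac{c(k)\theta}{b(k)}=\frac{\rho_0}{Z_R}m(k)-D_\rho|k|^2\,\Gamma(k,D_S),
\]
with $\Gamma$ as in \eqref{eq:gamma-accd}.

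\textbf{Case I.} Under \eqref{eq:gamma-negative}, there are modes with $\Gamma(k,D_S)<0$; at such a $k$ the term $-D_\rho|k|^2\Gamma$ is strictly positive. By Lemma~\ref{lem:accd-small-large-k}, $m(k)=\frac{\pi R^4}{4}|k|^2+\mathcal O(|k|^4)>0$ near $k=0$. It then suffices to exhibit one admissible $k\neq0$ with both $\Gamma<0$ and $m\ge0$, which gives $\lambda_+(k)>0$ and proves (1).

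This is the step I expect to be delicate. On $\mathbb T^2$ the wavenumbers are restricted to $2\pi\mathbb Z^2$, so the interval of $|k|$ in \eqref{eq:gamma-negative} must actually contain such a lattice point with $m(k)\ge0$. I would first check that the first shell $|k|=2\pi$ lies in this range under the stated assumption (for instance when $D_S$ is small). Failing that, I would state (1) for the continuum of wavenumbers, or for a sufficiently large domain, and flag the lattice caveat explicitly.

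\textbf{Case II.} Under \eqref{eq:gamma-positive} we have $0<\Gamma<1$ for all $k$.
\begin{itemize}
\item Modes with $m(k)\le0$ give $a+c\theta/b\le -D_\rho|k|^2\Gamma<0$, so they are stable.
\item For modes with $m(k)>0$, dividing by $|k|^2\Gamma>0$ shows that $a+c\theta/b>0$ if and only if $D_\rho<D_{\rho,\mathrm{crit}}(k,R,D_S)$.
\end{itemize}

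Before using the supremum, I will check that $D_{\mathrm{crit}}^{\mathrm{NODAR}}$ is finite. In Case II, $\Gamma(k)\ge\Gamma(0)=1-2\rho_0\theta/(\bar A\omega)>0$. Also $m(k)/|k|^2$ is bounded: it tends to $\pi R^4/4$ as $k\to0$, and $m$ is bounded by \eqref{eq:mk}.

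With finiteness in hand:
\begin{itemize}
\item If $D_\rho>D_{\mathrm{crit}}^{\mathrm{NODAR}}$, every mode has $\lambda_+(k)<0$. Together with $\lambda_-\le\lambda_+$ this gives linear stability, which is (2).
\item If $D_\rho<D_{\mathrm{crit}}^{\mathrm{NODAR}}$, the definition of the supremum gives some $k$ with $m(k)>0$ and $D_\rho<D_{\rho,\mathrm{crit}}(k)$, hence $\lambda_+(k)>0$. This is the instability in (3).
\end{itemize}

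For boundedness of the unstable set, I use that $m(k)=\mathcal O(|k|^{-1/2})$ and $\Gamma\to1$ as $|k|\to\infty$. Hence $D_{\rho,\mathrm{crit}}(k)\to0$, and Lemma~\ref{lem:accd-small-large-k}(iii) already gives $\lambda_+<0$ for large $|k|$. The unstable modes therefore lie in a ball of wavenumbers, which contains only finitely many lattice points. Since $\lambda_+$ is positive somewhere and negative outside this finite set, its maximum over $k\neq0$ is attained at some $k^\ast$. In the continuum setting the same conclusion follows from continuity of $\lambda_+$ on a compact annulus.
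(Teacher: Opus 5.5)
Your proposal is correct and follows the paper's proof. Both arguments reduce instability of mode $k$ to $F_k=a(k)b(k)+c(k)\theta>0$: you do this via trace and determinant, the paper via $\lambda_-(k)<0$ and $\lambda_+\lambda_-=-F_k$. Both then write $F_k/b(k)=\frac{\rho_0}{Z_R}m(k)-D_\rho|k|^2\Gamma(k,D_S)$, use small $|k|$ in Case~I, use monotonicity in $D_\rho$ and the definition of the supremum in Case~II, and finish with high-frequency damping and discreteness of the wavevectors.

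Your lattice caveat in Case~I is justified and points to a real defect in the paper's statement. The paper's proof argues with arbitrarily small $|k|$, which is not available on $2\pi\mathbb Z^2$. On the torus, statement (1) needs an extra hypothesis, for example $4\pi^2D_S<2\rho_0\theta/\bar A-\omega$ together with $m(k)\ge0$ on the shell $|k|=2\pi$. Without it the claim can fail: when $4\pi^2D_S>2\rho_0\theta/\bar A-\omega$, every admissible mode has $\Gamma(k,D_S)>0$, and a sufficiently large $D_\rho$ makes the equilibrium linearly stable.
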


\begin{proof}
	Fix \(k\neq0\). Since \(b(k)=D_S|k|^2+\omega>0\), we first have
	\(\lambda_-(k)<0\). Indeed,
	\[
	\sqrt{(a(k)+b(k))^2+4c(k)\theta}\ge |a(k)+b(k)|,
	\]
	and therefore
	\[
	\lambda_-(k)
	=
	\frac12\Big(a(k)-b(k)
	-\sqrt{(a(k)+b(k))^2+4c(k)\theta}\Big)<0 .
	\]
	Hence linear instability can only occur through the branch \(\lambda_+(k)\).
	
	It is observed that \(\lambda_+(k)\lambda_-(k)=-F_k(D_\rho)\), 
	\begin{equation*}
		\begin{aligned}
			F_k(D_\rho):=a(k)b(k)+c(k)\theta 
			&=
			\left(\frac{\rho_0}{Z_R}m(k)-D_\rho |k|^2\right)b(k)
			+
			2D_\rho\frac{\rho_0\theta}{\bar A}|k|^2   \\
			&=
			\left(
			\frac{\rho_0}{Z_R} \frac{m(k)}{|k|^2} \frac{1}{\Gamma(k,D_S)}
			-
			D_\rho
			\right)\Gamma(k,D_S)|k|^2 b(k).
		\end{aligned}
	\end{equation*}
	As \(\lambda_-(k)<0\), we have
	\[
	\lambda_+(k)>0
	\quad\Longleftrightarrow\quad
	F_k(D_\rho)>0 .
	\]

	For Case I, $\Gamma(k,D_S) < 0$ for small $|k|$. Since 
	\[
	m(k)
	=
	\frac{\pi R^4}{4}|k|^2
	+
	\mathcal O(|k|^4),
	\]	
	$m(k)$ is positive for small $|k|$ and consequently 
	\[
	\frac{\rho_0}{Z_R}m(k)-D_\rho\Gamma(k,D_S)|k|^2 > 0	\Rightarrow F_k(D_\rho) > 0,
	\]
	implying the homogeneous equilibrium is linearly unstable.

	For Case II, $0 < \Gamma(k,D_S) < 1$. We now distinguish two cases.
	
	\begin{enumerate}
		\item[(i)] For $k \in \{m(k)\le0\}$, 
		\(F_k(D_\rho)<0\), and hence
		\(\lambda_+(k)<0\). Thus this mode is stable.
		
		\item[(ii)] For $k \in \{m(k)>0\}$,  \(F_k(D_\rho)\) is strictly decreasing in \(D_\rho\). Moreover, \(F_k(0)>0\), and the unique
		positive root of \(F_k(D_\rho)=0\) is exactly
		\(D_{\rho,\mathrm{crit}}(k,R,D_S)\) defined in \eqref{eq:Dk-accd}. Therefore,
		for this mode,
		\[
		\lambda_+(k)>0
		\quad\Longleftrightarrow\quad
		0<D_\rho<D_{\rho,\mathrm{crit}}(k,R,D_S).
		\]
	\end{enumerate}
	
	Now assume \(D_\rho>D_{\mathrm{crit}}^{\mathrm{NODAR}}(R,D_S)\). If
	\(m(k)\le0\), the preceding argument gives \(\lambda_+(k)<0\). If
	\(m(k)>0\), then by the definition \eqref{eq:Dcrit-global},
	$D_\rho>D_{\rho,\mathrm{crit}}(k,R,D_S)$,
	and hence \(\lambda_+(k)<0\). Thus \(\lambda_+(k)<0\) for every
	\(k\neq0\), which proves linear stability.
	
	Conversely, if
	\(0<D_\rho<D_{\mathrm{crit}}^{\mathrm{NODAR}}(R,D_S)\), then by
	\eqref{eq:Dcrit-global} there exists a mode \(k\neq0\) with \(m(k)>0\)
	such that
	$D_\rho<D_{\rho,\mathrm{crit}}(k,R,D_S)$.
	For this mode, \(\lambda_+(k)>0\), and the homogeneous equilibrium is
	linearly unstable.
	
	Finally, by Lemma~\ref{lem:accd-small-large-k}, high-frequency modes are
	damped. Hence the unstable modes are contained in a bounded set of
	wavenumbers. Since the admissible wavevectors form a discrete set, the
	maximum of \(\lambda_+(k)\) is attained at some
	\(k^\ast\in\arg\max_{k\neq0}\lambda_+(k)\).
\end{proof}

%The diffusion-threshold criterion below is formulated in the regime
%\begin{equation}\label{eq:gamma-positive}
%	\Gamma(k,D_S)>0,\qquad k\neq0, 
%\end{equation}
%Equivalently,
%\[
%D_S|k|^2+\omega>
%\frac{2\rho_0\theta}{\bar A},
%\qquad k\neq0,
%\]
%so that \(D_\rho\) remains a stabilizing diffusion parameter in the
%linearized mode-wise criterion. In this regime, for each mode \(k\neq0\)
%with \(m(k)>0\), define

Apart from the instability threshold, the dispersion relation
also selects a dominant wavenumber associated with the fastest-growing
mode, which provides a characteristic spatial scale for the emerging
patterns in the linear regime.

\begin{remark}[Characteristic wavelength]\label{remark:L}
	Whenever the homogeneous equilibrium is unstable, the dominant branch of the dispersion relation admits a maximizer 
	$k^\ast \in \arg\max_{k\neq 0}\lambda_+(k)$
	(see Theorem~\ref{thm:accd-DR-threshold}). 
	This wavenumber determines a characteristic wavelength
	\begin{equation}
		\label{eq:lstar_def}
		\ell^\ast(R)=\frac{2\pi}{|k^\ast|},
	\end{equation}
	which represents the spatial scale of the fastest-growing mode selected by the linear instability.
\end{remark}

The analytical characterization provided by Theorem~\ref{thm:accd-DR-threshold} 
further enables a phase-diagram representation in the $(R,D_\rho)$-plane.
For fixed model parameters, we compute the dominant growth rate
\[
\lambda_+^{\max}(R,D_\rho)
:= \max_{k\neq 0} \lambda_+(k,R,D_\rho),
\]
and evaluate it over the parameter plane.
The resulting phase diagram, shown in
Fig.~\ref{fig:lambda_phase_diagrams}, visualizes the stability
structure predicted by the dispersion relation.
In particular, the global diffusion threshold
$D_{\mathrm{crit}}^{\mathrm{NODAR}}(R,D_S)$ separates the linearly stable
and unstable regimes. This boundary marks a transition between a spatially homogeneous distribution, 
which can be interpreted as a consensus state, and regimes where spatial clustering 
and non-consensus emerge.

\begin{figure}[h!]
	\centering
	\vspace{-7pt}
	\begin{subfigure}{0.5\textwidth}
		\centering
		\includegraphics[width=\linewidth]{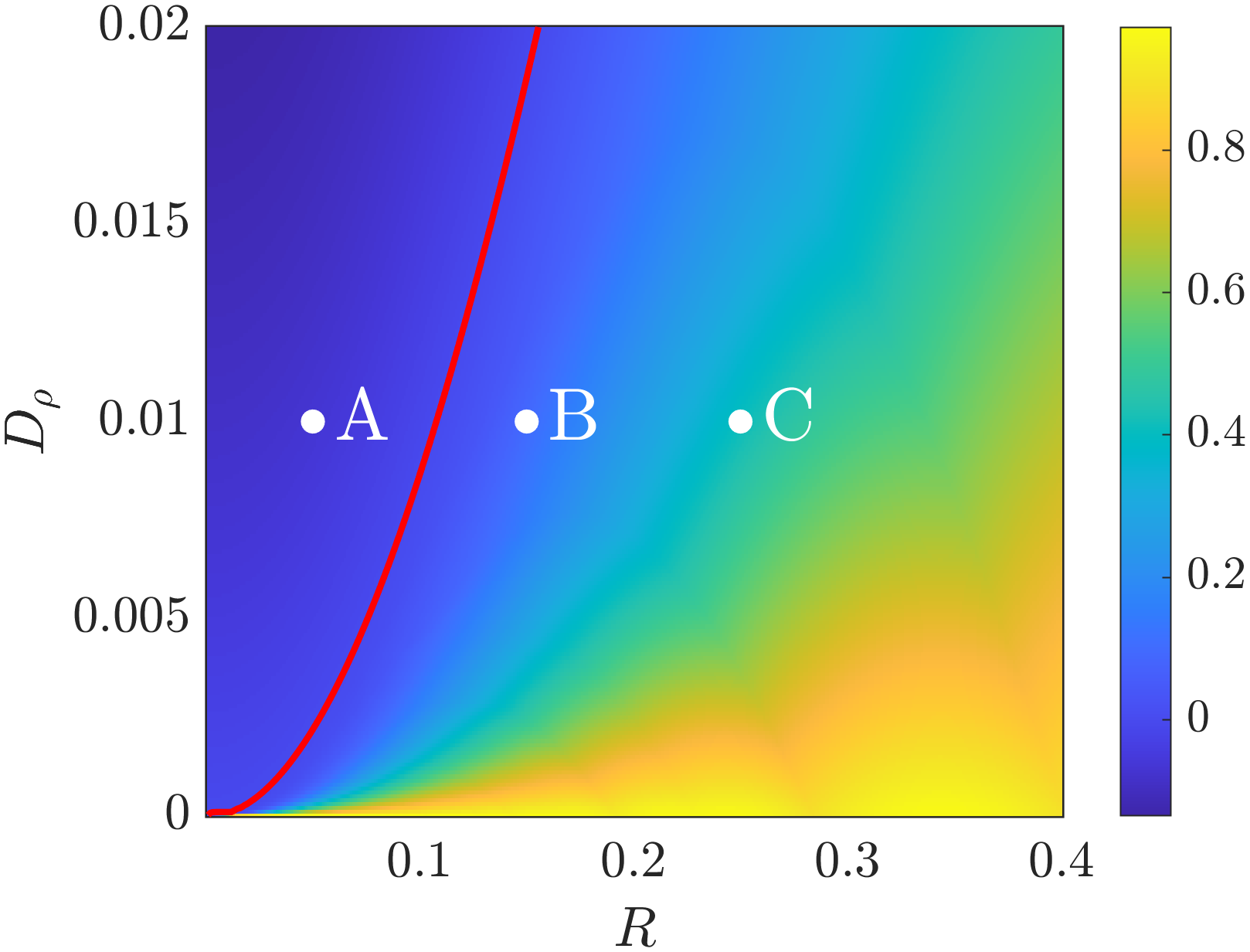}
		\label{fig:lambda_I0}
	\end{subfigure}
	\vspace{-15pt}
	\caption{
		Phase diagram of $\lambda_+^{\max}$ in the $(R,D_\rho)$-plane. The red curve shows the threshold $D_{\mathrm{crit}}^{\mathrm{NODAR}}(R,D_S)$ given by Theorem~\ref{thm:accd-DR-threshold}. Other parameters follow Fig.~\ref{fig:dispersion_example}.
	}
	\label{fig:lambda_phase_diagrams}
	\vspace{-5pt}
\end{figure}

\subsubsection{Effect of the attention diffusion coefficient $D_S$}
%The above analysis characterizes the instability threshold and the
%associated pattern scale for a fixed value of the attention diffusion
%coefficient $D_S$, as reflected in the phase diagram in the
%$(R,D_\rho)$-plane.
%We now investigate how this is modified when $D_S$ varies,
%and in particular how $D_S$ influences the instability
%threshold.
We now investigate how $D_S$ influences the instability threshold in Theorem~\ref{thm:accd-DR-threshold}, depending on the attention diffusion coefficient $D_S$ through the factor
$\Gamma(k,D_S)$ defined in \eqref{eq:gamma-accd}.
%\[
%\Gamma(k,D_S)
%=
%1-\frac{2\rho_0\theta}{\bar A(D_S|k|^2+\omega)}.
%\]
For each fixed mode \(k\neq 0\) with \(m(k)>0\), and within the regime
\(\Gamma(k,D_S)>0\), the quantity $\Gamma(k,D_S)$ is
increasing in $D_S$, and hence the threshold
$D_{\rho,\mathrm{crit}}(k,R,D_S)$ decreases for larger $D_S$, shrinking the unstable region.
%This implies that the effective instability threshold
%$D_{\mathrm{crit}}^{\mathrm{NODAR}}(R,D_S)$ decreases for larger $D_S$,
%implying that the unstable region becomes smaller.

This coincides with our intuition:
Increasing $D_S$ enhances diffusion in the attention equation,
which smooths spatial variations of the attention field and weakens
the density-attention coupling, thereby suppressing localized
aggregation. In contrast, for small $D_S$, attention remains more
localized, leading to stronger coupling and a larger unstable region.

%In the large-$D_S$ regime, $D_S|k|^2+\omega \sim D_S|k|^2$ as
%$D_S\to\infty$, so that the dependence of $\Gamma(k,D_S)$ on $D_S$
%becomes weak and the critical threshold approaches an asymptotic
%regime.

\begin{figure}[h!]
	\centering
	\includegraphics[width=0.6\textwidth]{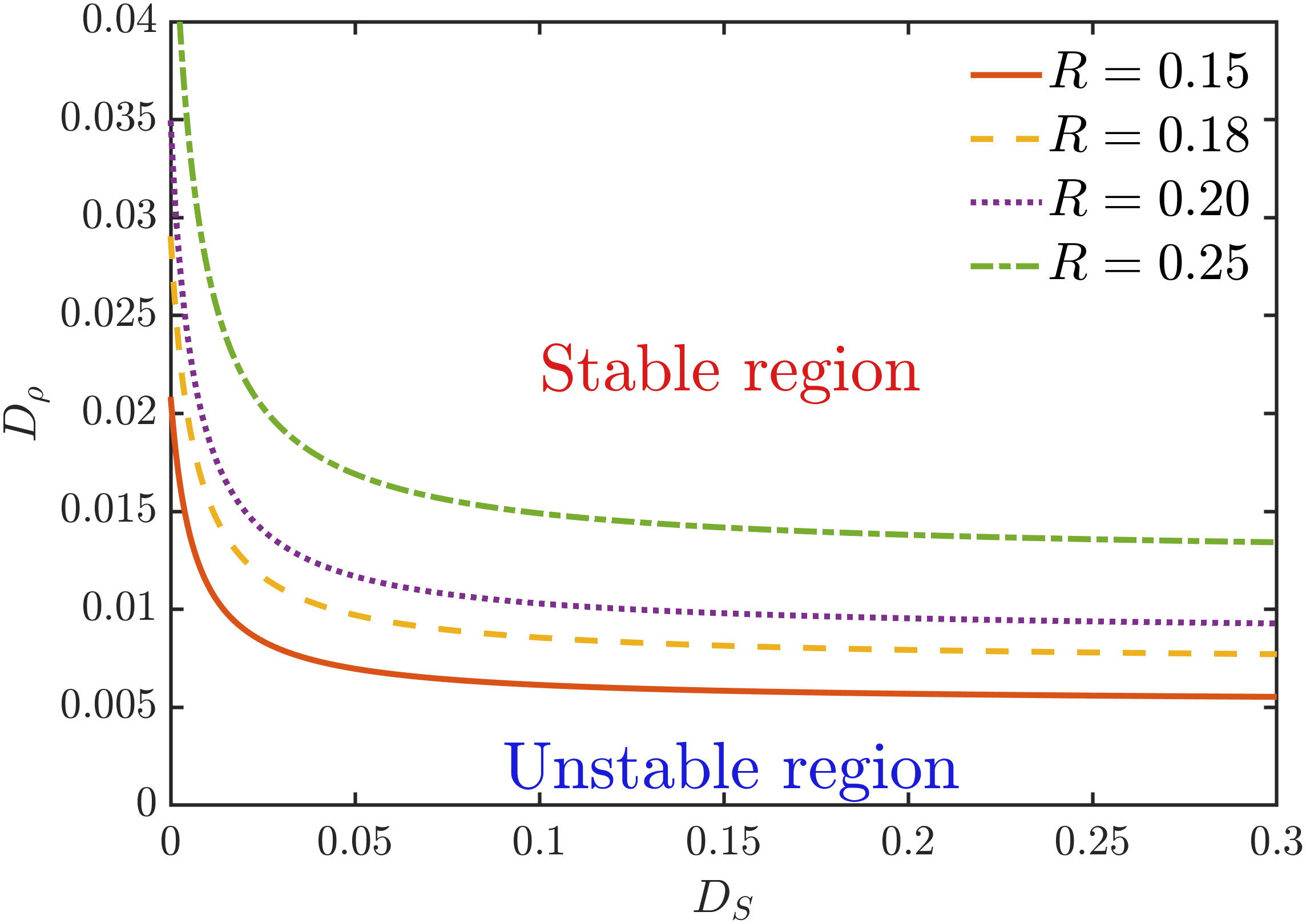}
	\caption{
		Critical curves $\lambda_+^{\max}(D_S,D_\rho;R)=0$ in the
		$(D_S,D_\rho)$-plane for several values of $R$.
		The region above each curve is stable, while the region below is
		unstable.
	}
	\label{fig:lambda_phase_DS}
\end{figure}

To further illustrate these effects, Fig.~\ref{fig:lambda_phase_DS} shows the
critical curves in the $(D_S,D_\rho)$-plane for several representative
values of the interaction radius $R$. For each fixed $R$, the region
above or below the corresponding curve is linearly stable or unstable, respectively.
%corresponding to a
%spatially homogeneous distribution, which can be interpreted as a
%consensus state. 
%In contrast, the region below the curve is unstable,
%where spatial clustering and non-consensus emerge.
The figure shows that, for all displayed values of $R$,
the critical value of $D_\rho$ lifts up as $D_S$ increases
from a small level.
%, and then gradually levels off as $D_S$ becomes
%larger.
This confirms that stronger attention diffusion has a
stabilizing effect on the system.

Fig.~\ref{fig:lambda_phase_DS} also reveals the role of the interaction radius $R$.
For larger $R$, the critical curves lie higher in the
$(D_S,D_\rho)$-plane, indicating that a larger diffusion strength
$D_\rho$ is required to suppress instability. Equivalently, stronger
nonlocal interaction tends to enlarge the unstable regime. Thus,
although increasing $D_S$ stabilizes the system by dispersing attention,
this effect competes with the destabilizing influence of larger
interaction radius. In particular, when both $R$ is large and $D_S$ is
small, the system is most prone to clustering.

\subsubsection{When attention feedback is absent}

%The above results demonstrate how environmental feedback modifies the
%instability threshold. To isolate its effect, we now examine a
%baseline model in which this mechanism is absent.

When $A$ is spatially homogeneous, namely, $\nabla A = 0$,  the NODAR system
reduces to the nonlinear nonlocal Fokker-Planck equation
\eqref{eq:HK_nonlocal} for the opinion density $\rho$.
By the linearization near the homogeneous equilibrium $\rho\equiv\rho_0$, we obtain the dispersion relation
\begin{equation}\label{eq:dispersion-nd}
	\lambda(k)=\frac{\rho_0}{Z_R}m(k)-D_\rho|k|^2,
\end{equation}
so that the critical point reads
\begin{equation}\label{eq:Dk-base}
	D_{\rho,\mathrm{crit}}(k,R)
	:=
	\frac{\rho_0}{Z_R}\frac{m(k)}{|k|^2}.
\end{equation}

%where $Z_R=\rho_0\pi R^2+\varepsilon$ and
%$m(k)=2\pi R^2\left(\frac{2J_1(|k|R)}{|k|R} - J_0(|k|R) \right)$. 

The corresponding instability threshold is characterized as follows.

\begin{theorem}[Instability threshold without attention]
	\label{thm:ncd-threshold}
	Define the global threshold
	\begin{equation}\label{eq:Dk-HK}
		D_{\mathrm{crit}}(R)
		:=
		\sup_{\substack{k\neq0\\ m(k)>0}}
		\frac{\rho_0}{Z_R}\frac{m(k)}{|k|^2}.
	\end{equation}
	%Let $\lambda(k)$ be given by \eqref{eq:dispersion-nd}. 
	The homogeneous equilibrium is linearly stable for
	$D_\rho>D_{\mathrm{crit}}(R)$ and unstable for
	$0<D_\rho<D_{\mathrm{crit}}(R)$.
\end{theorem}

The dispersion relation and the corresponding phase structure are
shown in Fig.~\ref{fig:ncdf_dispersion_and_threshold}. A finite band of
unstable modes appears when $D_\rho<D_{\mathrm{crit}}(R)$, leading to
aggregation. The phase diagram in the $(R,D_\rho)$-plane shows that
the curve $D_{\mathrm{crit}}(R)$ separates the stable regime from the
finite-wavelength instability regime.

\begin{figure}[htbp]
	\centering
	\begin{subfigure}{0.47\textwidth}
		\centering
		\includegraphics[width=\linewidth]{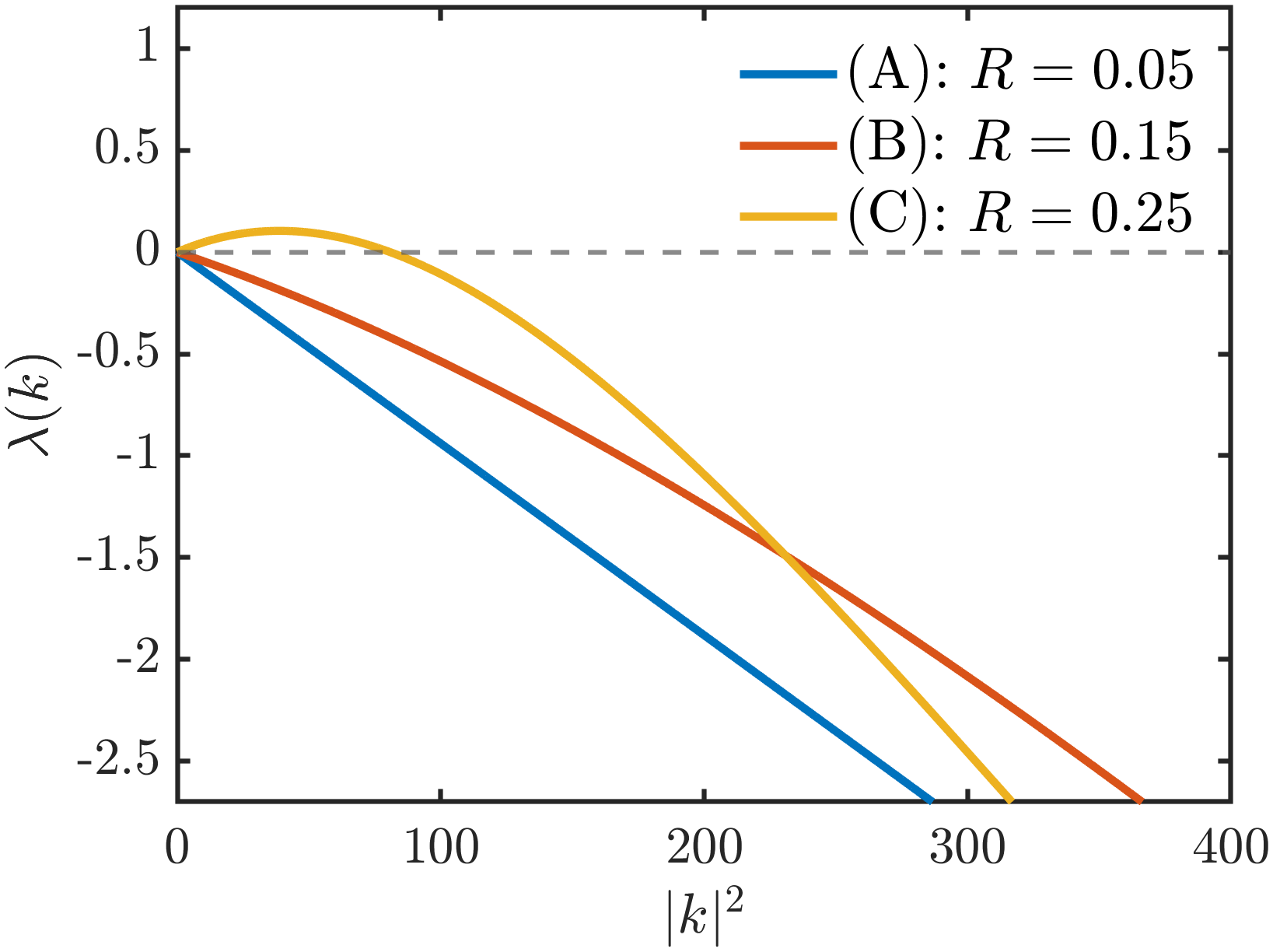}
	\end{subfigure}
	\hfill
	\begin{subfigure}{0.47\textwidth}
		\centering
		\includegraphics[width=\linewidth]{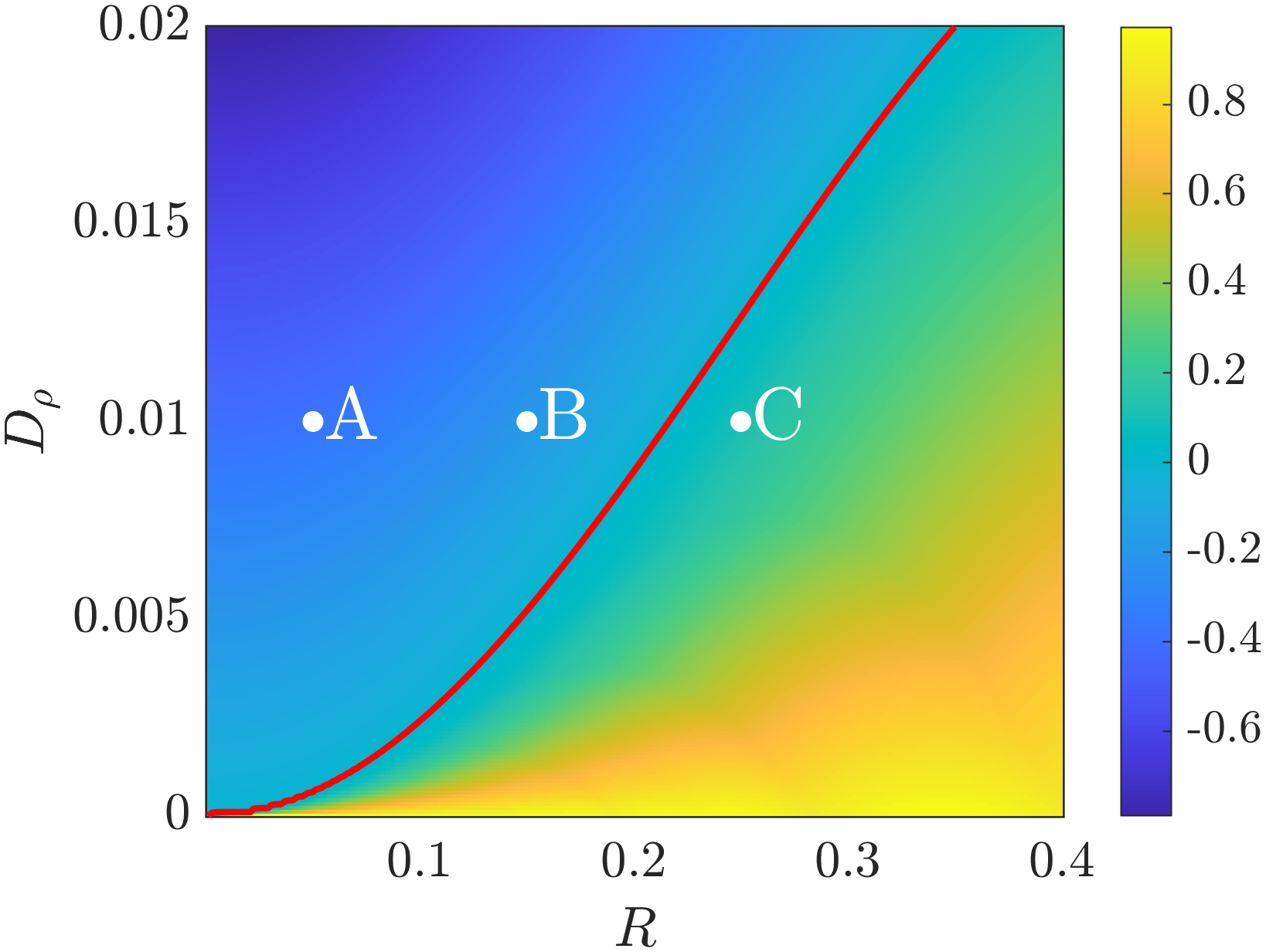}
	\end{subfigure}
	\caption{
		Left: dispersion relation $\lambda(k)$.
		Right: phase diagram of $\lambda_{\max}(R,D_\rho)$ in the
		$(R,D_\rho)$-plane. The red curve indicates the threshold
		$D_{\mathrm{crit}}(R)$.
	}
	\label{fig:ncdf_dispersion_and_threshold}
\end{figure}

This result shows that, even without attention feedback, instability is
driven by the competition between nonlocal alignment and diffusion.
Comparing Eq.~\eqref{eq:Dk-base} with Eq.~\eqref{eq:Dk-accd}, it is seen that the environmental feedback enlarges the instability threshold in the opinion alignment, thereby promoting aggregation.

\subsection{Interpretation of phase transition}

The instability of the homogeneous equilibrium leads to the amplification
of spatial inhomogeneities and the formation of clustered opinion
patterns. This provides a natural interpretation of the emergence of
non-consensus states. In this regime, the opinion density does not relax
toward a spatially homogeneous distribution, but remains heterogeneous in
space, with several localized regions of high opinion activity emerging
in the patterned state. In this sense, the homogeneous equilibrium
represents a consensus state at the macroscopic level, whereas the
unstable regime corresponds to non-consensus state. To interpret the underlying mechanisms, we return to two interacting
effects: Social conformity and attention-mediated
feedback.

In the present model, the
alignment term represents the conformity effect and drives opinion
density toward regions where nearby opinion activity is already
concentrated, whereas diffusion promotes spatial mixing and
homogenization. The interaction radius $R$ characterizes the spatial range over
which conformity acts. The diffusion coefficient $D_\rho$
measures the strength of spatial mixing in the opinion density and can
be interpreted as an effective level of exposure to diverse information.

In the absence of attention feedback, instability is governed by the
competition between nonlocal alignment and diffusion (see Theorem~\ref{thm:ncd-threshold}).  As
illustrated in Fig.~\ref{fig:ncdf_dispersion_and_threshold},
sufficiently large $D_\rho$ or sufficiently small $R$ stabilizes the
homogeneous equilibrium, while $D_\rho$ below the corresponding critical
threshold amplifies spatial perturbations and leads to clustered
opinion patterns. This is consistent with recent empirical observations of socially driven motion,
where distance-dependent alignment interactions produce phase-like
transitions between different collective configurations
\cite{SarkerZhangPerryMessingerSong2026}. 

%accounts for the role of
%alignment as an organizing mechanism in collective social behavior is

%A larger $R$ means that opinion activity is
%influenced by a broader neighborhood, whereas a smaller $R$ corresponds
%to more localized interactions. 

%The associated diffusion threshold marks the point at
%which alignment overcomes diffusion and spatial aggregation begins. 

%Spatial heterogeneity is also well documented in empirical studies and
%data-driven models of public opinion and voting behavior, where
%geographic context and spatially embedded social interactions help
%explain regional differences in political attitudes and electoral
%outcomes
%\cite{BalsaBarreiro2022SocialSpace,ChuDongesRobertsonPopEleches2021SpatialPolarization,JohnstonManleyJones2016SpatialPolarization}.

The attention-mediated feedback is incorporated through the attention
field $S$. Regions of high opinion density generate stronger attention,
and this increased attention feeds back into the evolution of the opinion
density through the attention-dependent cross-diffusion term. This
creates a positive feedback loop between opinion activity and attention:
High-density regions attract more attention, and the resulting attention
reinforcement further enhances local concentration. The attention diffusion coefficient $D_S$ controls the spatial spread of
attention. For  small $D_S$, attention remains localized and the feedback is strong. Consequently, it raises the level
of diffusion required to suppress aggregation,  enlarges the
unstable regime and makes non-consensus states more likely to appear. This may account for the echo chamber effects,
where repeated exposure and local reinforcement can amplify the
opinion activity
\cite{CinelliMoralesGaleazziQuattrociocchiStarnini2021,RaoufiHamannRomanczuk2025}.

\section{Computer simulations}\label{sec:simulation}

To further support the theoretical predictions and to illustrate
the pattern formation induced by the interplay between nonlocal
aggregation, diffusion, and attention feedback,  numerical simulations of the NODAR system 
\eqref{eq:HK-S-scaled} are performed.

All the simulations are carried out on the periodic domain
$\Omega=[0,1]^2$ using a Fourier pseudo-spectral discretization in
space combined with an IMEX Runge-Kutta time integrator.
We first illustrate our numerical scheme, and then report the phase
transition behavior and the role of nonlocal aggregation.
%-------------
\subsection{Structure-preserving IMEX spectral method}

The coupled system can be rewritten in a compact form
\[
U = (\rho, S)^\top,
\quad
\partial_t U = \mathcal L U + \mathcal N(U,t),
\]
where
\[
\mathcal L =
\begin{pmatrix}
	D_\rho \Delta & 0 \\[1mm]
	0 & D_S \Delta - \omega
\end{pmatrix},
\quad
\mathcal N(U,t) =
\begin{pmatrix}
	-\nabla\!\cdot(\rho V[\rho])
	-
	D_\rho \nabla\!\cdot\!\big( \tfrac{2\rho}{A}\nabla A \big)
	\\[2mm]
	\theta \rho 
\end{pmatrix}.
\]

%\iffalse
%On the periodic domain, the Fourier pseudo-spectral method is employed
%for spatial discretisation.
%The linear operator $\mathcal L$ is diagonal in Fourier space and can
%therefore be evaluated exactly at each Fourier mode.
%All nonlinear terms in $\mathcal N$ are computed in physical space, with
%FFT transforms used to switch between representations.
%\fi

The principal computational difficulty lies in the evaluation of the
nonlocal drift $V[\rho]$. On the periodic domain, we adopt a spectral
approach and exploit the convolution structure of the disk kernel.
For $K_R(z)=\mathbf 1_{\{|z|\le R\}}$, its Fourier coefficients admit an
explicit representation in terms of Bessel functions. Denoting
$a=|k|R$, 
the corresponding spectral multipliers are $\widehat K_R(k)=2\pi R^2\frac{J_1(a)}{a}$,
Given $\widehat\rho(k)$, the denominator is evaluated by
$\widehat{K_R * \rho}(k)=\widehat K_R(k)\widehat\rho(k)$.
For the numerator in \(V[\rho]\), the Fourier transform is computed as
\[
\widehat{z_jK_R}(-k)\widehat\rho(k)
=
i\,k_j\,2\pi R^4
\frac{2J_1(a)-aJ_0(a)}{a^3}\widehat\rho(k),
\quad j=1,2.
\]
The corresponding quantities are then obtained by inverse FFT.
%The transport term $-\nabla\!\cdot(\rho V[\rho])$ is then computed by
%forming the flux $\rho V[\rho]$ in physical space and evaluating its
%divergence spectrally. 
For small $a$, Taylor expansions are used to
avoid cancellation errors. This reduces the computational cost to $\mathcal O(N^2\log N)$ and avoids
direct evaluation of the double integral.

Time integration is carried out using the third-order IMEX Runge-Kutta
scheme ARS$(2,3,3)$ \cite{AscherRuuthSpiteri1997}, in which the diffusive
terms are treated implicitly, while the transport and nonlocal interaction
terms are handled explicitly. 

Let $\gamma=\frac{3+\sqrt{3}}{6}$ and denote by
$U^n\approx U(t^n)$ the numerical solution at time $t^n$.
\[
\begin{cases}
	U^{(1)} = U^n + \gamma\,\Delta t\,\mathcal N(U^n,t^n),\\[2mm]
	(I-\gamma\Delta t\,\mathcal L)\,U^{(2)} = U^{(1)},\\[2mm]
	U^{(3)}
	=
	\frac{1}{\gamma} U^n
	+
	\frac{3\gamma - 2}{\gamma} U^{(1)}
	+
	\frac{1 - 2\gamma}{\gamma} U^{(2)}
	+
	2(1-\gamma)\Delta t\,\mathcal N(U^{(2)}, t^n + \gamma \Delta t),\\[2mm]
	(I-\gamma\Delta t\,\mathcal L)\,U^{(4)}=U^{(3)},\\[2mm]
	U^{n+1}
	=
	-\tfrac12 U^n
	-\tfrac{3\gamma}{2} U^{(1)}
	+\tfrac32 U^{(2)}
	+\tfrac{3(3\gamma-2)}{2} U^{(3)}
	+\tfrac{1}{2\gamma} U^{(4)}
	+
	\tfrac{\Delta t}{2}\,
	\mathcal N(U^{(4)}, t^{n+1}).
\end{cases}
\]

%Since the linear operator $\mathcal L$ is diagonal in Fourier space,
%all implicit stages reduce to scalar divisions at each Fourier mode.
%%No iterative linear solver is required.
%%The diffusive part of the scheme is $L$-stable,
%The restriction on time step is determined solely by
%the explicit nonlocal transport and cross-diffusion terms.

Because the nonlinear transport, cross-diffusion and source terms
are treated explicitly, the intermediate solution
$(\tilde\rho_h^{\,k+1}, \tilde S_h^{\,k+1})$
may violate non-negativity and the exact mass conservation.
To restore the admissible structure,
we apply a prediction-correction step based on a
Lagrange multiplier projection \cite{cheng2022new,cheng2022bound,TongFenghua2024Positivity,van2019positivity,WangXiongZhang2025} onto the constraint set
after each timestep. 
The corrected solution
$(\rho_h^{k+1}, S_h^{k+1})$
is obtained as the minimizer of the constrained optimization problem
\[
\frac12\|\rho_h^{k+1}-\tilde\rho_h^{\,k+1}\|_{L^2}^2
+
\frac12\|S_h^{k+1}-\tilde S_h^{\,k+1}\|_{H^1}^2
\]
subject to
\[
\rho_h^{k+1}\ge0,
\quad
S_h^{k+1}\ge0,
\quad
\langle \rho_h^{k+1},1\rangle
=
\langle \rho_h^{0},1\rangle.
\]
The $L^2$-projection enforces positivity and mass conservation for
$\rho$, while the $H^1$ projection guarantees positivity for $S$.
Strict convexity ensures existence and uniqueness of the minimizer. 
%The optimality conditions for the
%$L^2$-$H^1$ projection are given as follows. Given the predicted solution
%$(\tilde\rho_h^{\,k+1}, \tilde S_h^{\,k+1})$,
%the corrected state
%$(\rho_h^{k+1}, S_h^{k+1})$
%is defined as the solution of the convex minimisation problem
%\begin{equation*}
%	\begin{aligned}
	%		\min_{\rho_h^{k+1},\,S_h^{k+1}\in X}
	%		\;&
	%		\frac12\Big(
	%		\|\rho_h^{k+1}-\tilde\rho_h^{\,k+1}\|_{L^2}^2
	%		+
	%		\|S_h^{k+1}-\tilde S_h^{\,k+1}\|_{H^1}^2
	%		\Big),
	%		\\
	%		\text{s.t.}\;&
	%		\rho_h^{k+1}\ge0,
	%		\quad
	%		S_h^{k+1}\ge0,
	%		\quad
	%		\langle \rho_h^{k+1},1\rangle
	%		=
	%		\langle \rho_h^{0},1\rangle .
	%	\end{aligned}
%\end{equation*}

%Since the objective functional is strictly convex
%and the admissible set is closed and convex,
%a unique minimiser exists.

The Karush-Kuhn-Tucker conditions
characterizing the projection are as follows.
There exist nonnegative grid functions
$\lambda_h^{k+1}\ge0$,
$\mu_h^{k+1}\ge0$
and a scalar multiplier
$\xi^{k+1}\in\mathbb{R}$ such that
\begin{equation*}
	\begin{aligned}
		&\rho_h^{k+1}
		=
		\tilde\rho_h^{\,k+1}
		+
		\lambda_h^{k+1}
		-
		\xi^{k+1},
		\quad (I-\Delta)\,S_h^{k+1}
		=
		(I-\Delta)\,\tilde S_h^{\,k+1}
		+
		\mu_h^{k+1},
		\\
		&\lambda_h^{k+1}\,\rho_h^{k+1} = 0,
		\quad
		\mu_h^{k+1}\,S_h^{k+1} = 0,
		\quad
		\lambda_h^{k+1}\ge0,
		\quad
		\mu_h^{k+1}\ge0,
		\\
		&\langle \rho_h^{k+1},1\rangle
		=
		\langle \rho_h^{0},1\rangle .
	\end{aligned}
\end{equation*}
Here $\lambda_h^{k+1}$ and $\mu_h^{k+1}$
enforce pointwise non-negativity,
while $\xi^{k+1}$ guarantees exact mass conservation.
The operator $(I-\Delta)$ arises from
the Euler-Lagrange equation associated with the $H^1$ inner product.

%The associated optimality conditions are summarized in
%\ref{app:projection}. The proposed scheme has been validated in \ref{app:verification} through convergence tests and shown to preserve key structural properties, including non-negativity and mass conservation.

%We now investigate the dynamical behaviour of the NODAR system using the established numerical discretization. 

%We therefore focus on qualitative dynamics and on the mechanisms underlying phase transitions predicted by linear stability analysis.

\subsection{Pattern formation and wavelength selection}

Now we investigate the phase transition and the associated
pattern selection predicted by the linear stability analysis.
Numerical simulations are conducted to validate the phase diagram in
Fig.~\ref{fig:lambda_phase_diagrams} and to illustrate the emergence of
spatial patterns across different dynamical regimes, thereby confirming
the theoretical predictions.

\begin{example}\label{exa:random}
	The initial condition is constructed as a small perturbation of the
	homogeneous equilibrium \eqref{eq:equilibrium},
	\[
	\rho(x,y,0)=\rho_0+\delta(x,y), \quad
	S(x,y,0)=S_0+\delta(x,y),
	\]
	where $\delta(x,y)$ is a random field given by a superposition of
	$30$ Gaussian functions with randomly chosen centres, amplitudes,
	and widths. The computations are performed on the periodic domain $\Omega=[0,1]^2$ with spatial step size $h=1/512$ and time step $\Delta t_{max}=10^{-3}$.
\end{example}

Fig.~\ref{fig:rho_snapshots} shows snapshots of the opinion density
$\rho(x,y)$ at time $T=50$ for three representative values of the
interaction radius $R$, corresponding to the points
$\mathrm{A}$, $\mathrm{B}$, and $\mathrm{C}$ in
Fig.~\ref{fig:lambda_phase_diagrams}.
The simulations are performed with diffusion coefficient
$D_\rho = 0.01$, while the remaining parameters are the same as in
Fig.~\ref{fig:lambda_phase_diagrams}.
In the stable regime (panel A), spatial perturbations decay and the
solution remains close to the homogeneous equilibrium.
Near the threshold (panel B), small perturbations begin to grow,
indicating the onset of instability.
In the unstable regime (panel C), the solution develops clearly localized spatial clusters, reflecting the dominance of aggregation over
diffusion.
These observations are in good qualitative agreement with the theoretical phase diagram, suggesting that the linear stability analysis captures the transition threshold and provides insight into the initial pattern formation mechanism.

\begin{figure}[h!]
	\centering
	\vspace{-5pt}
	\begin{subfigure}{0.32\textwidth}
		\centering
		\includegraphics[width=\linewidth]{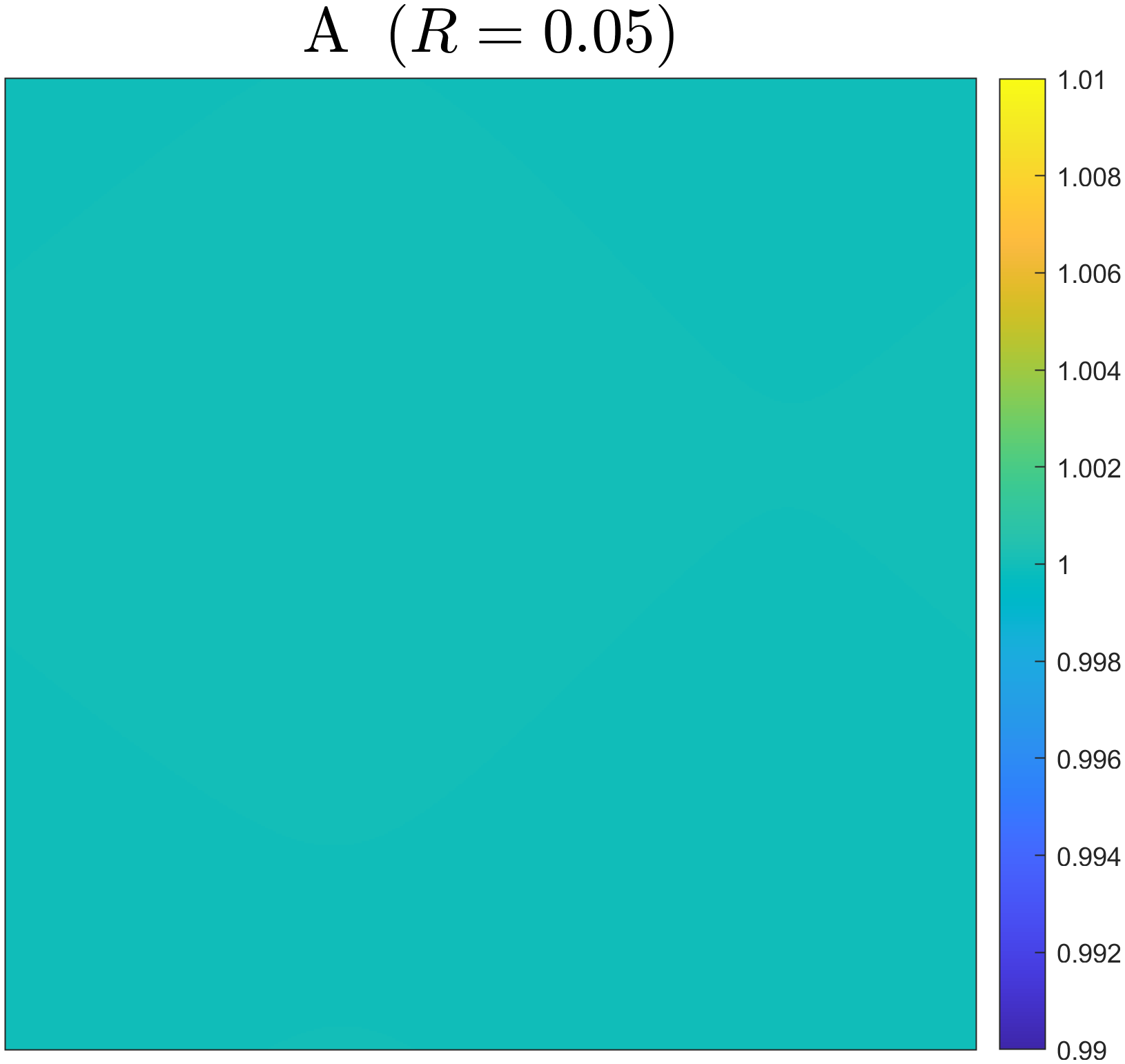}
		\label{fig:lambda_I}
	\end{subfigure}
	\hfill
	\begin{subfigure}{0.32\textwidth}
		\centering
		\includegraphics[width=\linewidth]{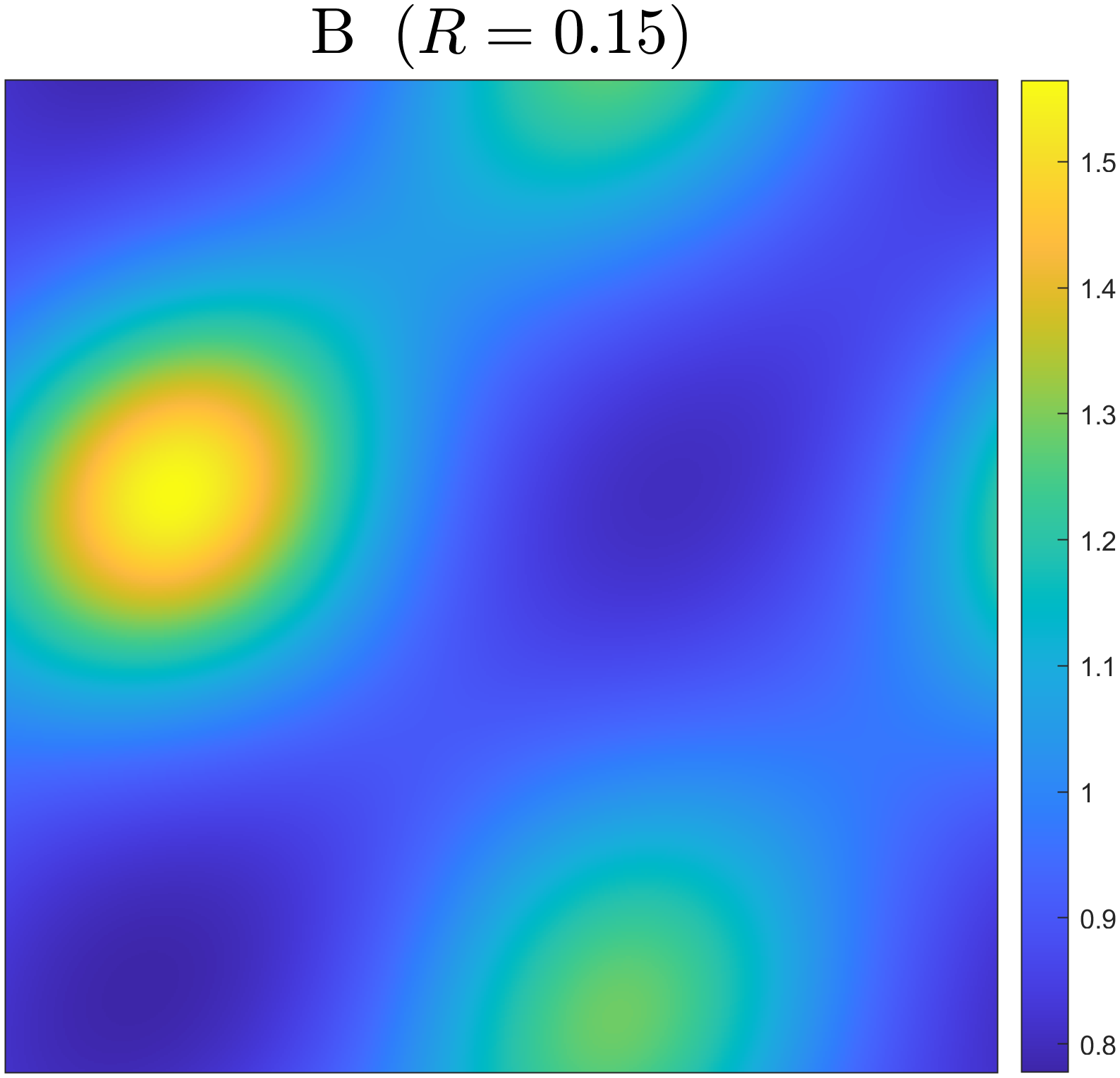}
		\label{fig:lambda_I1}
	\end{subfigure}
	\hfill
	\begin{subfigure}{0.32\textwidth}
		\centering
		\includegraphics[width=\linewidth]{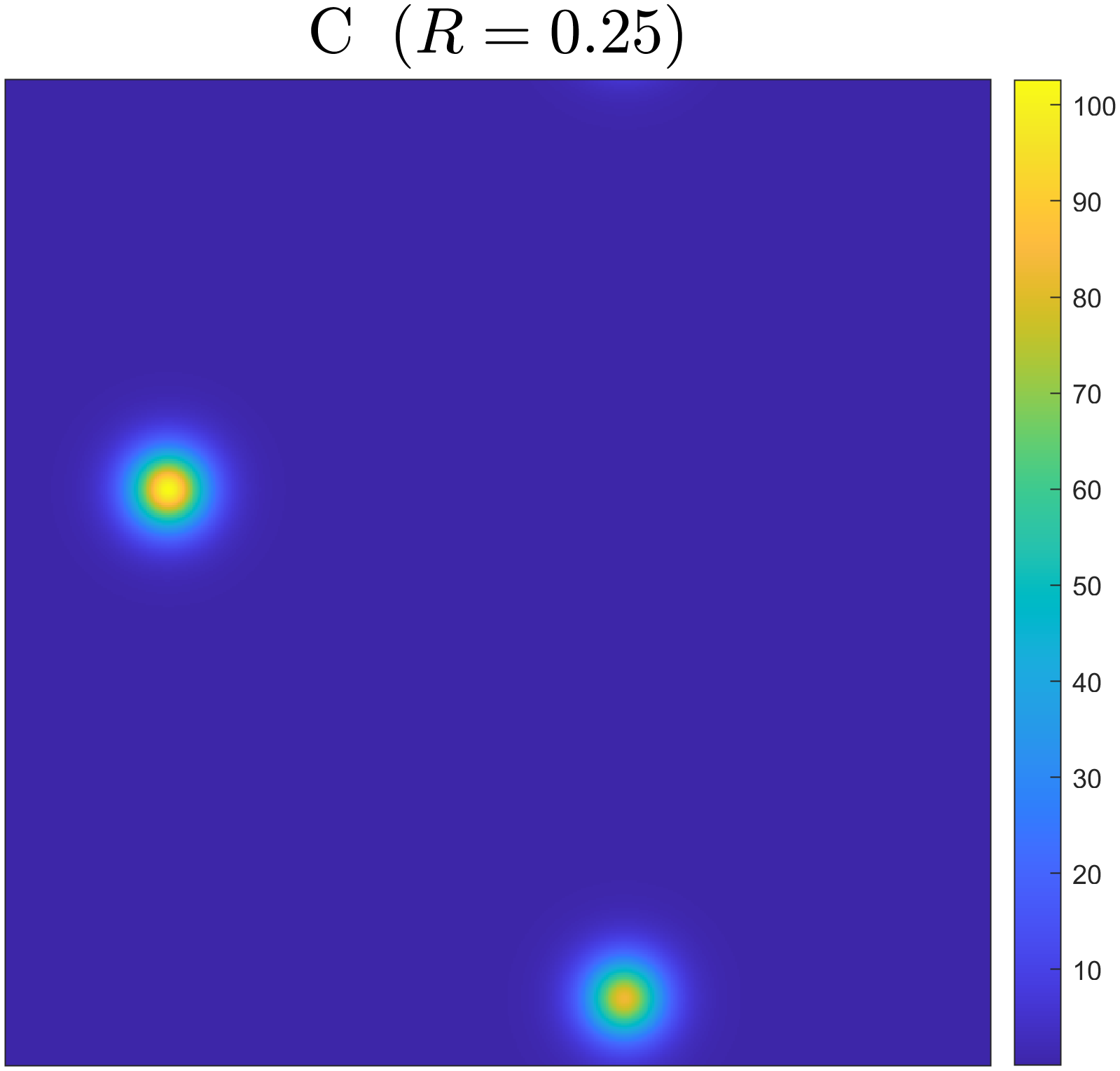}
		\label{fig:lambda_I2}
	\end{subfigure}
	\vspace{-15pt}
	\caption{
		Snapshots of the opinion density $\rho(x,y)$ at $T=50$
		for three representative interaction radii $R=0.05$, $0.15$, and $0.25$,
		corresponding to points $\mathrm{A}$-$\mathrm{C}$ in
		Fig.~\ref{fig:lambda_phase_diagrams}.
	}
	\label{fig:rho_snapshots}
	\vspace{-10pt}
\end{figure}

We next examine the dynamical evolution of the system for different interaction radii. 
Fig.~\ref{fig:rho_evolution} shows representative time snapshots for $R=0.06,0.1,0.15$ with $D_\rho = 10^{-4}$, corresponding to different dynamical regimes.
Starting from small random perturbations around the homogeneous state, weak spatial inhomogeneities emerge and are progressively amplified, leading to the formation of localized high-density regions. 
As time evolves, these regions become more pronounced and eventually organize into clusters.
We observe that the interaction radius $R$ has a significant influence on the spatial organization of the patterns: Larger values of $R$ lead to more widely spaced clusters. 
This behavior is consistent with the wavelength selection mechanism predicted by the linear stability analysis.
From a macroscopic perspective, the clustering process can be interpreted as the formation of locally coherent regions, where the opinion density becomes highly concentrated around certain spatial locations. These regions correspond to local consensus, while the coexistence of multiple clusters reflects a global non-consensus state.

\begin{figure}[h!]
	\centering
	
	\begin{subfigure}{1.0\textwidth}
		\centering
		\includegraphics[width=\linewidth]{figures/rho_R006_D1e-4.png}
	\end{subfigure}\\[0.2cm]
	\begin{subfigure}{1.0\textwidth}
		\centering
		\includegraphics[width=\linewidth]{figures/rho_R010_D1e-4.png}
	\end{subfigure}\\[0.2cm]
	\begin{subfigure}{1.0\textwidth}
		\centering
		\includegraphics[width=\linewidth]{figures/rho_R015_D1e-4.png}
	\end{subfigure}
	\caption{
		Time evolution of the opinion density $\rho(x,y,t)$ for different interaction radii 
		$R=0.06$ (top), $R=0.10$ (middle), and $R=0.15$ (bottom). 
		Snapshots are shown at $t=1,3,5,8,10,30,50$. 
		The system evolves from small perturbations of the homogeneous state to localized clusters, 
		with larger $R$ producing wider inter-cluster spacing.
	}
	\label{fig:rho_evolution}
\end{figure}

To characterize the pattern scale, we consider the wavelength $\ell^\ast(R)$ associated with the fastest-growing mode of the dispersion relation (see Remark~\ref{remark:L}).

For the disk kernel $K_R(z)=\mathbf{1}_{\{|z|\le R\}}$, introducing the rescaled variable $q=|k|R$, we obtain
$m(k)=2\pi R^2 M(q)$, $M(q)=\frac{2J_1(q)}{q}-J_0(q)$.
It follows that
\[
\ell^\ast(R)=\frac{2\pi}{|k^\ast(R)|}
=\frac{2\pi R}{q^\ast(R)},
\quad
q^\ast(R)=R|k^\ast(R)|.
\]

In the weak diffusion regime $D_\rho \ll 1$,  the dominant wavenumber is mainly governed
by the kernel-dependent function \(M(q)\), while the diffusive contribution
$-D_\rho |k|^2=-D_\rho q^2/R^2$
only weakly shifts the maximizer of the full dispersion relation.
Numerical evaluation shows that the resulting maximizer \(q^\ast(R)\)
depends only mildly on \(R\) over the range of interaction radii
considered. Consequently,
\[
\ell^\ast(R)=\frac{2\pi R}{q^\ast(R)}
\approx C R,
\]
where \(C=2\pi/q^\ast\) is an \(\mathcal O(1)\) constant determined by
the dominant mode. For the disk-kernel multiplier \(M(q)\), the first
maximum occurs near \(q_0\approx 3.05\). Since \(q^\ast(R)\) remains close
to \(q_0\) in the aggregation-dominated regime, we obtain
$\ell^\ast(R)\approx 2.06R$,
which resembles the 2R-conjecture in the agent-based modeling \cite{BlondelTsitsiklis2007}.

\begin{figure}[h!]
	\centering
	\includegraphics[width=0.48\textwidth]{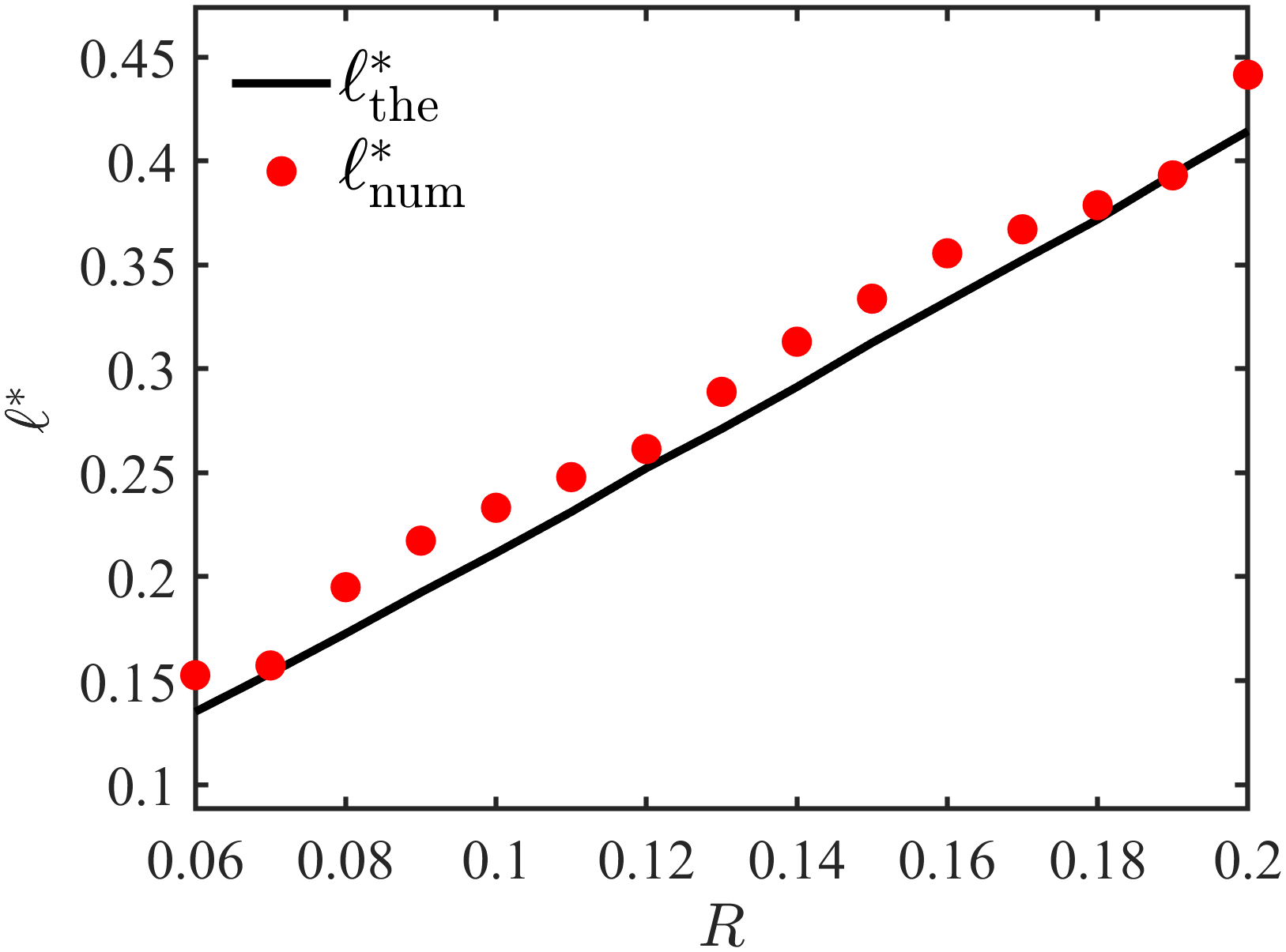}
	\includegraphics[width=0.48\textwidth]{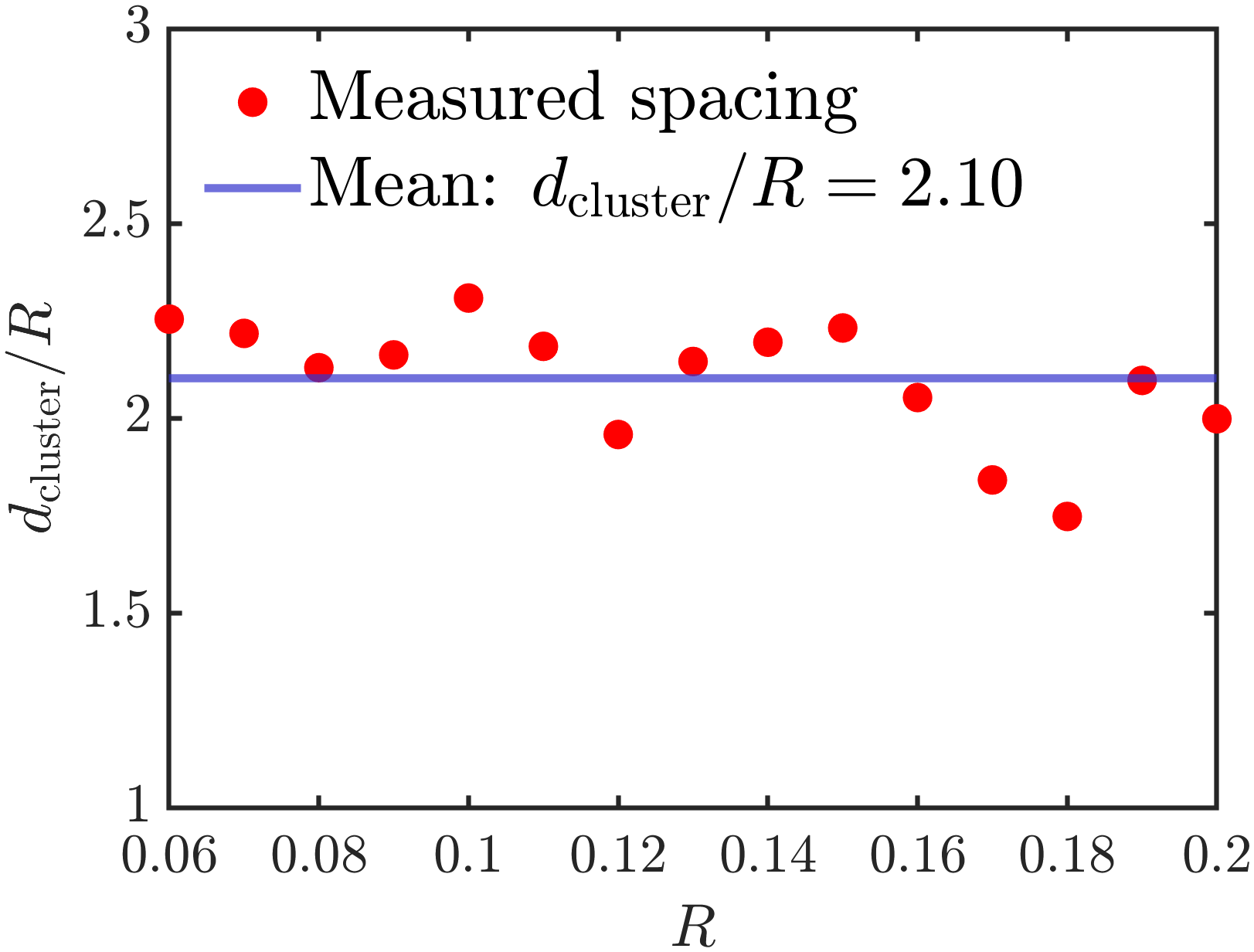}
	\caption{ Wavelength selection and cluster spacing for
		\(D_\rho=10^{-4}\).
		Left: Comparison between the theoretical prediction of the dominant wavelength $\ell^\ast(R)$ (black curve) and its numerical estimate $\ell^\ast_{\mathrm{num}}$ (red dots), showing good consistency and an approximately linear dependence on \(R\).
		Right: Cluster spacing $d_{\mathrm{cluster}}/R$. The ratio remains
		close to a constant, with mean value about \(2.10\), indicating that
		the  cluster spacing is close to \(2R\).
	}
	\label{fig:cluster_L_star}
	\vspace{-10pt}
\end{figure}

To further validate this prediction, we compare the theoretical wavelength $\ell^\ast(R)$ with numerical results extracted from the early-stage dynamics. 
As shown in Fig.~\ref{fig:cluster_L_star} (left), the numerical estimate $\ell^\ast_{\mathrm{num}}$ exhibits a nearly linear dependence on $R$ as the theoretical prediction. 
In particular, the ratio $\ell^\ast_{\mathrm{num}}/R$ remains nearly constant over the considered range of interaction radii, indicating that the dominant wavelength scales linearly with $R$, say, $\ell^\ast(R)=\mathcal{O}(R)$.

Next, we define the cluster spacing $d_{\mathrm{cluster}}$ as the average nearest-neighbour distance between local density maxima at late times. 
Fig.~\ref{fig:cluster_L_star} (right) shows the normalized spacing $d_{\mathrm{cluster}}/R$, which remains approximately constant across the same parameter range, with mean
value about \(2.10\). 
This indicates that the cluster spacing
also scales linearly with \(R\), say, $d_{\mathrm{cluster}}=\mathcal{O}(R)$.

%While $\ell^\ast(R)$ arises from linear stability analysis and reflects the early-stage growth of perturbations, $d_{\mathrm{cluster}}$ is measured from fully developed nonlinear patterns. 
%The observed agreement between these two scales indicates that the characteristic pattern size in the nonlinear regime is closely linked to the dominant wavelength selected by linear instability.
%Taken together, these observations might indicate a robust linear scaling behavior across both regimes:
%\[
%\ell^\ast(R)=\mathcal{O}(R), 
%\quad 
%d_{\mathrm{cluster}}=\mathcal{O}(R),
%\]
%with both the linear wavelength and the nonlinear cluster spacing close
%to the \(2R\) scale in the weak-diffusion aggregation regime.

This \(2R\) scaling resembles the classical 2R-conjecture in
bounded-confidence opinion dynamics, which states that
the H-K particle model with confidence radius \(R\)
typically forms final clusters separated by a distance of order \(2R\)
\cite{BlondelTsitsiklis2007,WangLiEChazelle2017}. In contrast
to the particle setting, the \(2R\) scaling in the present continuum model
is selected by the fastest-growing mode of the dispersion relation and
is further examined through the cluster spacing measured from nonlinear
simulations.

%
%\iffalse
%\begin{figure}[h!]
%	\centering
%	\vspace{-7pt}
%	\begin{subfigure}{0.4\textwidth}
	%		\centering
	%		\includegraphics[width=\linewidth]{figures/R025_1e-4.png}
	%		\caption{$R=0.25,D_\rho = 10^{-4}$}
	%	\end{subfigure}
%	\begin{subfigure}{0.4\textwidth}
	%		\centering
	%		\includegraphics[width=\linewidth]{figures/R025_4e-4.png}
	%				\caption{$R=0.25,D_\rho = 4\times10^{-4}$}
	%	\end{subfigure}
%		\begin{subfigure}{0.4\textwidth}
	%		\centering
	%		\includegraphics[width=\linewidth]{figures/R025_6e-4.png}
	%				\caption{$R=0.25,D_\rho = 6\times10^{-4}$}
	%	\end{subfigure}
%	\begin{subfigure}{0.4\textwidth}
	%		\centering
	%		\includegraphics[width=\linewidth]{figures/R025_1e-3.png}
	%				\caption{$R=0.25,D_\rho = 10^{-3}$}
	%	\end{subfigure}
%		\begin{subfigure}{0.4\textwidth}
	%		\centering
	%		\includegraphics[width=\linewidth]{figures/d_vs_D.png}
	%		\caption{$d_{avg}$ vs $D_\rho$}
	%	\end{subfigure}
%	\vspace{-3pt}
%	\caption{xxx}
%	\label{fig:xxxx}
%	\vspace{-10pt}
%\end{figure}
%\fi

\subsection{Macroscopic quantification of clustering}
\label{subsubsec:influence-S}

The linear stability analysis characterizes the onset of aggregation in
mathematical terms. To relate this phenomenon to observable patterns in
collective opinion dynamics, we next introduce several macroscopic
quantities that describe how opinion activity is distributed across the
domain. In particular, we compare the NODAR model with the
corresponding reduced model without attention feedback
(i.e., $\nabla A = 0$).
%in which the attention-dependent part of the
%cross-diffusion term vanishes and the dynamics reduce to the scalar
%nonlocal Fokker-Planck equation~\eqref{eq:HK_nonlocal}. 
Both models are solved using the same
numerical scheme.

To quantify the degree of spatial heterogeneity, we first introduce the
variance of the opinion density,
\[
\mathrm{Var}(\rho(t))
=
\frac{1}{|\Omega|}
\int_{\Omega} (\rho(x,t)-\bar{\rho}(t))^2\,dx,
\quad
\bar{\rho}(t)
=
\frac{1}{|\Omega|}\int_{\Omega}\rho(x,t)\,dx.
\]
This quantity provides a global measure of clustering: It is small
when the solution is close to a homogeneous state and increases as
spatial inhomogeneities develop.

\begin{figure}[h!]
	\centering
	\includegraphics[width=0.65\textwidth]{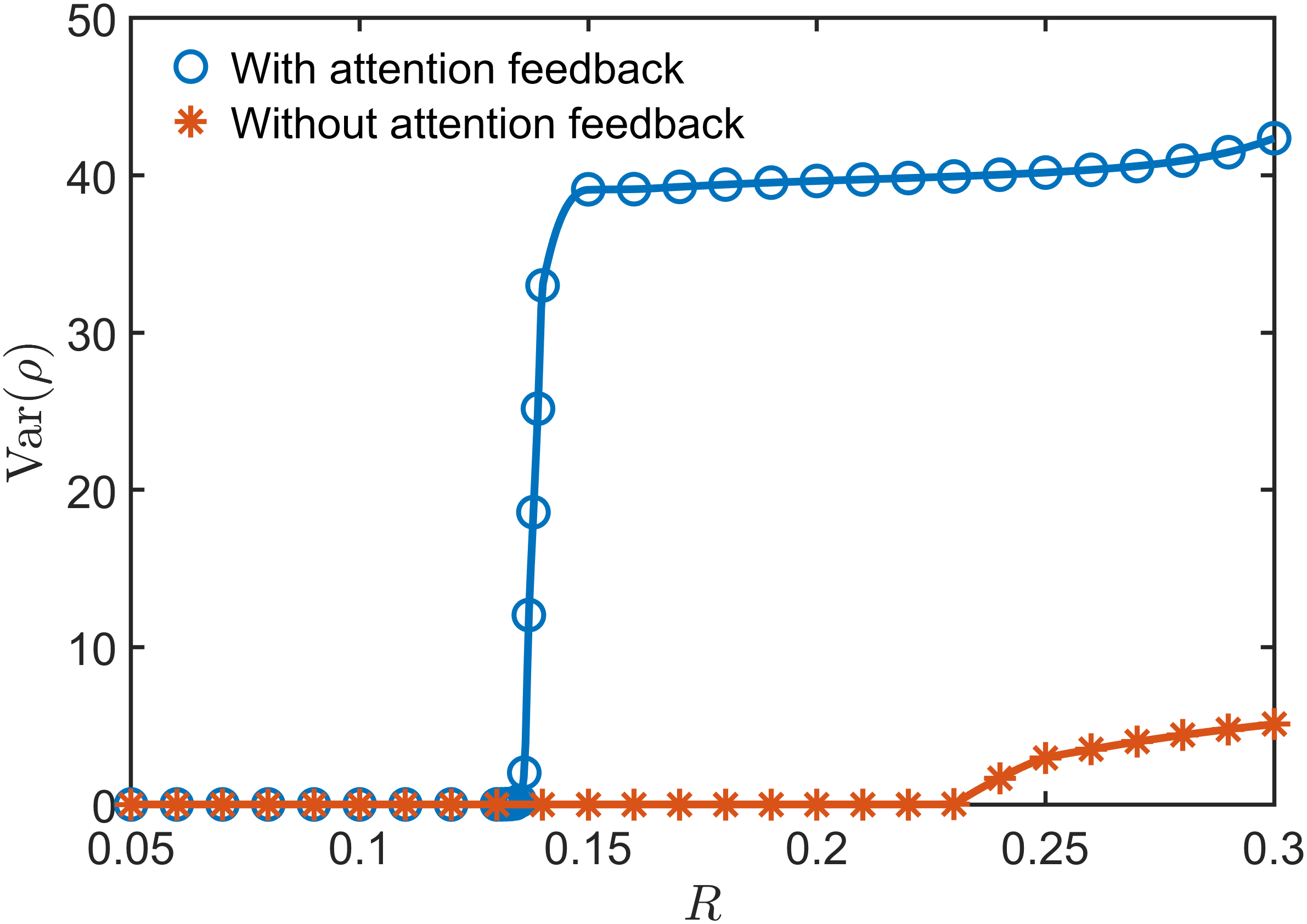}
	\caption{
		Variance $\mathrm{Var}(\rho)$ versus the interaction radius $R$ at $D_\rho =0.01$ for
		the NODAR model and the reduced model without attention feedback.
	}
	\label{fig:Var_rho_vs_R}
\end{figure}

Fig.~\ref{fig:Var_rho_vs_R} shows that, in both models, increasing $R$
drives a transition from a nearly homogeneous state to a clustered
regime, reflected by the rapid growth of $\mathrm{Var}(\rho)$.
However, the onset of clustering is observed at a smaller value of $R$, indicating an earlier onset of clustering in the
presence of attention feedback.

While the variance detects the emergence of spatial heterogeneity, it
does not describe how clusters are organized. To quantify the
internal structure of these patterns and to relate the macroscopic
indicators to the microscopic behavioral interpretation, we introduce
threshold-based macroscopic quantities.

In opinion adjustment, individual-level responses may be broadly described by three mechanisms: Keep, adopt, and compromise
\cite{ChacomaZanette2015,BalsaBarreiro2022SocialSpace}. These responses are not introduced as
separate state variables in the continuum model. Instead, their collective effect is reflected through the spatial redistribution of the opinion density $\rho(x,t)$. In particular, regions where $\rho$ becomes significantly larger than the background level may be interpreted as
macroscopic high-activity regions, where adoption effects are dominant.

Let $\sigma(t):=\sqrt{\mathrm{Var}(\rho(t))}$
denote its standard deviation. For a prescribed threshold parameter
$c>0$, we define the high-density region
\[
E_c(t):=
\{x\in\Omega:\rho(x,t)>\bar\rho(t)+c\,\sigma(t)\}.
\]
The set $E_c(t)$ extracts the high-activity core of the opinion
distribution (adopt). The complement $\Omega\setminus E_c(t)$ represents the
lower-activity background (keep or compromise).
%and is not further decomposed into keep and
%compromise responses.
Define the mass fraction and area fraction of this
high-activity core by
\[
M_c(t):=
\frac{\int_{E_c(t)} \rho(x,t)\,dx}
{\int_{\Omega} \rho(x,t)\,dx},
\quad
A_c(t):=
\frac{|E_c(t)|}{|\Omega|}.
\]
Here $M_c(t)$ measures how much of the total opinion mass is in
the high-density region, while $A_c(t)$ measures how much of the spatial
domain this region occupies.

\begin{figure}[h!]
	\centering
	\begin{subfigure}{0.48\textwidth}
		\centering
		\includegraphics[width=\linewidth]{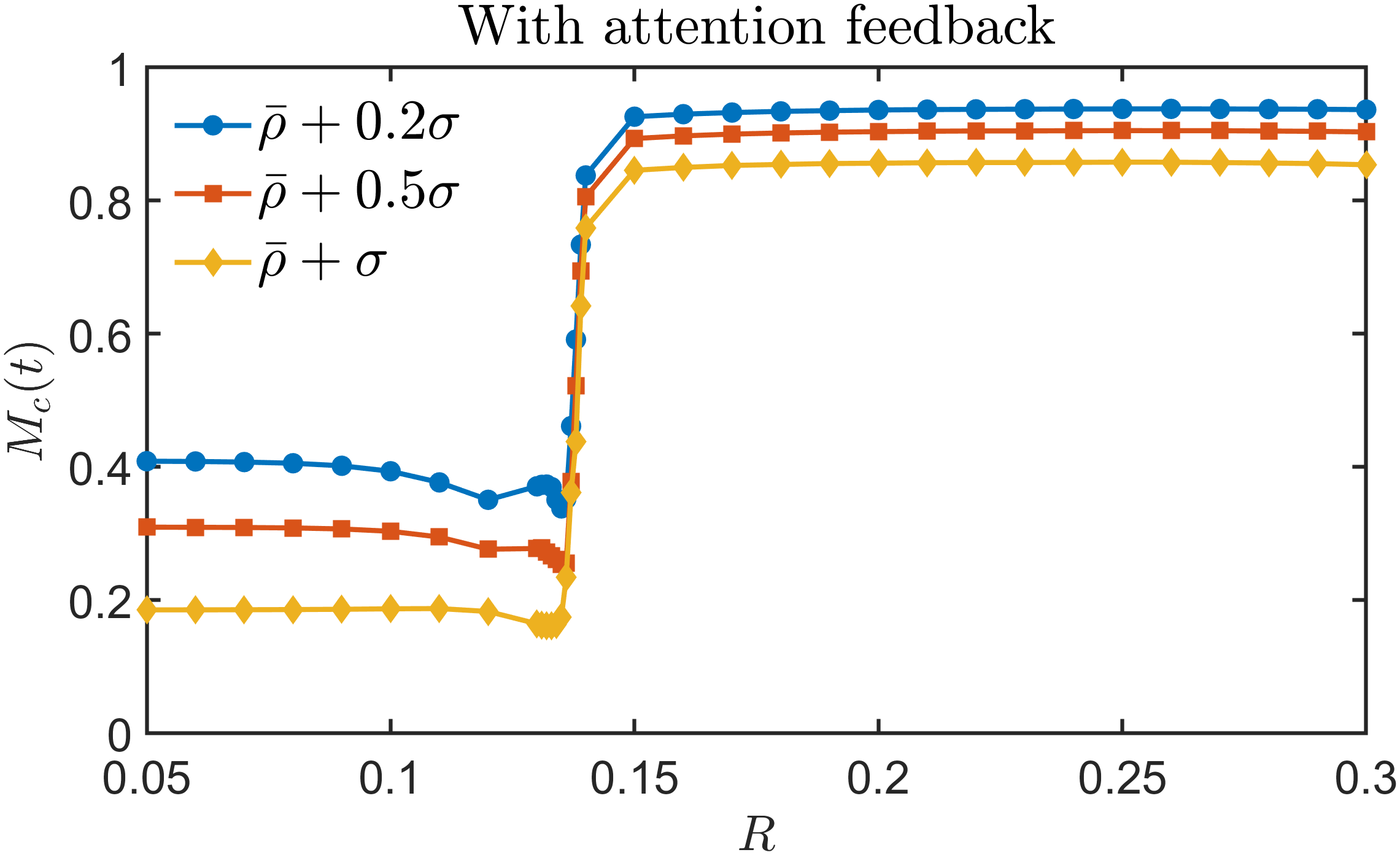}
	\end{subfigure}
	\hfill
	\begin{subfigure}{0.48\textwidth}
		\centering
		\includegraphics[width=\linewidth]{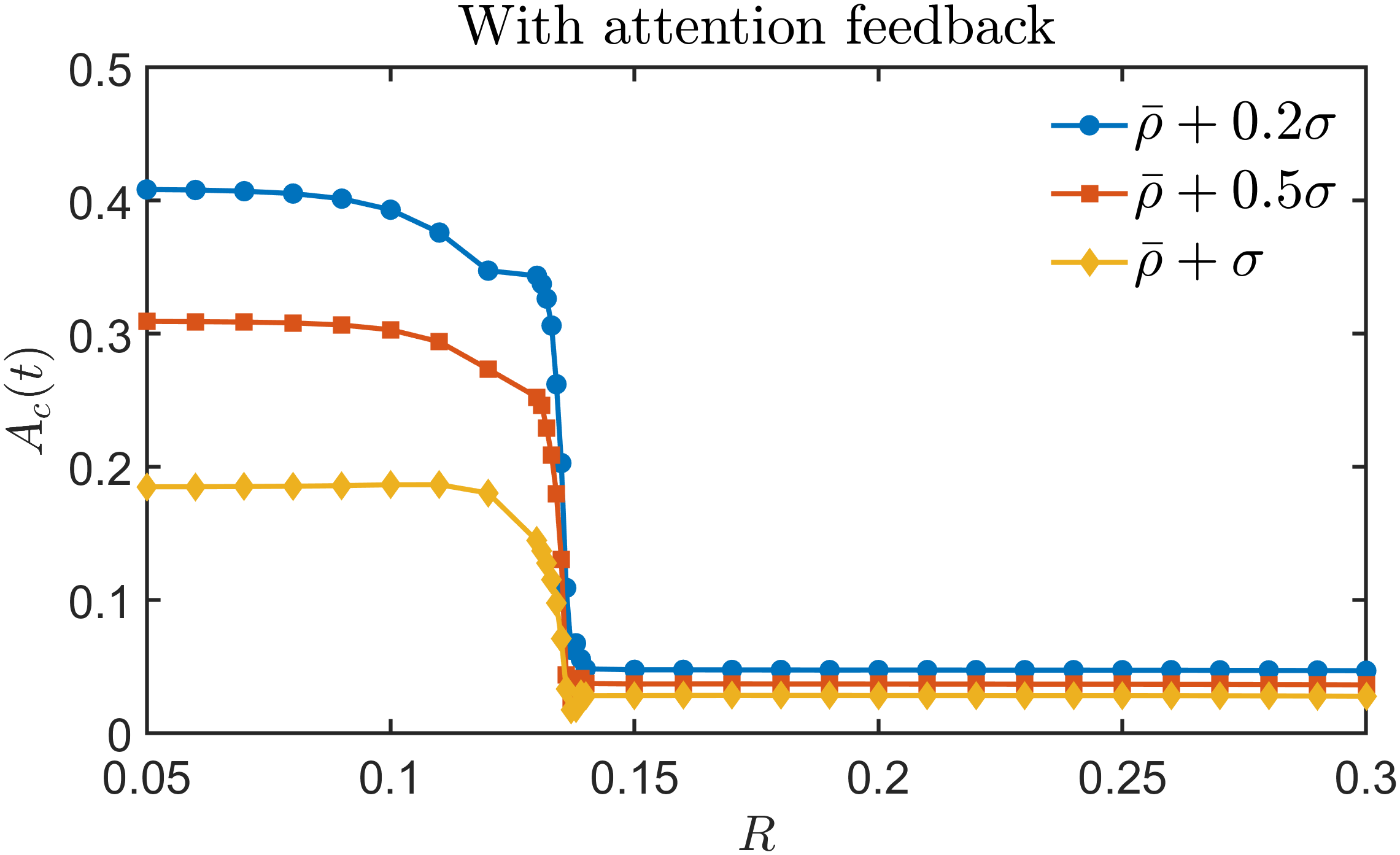}
	\end{subfigure}
	\begin{subfigure}{0.48\textwidth}
		\centering
		\includegraphics[width=\linewidth]{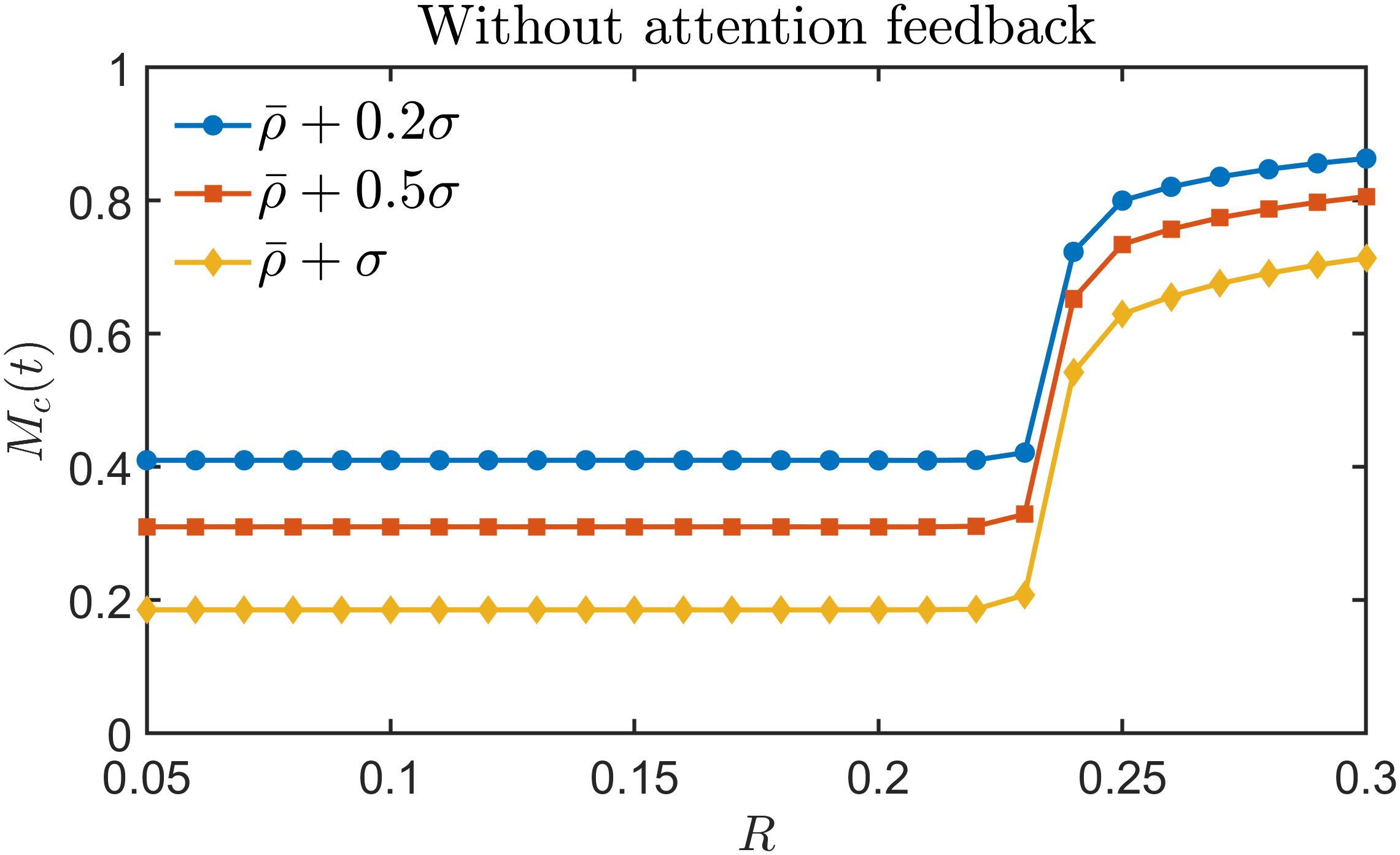}
	\end{subfigure}
	\hfill
	\begin{subfigure}{0.48\textwidth}
		\centering
		\includegraphics[width=\linewidth]{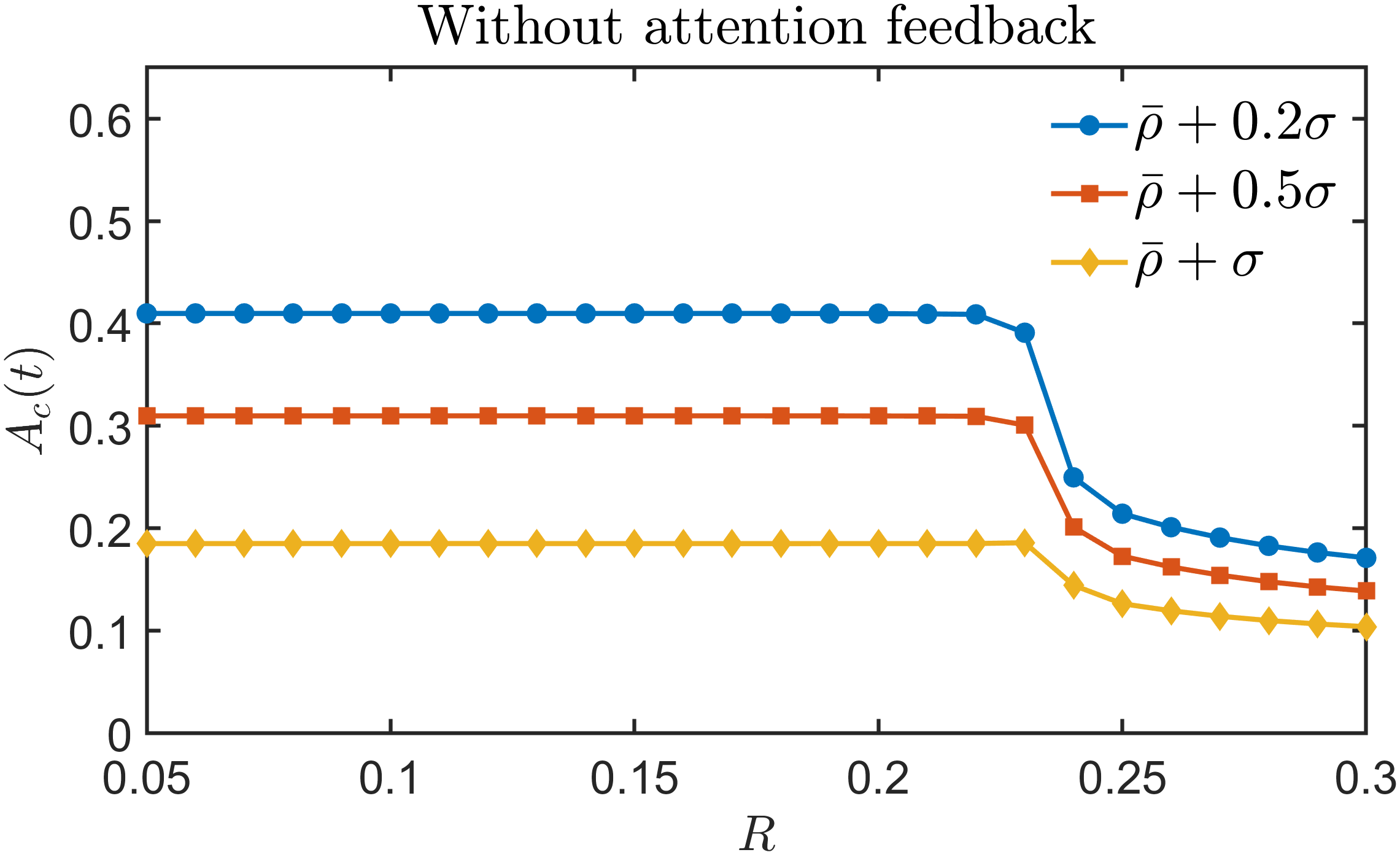}
	\end{subfigure}
	\vspace{-2pt}
	\caption{
		Macroscopic indicators of the high-activity core at $T=100$ as
		functions of the interaction radius $R$ (left: Mass fraction $M_c(T)$, right: Area fraction
		$A_c(T)$). 
		%The left column shows the
		%mass fraction $M_c(T)$, while the right column shows the area fraction
		%$A_c(T)$. 
		The top row corresponds to the full NODAR model with
		attention feedback, and the bottom row corresponds to the reduced
		model without attention feedback. A larger $M_c(T)$ together with a
		smaller $A_c(T)$ indicates that a substantial fraction of opinion
		activity is concentrated in a spatially confined high-activity core.
	}
	\label{fig:macro_indicators_compare}
\end{figure}
These quantities provide a macroscopic signature of non-consensus. In a
consensus-like homogeneous state, the opinion density is broadly
distributed, and no small region carries a disproportionate fraction of
the total mass. By contrast, a large value of $M_c(t)$ together with a
small value of $A_c(t)$ indicates that a substantial share of opinion
activity is concentrated within a relatively small portion of the domain.
This corresponds to the coexistence of a localized high-activity core and
a lower-activity background, and therefore reflects a non-consensus
pattern at the macroscopic level.

Fig.~\ref{fig:macro_indicators_compare} shows how the high-activity is organized. As $R$
increases, the NODAR model exhibits a clear increase in the mass
fraction $M_c(T)$ and a simultaneous decrease in the area fraction
$A_c(T)$, and the sharp changes suggest that 
there exists a phase transition from consensus to non-consensus. 

%This means that opinion activity is no longer broadly
%distributed over the domain, but becomes concentrated in localized
%high-density regions. 
%In terms of the threshold set $E_c(T)$, this behavior indicates that a
%larger proportion of the total opinion mass is captured by a smaller
%spatial region. Thus, the regime with large $M_c(T)$ and small $A_c(T)$
%provides a macroscopic signature of non-consensus: a localized
%high-activity core coexists with a lower-activity background, rather
%than the system approaching a spatially homogeneous state.
%Under the microscopic behavioral interpretation, the high-activity core
%$E_c(T)$ may be viewed as the macroscopic region where adoption is dominant. The remaining
%background $\Omega\setminus E_c(T)$ contains the effects of
%keeping and compromising.

The comparison between the NODAR model and the reduced model without
attention feedback highlights the role of attention-mediated reinforcement.
Although conformity interactions alone can already produce clustering,
feedback through the attention field amplifies the concentration of opinion
activity and makes the high-activity core more spatially confined. This
provides a macroscopic manifestation of the echo chamber effect.

\section{Conclusion}\label{sec:conclusion}

In this work, we propose a nonlocal model of opinion dynamics coupled
with a dynamically evolving attention field. Starting from a
agent-based inflow-outflow mechanism, we derive a nonlocal
advection-cross-diffusion system as the continuum limit, incorporating
two key mechanisms: Social conformity through nonlocal alignment and
attention-mediated feedback associated with echo chamber
reinforcement. These mechanisms give rise to non-consensus states characterized
by persistent spatial heterogeneity.

The linear stability analysis reveals that the interplay between these
mechanisms determines the stability of the homogeneous equilibrium and
the onset of spatial pattern formation. In absence of attention
feedback, clustering arises from the competition between nonlocal
alignment and diffusion. When attention feedback is present, the coupling
between opinion density and attention acts as an amplification mechanism.
It modifies the diffusion threshold and enlarges the parameter regime in
which spatial aggregation can occur. In particular, attention
reinforcement raises the diffusion level required to suppress instability,
thereby promoting the emergence of clustered patterns.
Numerical simulations further show that attention feedback concentrates
opinion activity into localized regions and reduces the spatial extent of
high-activity areas. These results
highlight how the feedback between social conformity and attention
reinforcement can generate persistent non-consensus patterns in spatially
extended information environments, as well as providing a
macroscopic signature of echo chamber reinforcement.

%This reflects a redistribution of collective
%attention toward a few dominant regions of opinion activity, 

%Overall,  Instead of converging to a spatially
%homogeneous macroscopic consensus state, the system may develop multiple
%localized clusters of opinion activity that coexist over time. Strong
%local reinforcement stabilizes such structures even in the presence of
%diffusion, whereas stronger spatial mixing of attention or opinion
%activity can delay or suppress the onset of clustering. 

In future work, we will consider the model parameter inference in the collective opinion dynamics, say, using spatially resolved opinion or activity
data to infer the interaction radius, diffusion
levels, and attention reinforcement strength, from observed aggregation
patterns \cite{ChuLiPorter2024KernelInference,Peralta2025OpinionDynamics}. Such a data-driven approach could help assess whether the
predicted instability thresholds, localized high-activity regions, and the echo chamber effect are consistent with real-world  scenarios.

\section*{Acknowledgment}
This research was supported by the National Natural Science Foundation of China (Nos. 42450275, 12231003, 11871105, 12571413).

\appendix
\section{Proof of Theorem~\ref{thm:basic-properties}}
\label{app:proof-positivity}
This appendix proves the preservation of non-negativity for $\rho$ and
$S$, together with the non-degeneracy of the effective attention field
$A=A^0+S$. We first prove the
estimates in Lemma \ref{lem:coeff-estimates}.

\begin{proof}[Proof of Lemma \ref{lem:coeff-estimates}]	
	Throughout the proof, $C_0, C_1, \ldots$, $C_5$ denote positive constants depending only on $\Omega$, $K_R$, $G_R$, $\varepsilon$, $a_0$, $A^0$, $M$.
	
	Since the kernels in Eq.~\eqref{eq:intro_V} are used periodically on $\mathbb{T}^2$, Young's inequality gives
	\[
	\|\mathcal{K}[u] - \varepsilon\|_{H^3(\Omega)}
	\le \|K_R\|_{L^1(\Omega)}\|u\|_{H^3(\Omega)}, \qquad
	\|\mathcal{N}[u]\|_{H^3(\Omega;\mathbb{R}^2)}
	\le \|G_R\|_{L^1(\Omega;\mathbb{R}^2)}\|u\|_{H^3(\Omega)}.
	\]
	then for $u \ge 0$ and $\|u\|_{H^3} \le M$, it has that
	\[
	\mathcal{K}[u](x) \ge \varepsilon,\quad
	\|\mathcal{N}[u]\|_{H^3(\Omega;\mathbb{R}^2)} + \|\mathcal{K}[u]\|_{H^3(\Omega)} \le C_0 \|u\|_{H^3} \le C_0 M.
	\]
	By the Sobolev composition estimate \cite{BrezisMironescu2001Composition}, we obtain
	\[
	\left\|\frac{1}{\mathcal{K}[u]}\right\|_{H^3(\Omega)} \le C_1.
	\]
	Since $H^3(\Omega)$ is a Banach algebra in two dimensions, it follows that
	\[
	\|V[u]\|_{H^3(\Omega;\mathbb{R}^2)}
	= \left\|\mathcal{N}[u]\,\frac{1}{\mathcal{K}[u]}\right\|_{H^3(\Omega;\mathbb{R}^2)}
	\le C_0 C_1 M.
	\]
	
	We next prove the Lipschitz estimate for $V$. It starts from
	\[
	V[u] - V[v]
	= \frac{\mathcal{N}[u] - \mathcal{N}[v]}{\mathcal{K}[u]}
	+ \mathcal{N}[v]\left(\frac{1}{\mathcal{K}[u]} - \frac{1}{\mathcal{K}[v]}\right).
	\]
	For the first term, Young's inequality and the product estimate in $H^3(\Omega)$ give
	\[
	\left\|\frac{\mathcal{N}[u] - \mathcal{N}[v]}{\mathcal{K}[u]}\right\|_{H^3(\Omega;\mathbb{R}^2)}
	\le C_1 \|G_R\|_{L^1} \|u - v\|_{H^3(\Omega)}.
	\]
	For the second term, since
	\[
	\frac{1}{\mathcal{K}[u]} - \frac{1}{\mathcal{K}[v]}
	= \frac{\mathcal{K}[v] - \mathcal{K}[u]}{\mathcal{K}[u]\,\mathcal{K}[v]},\quad \mathcal{K}[v] - \mathcal{K}[u] = \int_{\Omega}K_R(y-\cdot)(v(y)-u(y))dy.
	\]
	Using Young's inequality, the algebra property of $H^3(\Omega)$, and the lower bounds $\mathcal{K}[u], \mathcal{K}[v] \ge \varepsilon$ yield
	\[
	\left\|\frac{1}{\mathcal{K}[u]} - \frac{1}{\mathcal{K}[v]}\right\|_{H^3(\Omega)}
	\le C_1^2\Vert K_R\Vert_{L^1} \|u - v\|_{H^3(\Omega)}.
	\]
	Together with $\|\mathcal{N}[v]\|_{H^3(\Omega;\mathbb{R}^2)} \le C_0$, we obtain
	\[
	\|V[u] - V[v]\|_{H^3(\Omega;\mathbb{R}^2)} \le C_2\|u - v\|_{H^3(\Omega)},
	\]
	where $C_2 = C_1\|G_R\|_{L^1} + C_0 C_1^2 M \Vert K_R\Vert_{L^1}$.
	
	We now estimate the drift field $\nabla A/A$. Since $A_i = A^0 + S_i$, we have
	\[
	\|A_i\|_{H^4(\Omega)} \le \|A^0\|_{H^4(\Omega)} + \|S_i\|_{H^4(\Omega)} \le C_3.
	\]
	The lower bound $A_i \ge a_0 > 0$ and the Sobolev composition estimate \cite{BrezisMironescu2001Composition} imply
	\[
	\left\|\frac{1}{A_i}\right\|_{H^4(\Omega)} \le C_4.
	\]
	Therefore, by the product estimate in $H^3(\Omega)$,
	\[
	\left\|\frac{\nabla A_i}{A_i}\right\|_{H^3(\Omega)}
	\le \|\nabla A_i\|_{H^3(\Omega)} \left\|\frac{1}{A_i}\right\|_{H^3(\Omega)}
	\le C_3 C_4.
	\]
	
	For the Lipschitz estimate, we use the identity
	\[
	\frac{\nabla A_1}{A_1} - \frac{\nabla A_2}{A_2}
	= \frac{\nabla(A_1 - A_2)}{A_1}
	+ \nabla A_2 \left(\frac{1}{A_1} - \frac{1}{A_2}\right).
	\]
	Since $A_1 - A_2 = S_1 - S_2$, the first term satisfies
	\[
	\left\|\frac{\nabla(A_1 - A_2)}{A_1}\right\|_{H^3(\Omega)}
	\le C_4 \|S_1 - S_2\|_{H^4(\Omega)}.
	\]
	For the second term,
	\[
	\left\|\frac{1}{A_1} - \frac{1}{A_2}\right\|_{H^3(\Omega)}
	= \left\|\frac{S_2 - S_1}{A_1 A_2}\right\|_{H^3(\Omega)}
	\le C_4^2 \|S_1 - S_2\|_{H^4(\Omega)},
	\]
	and together with the product estimate and $\|\nabla A_2\|_{H^3} \le C_3$,
	\[
	\left\|\frac{\nabla A_1}{A_1} - \frac{\nabla A_2}{A_2}\right\|_{H^3(\Omega)}
	\le C_5 \|S_1 - S_2\|_{H^4(\Omega)},
	\]
	where $C_5 = C_4 + C_3 C_4^2$.
	
	Taking $C = \max\{C_0 C_1 M,\, C_2,\, C_3 C_4,\, C_5\}$, we obtain all the estimates stated in Lemma~\ref{lem:coeff-estimates}. This completes the proof.
\end{proof}

Now we prove Theorem~\ref{thm:basic-properties} by a Picard iteration.
The estimates in Lemma~\ref{lem:coeff-estimates} will be used to control
the drift field in the iteration.

\begin{proof}[Proof of Theorem~\ref{thm:basic-properties}]
	Starting from
	$(\rho^{(0)},S^{(0)})=(\rho_0,S_0)$,
	we define the Picard sequence recursively. 
	%Suppose that
	%\((\rho^{(n)},S^{(n)})\in\mathcal B_T\) has been given. 
	Set
	\[
	A^{(n)}:=A^0+S^{(n)},
	\qquad
	B^{(n)}
	:=
	V[\rho^{(n)}]
	+
	\frac{2D_\rho}{A^{(n)}}\nabla A^{(n)} .
	\]
	Since \(\rho^{(n)},S^{(n)}\ge0\), we have
	\[
	A^{(n)}\ge A^0\ge a_0>0,
	\qquad
	\mathcal K[\rho^{(n)}]\ge \varepsilon>0 .
	\]
	By Lemma~\ref{lem:coeff-estimates},
	$\|B^{(n)}\|_{L^\infty(0,T;H^3)}\le \tilde{C}_1 $.
	Since \(H^3(\Omega)\hookrightarrow W^{1,\infty}(\Omega)\) in two
	dimensions, we also have
	\[
	B^{(n)}\in L^\infty(0,T;W^{1,\infty}(\Omega)).
	\]

	We then define \((\rho^{(n+1)},S^{(n+1)})\) as the solution of
	\begin{equation}
		\label{eq:rho-picard}
		\partial_t\rho^{(n+1)}
		=
		D_\rho\Delta\rho^{(n+1)}
		-
		\nabla\cdot\bigl(\rho^{(n+1)}B^{(n)}\bigr),
		\qquad
		\rho^{(n+1)}(\cdot,0)=\rho_0,
	\end{equation}
	and
	\begin{equation}
		\label{eq:S-picard}
		\partial_tS^{(n+1)}
		=
		D_S\Delta S^{(n+1)}
		-\omega S^{(n+1)}
		+\theta\rho^{(n+1)},
		\qquad
		S^{(n+1)}(\cdot,0)=S_0 .
	\end{equation}
	This construction defines a Picard map \(\Phi\)
	\[
	(\rho^{(n+1)},S^{(n+1)})
	=
	\Phi(\rho^{(n)},S^{(n)}).
	\]

	In the following steps, we first prove that the above linear problems preserve
	non-negativity. For \(T>0\), set
	\[
	X_T:=C([0,T];H^3(\Omega))\times C([0,T];H^4(\Omega)) .
	\]
	For a constant \(\widehat M>0\), define
	\[
	\mathcal B_T
	:=
	\left\{
	(\rho,S)\in X_T:
	\rho,S\ge0\ \text{a.e. in }\Omega\times[0,T],
	~~
	\sup_{0\le t\le T}
	\left(
	\|\rho(t)\|_{H^3}^2+\|S(t)\|_{H^4}^2
	\right)
	\le \widehat M
	\right\}.
	\]
	We then derive uniform estimates to show that, after choosing a suitable
	\(\widehat M\) and taking \(T>0\) sufficiently small,
	\(\Phi(\mathcal B_T)\subset \mathcal B_T\). Finally, we prove that \(\Phi\)
	is a contraction on \(\mathcal B_T\) with respect to the norm
	\begin{equation}\label{eq:norm}
		\|(\rho,S)\|_{X_T} := \left[ \sup_{0\le t\le T} \left( \|\rho(t)\|_{H^3}^2 + \|S(t)\|_{H^4}^2 \right) \right]^{1/2}.
	\end{equation}
	%We proceed in the following steps.

	\smallskip
	\noindent\textbf{Step 1:}
	We first prove the preservation of non-negativity.
	Let \((\rho^{(n+1)})^-:=\max\{-\rho^{(n+1)},0\}\). Testing
	\eqref{eq:rho-picard} with \(-(\rho^{(n+1)})^-\), using periodic
	boundary conditions, gives
	\[
	\frac12\frac{d}{dt}\|(\rho^{(n+1)})^-\|_{L^2}^2
	+
	D_\rho\|\nabla(\rho^{(n+1)})^-\|_{L^2}^2
	=
	-\frac12\int_\Omega
	(\nabla\cdot B^{(n)})
	\bigl((\rho^{(n+1)})^-\bigr)^2\,dx .
	\]
	Since \(B^{(n)}\in L^\infty(0,T;W^{1,\infty})\),
	\[
	\frac{d}{dt}\|(\rho^{(n+1)})^-\|_{L^2}^2
	\le
	\|\nabla\cdot B^{(n)}\|_{L^\infty}
	\|(\rho^{(n+1)})^-\|_{L^2}^2 .
	\]
	Because \((\rho^{(n+1)}(\cdot,0))^-=0\), Gronwall's inequality implies
	\((\rho^{(n+1)})^-\equiv0\). Thus
	\[
	\rho^{(n+1)}\ge0
	\quad\text{a.e. in } \Omega\times(0,T).
	\]
	
	Similarly, let \((S^{(n+1)})^-:=\max\{-S^{(n+1)},0\}\). Testing
	\eqref{eq:S-picard} with \(-(S^{(n+1)})^-\) gives
	\[
	\frac12\frac{d}{dt}\|(S^{(n+1)})^-\|_{L^2}^2
	+
	D_S\|\nabla(S^{(n+1)})^-\|_{L^2}^2
	+
	\omega\|(S^{(n+1)})^-\|_{L^2}^2
	=
	-\theta\int_\Omega \rho^{(n+1)}(S^{(n+1)})^-\,dx .
	\]
	Since \(\theta>0\) and \(\rho^{(n+1)}\ge0\), the right-hand side is
	non-positive. Together with \((S^{(n+1)}(\cdot,0))^-=0\), this yields
	\[
	S^{(n+1)}\ge0
	\quad\text{a.e. in } \Omega\times(0,T).
	\]
	Hence the Picard iteration preserves non-negativity.

	\smallskip
	\noindent\textbf{Step 2:} We prove that, for suitable \(\widehat M\) and sufficiently small \(T>0\),
	the Picard map satisfies \(\Phi(\mathcal B_T)\subset\mathcal B_T\).
	By Step 1, \(\rho^{(n+1)},S^{(n+1)}\ge0\). It remains to prove the uniform
	\(H^3\times H^4\) estimate.
	
	Applying the standard \(H^3\)-energy estimate to
	\eqref{eq:rho-picard}, the drift term is controlled by
	Young's inequality:
	\begin{equation*}
		\left|
		\langle
		\nabla\cdot(\rho^{(n+1)}B^{(n)}),
		\rho^{(n+1)}
		\rangle_{H^3}
		\right|\le
		\frac{D_\rho}{2}\|\nabla\rho^{(n+1)}\|_{H^3}^2
		+
		{C}_1\|\rho^{(n+1)}\|_{H^3}^2 ,
	\end{equation*} 
	where \(C_1>0\) is independent of \(T\) and \(n\).
	Then using $B^{(n)}\in L^\infty(0,T;W^{1,\infty})$, we obtain
	\begin{equation}\label{eq:rho_uniform_est}
		\frac{d}{dt}\|\rho^{(n+1)}\|_{H^3}^2
		+
		D_\rho\|\nabla\rho^{(n+1)}\|_{H^3}^2
		\le
		{C}_1\|\rho^{(n+1)}\|_{H^3}^2 .
	\end{equation}

	For \(S^{(n+1)}\), applying the \(H^4\)-energy estimate to
	\eqref{eq:S-picard} gives
	\begin{equation*}
		\frac{d}{dt}\|S^{(n+1)}\|_{H^4}^2
		+
		D_S\|\nabla S^{(n+1)}\|_{H^4}^2
		\le
		{C}_2\|S^{(n+1)}\|_{H^4}^2
		+
		{C}_3\|\rho^{(n+1)}\|_{H^4}^2 .
	\end{equation*}
	Using $\|\rho^{(n+1)}\|_{H^4}^2
	\le
	\|\rho^{(n+1)}\|_{H^3}^2
	+
	\|\nabla \rho^{(n+1)}\|_{H^3}^2
	$,
	we obtain
	\begin{equation}\label{eq:S_uniform_est}
		\begin{aligned}
			\frac{d}{dt}\|S^{(n+1)}\|_{H^4}^2
			+
			D_S\|\nabla S^{(n+1)}\|_{H^4}^2
			&\le
			{C}_2\|S^{(n+1)}\|_{H^4}^2
			+
			{C}_3\|\rho^{(n+1)}\|_{H^3}^2\\
			&+
			{C}_3\|\nabla \rho^{(n+1)}\|_{H^3}^2 .
		\end{aligned}
	\end{equation}
	where \(C_2,C_3>0\) are independent of \(T\) and \(n\).
	
	We can choose sufficiently large \(\lambda>0\)  such that
	$\lambda D_\rho \ge C_3+1 $.
	Multiplying \eqref{eq:rho_uniform_est} by \(\lambda\) and adding \eqref{eq:S_uniform_est}, we obtain
	\begin{equation*}
		\begin{aligned}
			\frac{d}{dt}
			\left(
			\lambda\|\rho^{(n+1)}\|_{H^3}^2
			+
			\|S^{(n+1)}\|_{H^4}^2
			\right)
			&+ c_0
			\left(
			\|\nabla \rho^{(n+1)}\|_{H^3}^2
			+
			\|\nabla S^{(n+1)}\|_{H^4}^2
			\right)
			\\
			&\le
			\tilde{C}
			\left(
			\lambda\|\rho^{(n+1)}\|_{H^3}^2
			+
			\|S^{(n+1)}\|_{H^4}^2
			\right).
		\end{aligned}
	\end{equation*}
	where
	$ \tilde{C}
	=
	\max\left\{
	C_1+\frac{C_3}{\lambda},
	C_2
	\right\}$, $c_0= \min\{1,D_S\}$. Set $E_0:=\|\rho_0\|_{H^3}^2+\|S_0\|_{H^4}^2 $.
	By Gronwall's inequality, there exists $C_\ast$ such that 
	\begin{equation*}
		\begin{aligned}
			&\sup_{0\le t\le T}
			\left(
			\|\rho^{(n+1)}(t)\|_{H^3}^2
			+
			\|S^{(n+1)}(t)\|_{H^4}^2
			\right)
			\\
			&+
			\int_0^T
			\left(
			\|\nabla \rho^{(n+1)}\|_{H^3}^2
			+
			\|\nabla S^{(n+1)}\|_{H^4}^2
			\right)\,dt
			\le
			C_*E_0e^{\tilde{C} T},
		\end{aligned}
	\end{equation*}
	where \(C_*\) is independent of \(n\).
	
	It suffices to choose \(\widehat M>0\) such that
	$\widehat M\ge \max\{E_0,2C_*E_0\}$, and  \(T>0\) sufficiently small so that \(e^{\widetilde C T}\le2\).
	Then
	\[
	\sup_{0\le t\le T}
	\left(
	\|\rho^{(n+1)}(t)\|_{H^3}^2
	+
	\|S^{(n+1)}(t)\|_{H^4}^2
	\right)
	\le \widehat M.
	\]
	Thus \((\rho^{(n+1)},S^{(n+1)})\in\mathcal B_T\), and hence
	\(\Phi(\mathcal B_T)\subset\mathcal B_T\). Moreover, each pair $(\rho^{(n)}, S^{(n)})$ satisfies
	\[
	\|\rho^{(n)}\|_{L^\infty(0,T;H^3)}
	+
	\|\nabla\rho^{(n)}\|_{L^2(0,T;H^3)}
	+
	\|S^{(n)}\|_{L^\infty(0,T;H^4)}
	+
	\|\nabla S^{(n)}\|_{L^2(0,T;H^4)}
	\le C .
	\]
	
	\smallskip
	\noindent\textbf{Step 3:}
	We prove that \(\Phi\) is a contraction in \(\mathcal B_T\). 
	%Take two
	%arbitrary inputs in \(\mathcal B_T\), denoted by
	%$(\rho_i^{(n)},S_i^{(n)})\in\mathcal B_T$, $i=1,2$,
	%and let
	%\[
	%(\rho_i^{(n+1)},S_i^{(n+1)})
	%=
	%\Phi(\rho_i^{(n)},S_i^{(n)}),
	%\qquad i=1,2.
	%\]
	Define
	\begin{equation*}
		\delta\rho^{(n+1)}:=\rho^{(n+1)}-\rho^{(n)},\quad
		\delta S^{(n+1)}:=S^{(n+1)}-S^{(n)}.
	\end{equation*}
	Subtracting the corresponding equations gives
	\[
	\partial_t\delta\rho^{(n+1)}
	=
	D_\rho\Delta\delta\rho^{(n+1)}
	-
	\nabla\cdot\bigl(\delta\rho^{(n+1)}B^{(n)}\bigr)
	-
	\nabla\cdot
	\left(
	\rho^{(n)}(B^{(n)}-B^{(n-1)})
	\right),
	\]
	and
	\[
	\partial_t\delta S^{(n+1)}
	=
	D_S\Delta\delta S^{(n+1)}
	-\omega\delta S^{(n+1)}
	+\theta\delta\rho^{(n+1)},
	\]
	with zero initial data. By Lemma~\ref{lem:coeff-estimates} and the uniform
	bounds in \(\mathcal B_T\),
	\[
	\|B^{(n)}-B^{(n-1)}\|_{H^3}
	\le
	C_4
	\left(
	\|\delta\rho^{(n)}\|_{H^3}
	+
	\|\delta S^{(n)}\|_{H^4}
	\right),
	\]
	where \(C_4>0\) is independent of \(T\) and \(n\).
	
	By applying same \(H^3\)-energy estimate as in \eqref{eq:rho_uniform_est}
	to \(\delta\rho^{(n+1)}\), it yields 
	\begin{equation}\label{eq:delta_rho}
		\begin{aligned}
			\frac{d}{dt}
			\|\delta\rho^{(n+1)}\|_{H^3}^2
			+
			D_\rho
			\|\nabla\delta\rho^{(n+1)}\|_{H^3}^2
			&\le
			C_1\|\delta\rho^{(n+1)}\|_{H^3}^2
			\\
			&\quad+
			C_4
			\left(
			\|\delta\rho^{(n)}\|_{H^3}^2
			+
			\|\delta S^{(n)}\|_{H^4}^2
			\right).
		\end{aligned}
	\end{equation}

	Similarly, applying the $H^4$ -energy estimate leading to \eqref{eq:S_uniform_est} to
	\(\delta S^{(n+1)}\) gives
	\[
	\frac{d}{dt}
	\|\delta S^{(n+1)}\|_{H^4}^2
	+
	D_S\|\nabla\delta S^{(n+1)}\|_{H^4}^2
	\le
	C_2\|\delta S^{(n+1)}\|_{H^4}^2
	+
	C_3\|\delta\rho^{(n+1)}\|_{H^4}^2 .
	\]
	Using
	$\|\delta\rho^{(n+1)}\|_{H^4}^2
	\le
	\|\delta\rho^{(n+1)}\|_{H^3}^2
	+
	\|\nabla\delta\rho^{(n+1)}\|_{H^3}^2$,
	we obtain
	\begin{equation}\label{eq:delta_S}
		\begin{aligned}
			\frac{d}{dt}
			\|\delta S^{(n+1)}\|_{H^4}^2
			+
			D_S\|\nabla\delta S^{(n+1)}\|_{H^4}^2
			&\le
			C_2\|\delta S^{(n+1)}\|_{H^4}^2
			+
			C_3\|\delta\rho^{(n+1)}\|_{H^3}^2
			\\
			&\quad+
			C_3\|\nabla\delta\rho^{(n+1)}\|_{H^3}^2 .
		\end{aligned}
	\end{equation}
	Choose \(\Lambda\ge1\) sufficiently large such that
	\(\Lambda D_\rho\ge C_3+1\). Multiplying the estimate for
	\eqref{eq:delta_rho} by \(\Lambda\) and adding \eqref{eq:delta_S}, we get
	\[
	\begin{aligned}
		\frac{d}{dt}
		\left(
		\Lambda \|\delta\rho^{(n+1)}\|_{H^3}^2
		+
		\|\delta S^{(n+1)}\|_{H^4}^2
		\right)
		&\le
		C_5
		\left(
		\Lambda \| \delta\rho^{(n+1)}\|_{H^3}^2
		+
		\|\delta S^{(n+1)}\|_{H^4}^2
		\right)
		\\
		&\quad+
		\Lambda C_4
		\left(
		\|\delta\rho^{(n)}\|_{H^3}^2
		+
		\|\delta S^{(n)}\|_{H^4}^2
		\right),
	\end{aligned}
	\]
	where $C_5:=\max\left\{ C_1+\frac{C_3}{\Lambda},\,C_2\right\}$ is independent of \(T\) and \(n\).

	Since
	\(\delta\rho^{(n+1)}(\cdot,0)=0\) and
	\(\delta S^{(n+1)}(\cdot,0)=0\), Gronwall's inequality gives
	\[
	\begin{aligned}
		&\sup_{0\le t\le T}
		\left(
		\|\delta\rho^{(n+1)}(t)\|_{H^3}^2
		+
		\|\delta S^{(n+1)}(t)\|_{H^4}^2
		\right)
		\\
		&\qquad\le
		\Lambda C_4Te^{C_5T}
		\sup_{0\le t\le T}
		\left(
		\|\delta\rho^{(n)}(t)\|_{H^3}^2
		+
		\|\delta S^{(n)}(t)\|_{H^4}^2
		\right).
	\end{aligned}
	\]
	It suffices to choose small  \(T\) such that
	\((\Lambda C_4Te^{C_5T})^{1/2}<1\). Hence \(\Phi\) is a contraction on \(\mathcal B_T\)
	with respect to the norm \eqref{eq:norm}.

	\smallskip
	\noindent\textbf{Step 4:}
	By Steps 2 and 3, for the above choice of \(\widehat M\) and sufficiently
	small \(T>0\), we have
	$\Phi(\mathcal B_T)\subset \mathcal B_T$,
	and \(\Phi\) is a contraction on \(\mathcal B_T\) with respect to the norm
	\eqref{eq:norm}. Since
	\(\mathcal B_T\) is a closed subset of the Banach space \(X_T\), the Banach
	fixed point theorem yields a unique fixed point \((\rho,S)\in\mathcal B_T\).
	That is, \((\rho,S)=\Phi(\rho,S)\). Therefore \((\rho,S)\) solves the
	original nonlinear system on \([0,T]\) with initial data
	\(\rho(\cdot,0)=\rho_0\) and \(S(\cdot,0)=S_0\).
	
	Since \((\rho,S)\in\mathcal B_T\), we have
	\[
	\rho\in C([0,T];H^3(\Omega)),
	\quad
	S\in C([0,T];H^4(\Omega)).
	\]
	Moreover, applying the uniform estimate in Step 2 to the fixed point yields
	\[
	\rho\in L^2(0,T;H^4(\Omega)),
	\quad
	S\in L^2(0,T;H^5(\Omega)).
	\]
	Hence
	\[
	\rho\in C([0,T];H^3(\Omega))\cap L^2(0,T;H^4(\Omega)),
	\quad
	S\in C([0,T];H^4(\Omega))\cap L^2(0,T;H^5(\Omega)).
	\]
	Moreover, the positivity-preserving property in Step 1 is inherited by the
	fixed point, so that
	\[
	\rho\ge0,\qquad S\ge0,
	\qquad
	A:=A^0+S\ge A^0\ge a_0>0
	\quad\text{a.e. in } \Omega\times(0,T).
	\]
	
	Finally, integrating equation \eqref{eq:HK-S-scaled-rho} over \(\Omega\) yields the mass conservation.
	
\end{proof}

\section{Numerical validation}\label{app:verification}
This section provides a numerical verification of the proposed scheme. 
We first verify the accuracy through convergence tests, and then demonstrate its ability to preserve structural properties, including non-negativity and mass conservation.

\subsection{Convergence test}

To facilitate a rigorous error analysis, we define the relative error in the $L^p$-norm ($p=2, \infty$) for a generic variable $u$ as:
\begin{equation*}
	\|e_u\|_{L^p} = \frac{\|u_{\text{comp}}(T) - u_{\text{ref}}(T)\|_{L^p}}{\|u_{\text{ref}}(T)\|_{L^p}},
\end{equation*}
where $u_{\text{comp}}$ and $u_{\text{ref}}$ denote the computed and reference solutions at final time $T$, respectively. 
We consider the following smooth periodic initial conditions:
\begin{equation}\label{init_rho_S}
	\rho_0(x,y)  = S_0(x,y) = 0.2 + 0.1\cos(2\pi x)\cos(2\pi y) + 0.05\sin(4\pi x)\sin(2\pi y), 
\end{equation}
for $(x,y) \in [0,1]^2$. The parameters are chosen as
$R = 0.2$, $A^0=1/30$, $\varepsilon = 10^{-4}$,
$D_\rho = 0.1$, $D_S = 0.1$, $\omega = 0.1$, $\theta = 0.1$, and $I=0.1$.

To assess spatial accuracy, we perform simulations up to $T=0.2$ with a fixed time step $\Delta t=10^{-5}$.
Reference solutions are computed on the finest grid $N_f=256$.
As shown in Table~\ref{space_order_ACCD_rho}, 
the errors for $\rho$ and $S$ decay rapidly with grid refinement and reach $\mathcal{O}(10^{-13})$ already at $N=32$, 
indicating spectral convergence.

To assess temporal accuracy, we refine the time step $\Delta t$ while fixing the spatial resolution at $N=128$.
The reference solution is computed with $\Delta t_{\mathrm{ref}}=10^{-5}$.
As shown in Table~\ref{time_order_ACCD_rho}, both $\rho$ and $S$ exhibit third-order convergence.
%, being consistent with the order of the IMEX-ARS(2,3,3) scheme.

\begin{table}[!h]
	\centering
	\caption{Spatial convergence of $\rho$ and $S$, with initial data \eqref{init_rho_S}.}
	\label{space_order_ACCD_rho}
	\renewcommand{\arraystretch}{1.2}
	\begin{tabular}{c|c|c|c}
		\hline\hline
		$N$ &  $\|e_\rho\|_{L^2}$ &  $\|e_\rho\|_{L^\infty}$ & Convergence Plot \\ \hline
		4  & $2.323 \times 10^{-2}$ & $ 3.121 \times 10^{-2}$ & \multirow{5}{*}{\includegraphics[scale=0.25]{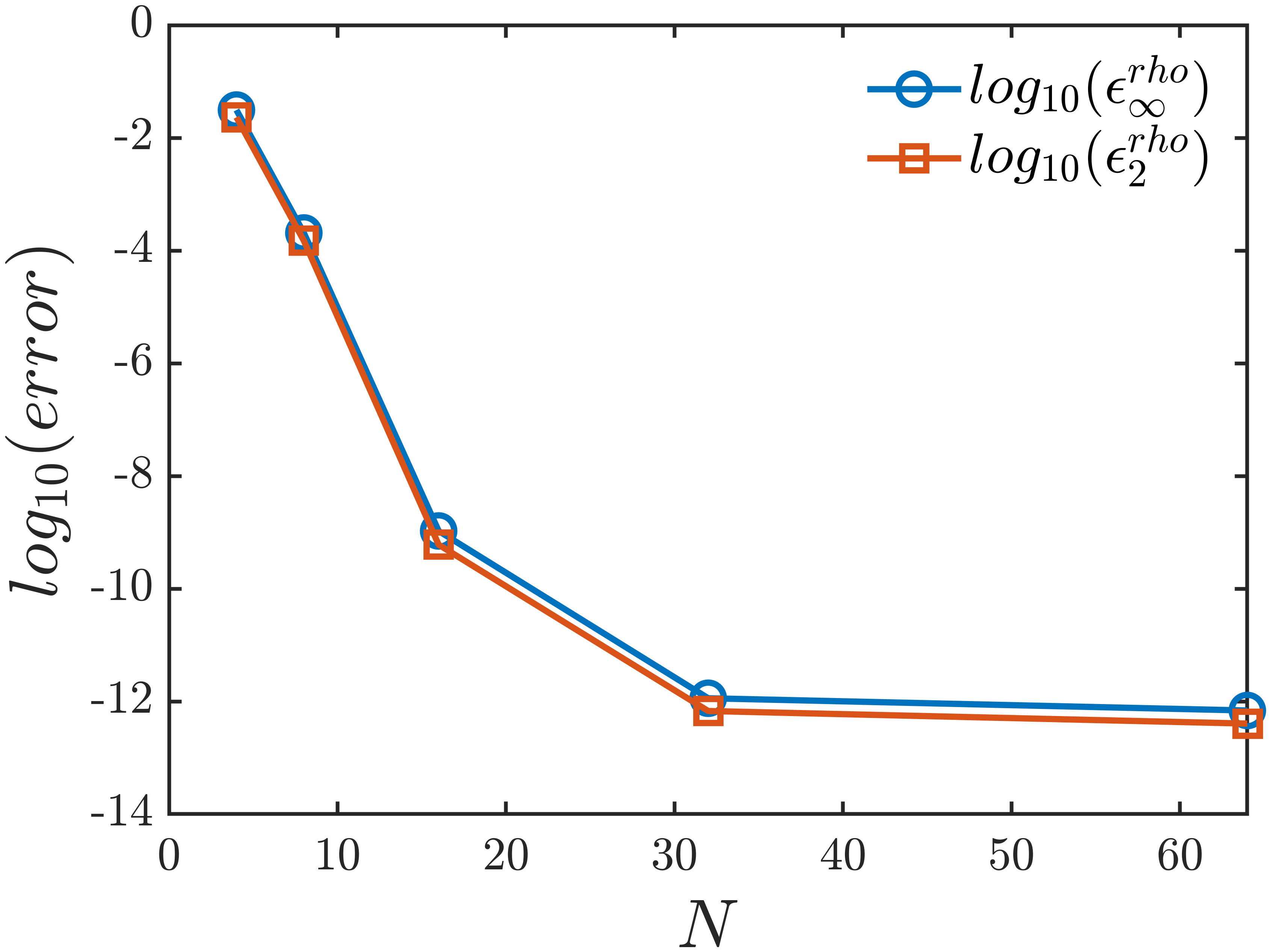}} \\
		8  & $1.499 \times 10^{-4}$ & $ 2.056\times 10^{-4}$ & \\
		16 & $6.041 \times 10^{-10}$ & $1.059 \times 10^{-9}$ & \\
		32 & $6.757 \times 10^{-13}$ & $1.138 \times 10^{-12}$ & \\
		64 & $4.041 \times 10^{-13}$ & $6.897 \times 10^{-13}$ & \\ 
		\hline
		$N$ &  $\|e_S\|_{L^2}$ & $\|e_S\|_{L^\infty}$ & Convergence Plot \\ \hline
		4  & $6.444 \times 10^{-4}$ & $ 1.189 \times 10^{-3}$ & \multirow{5}{*}{\includegraphics[scale=0.25]{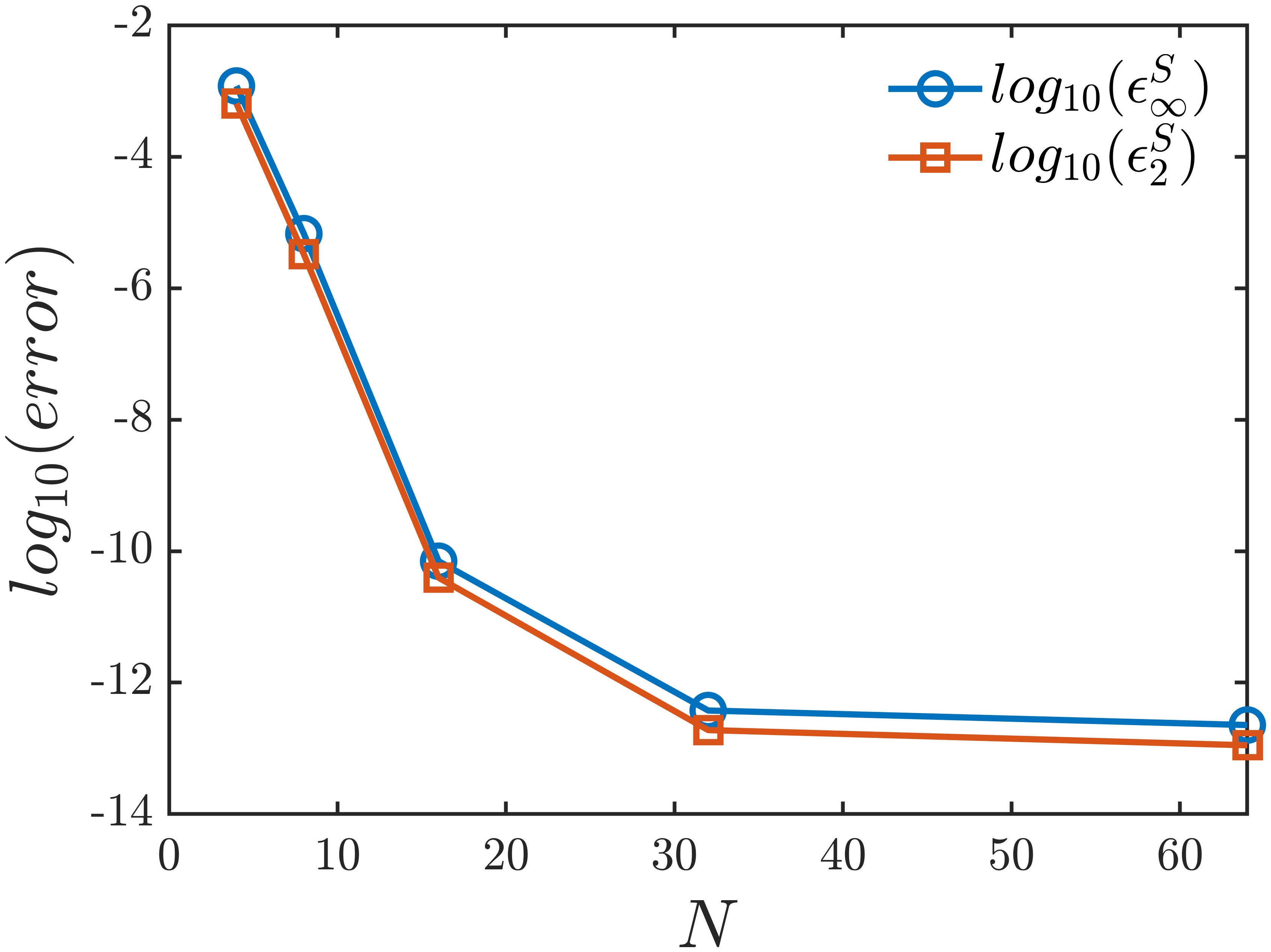}} \\
		8  & $3.275 \times 10^{-6}$ & $ 6.741\times 10^{-6}$ & \\
		16 & $3.942 \times 10^{-11}$ & $ 7.029 \times 10^{-11}$ & \\
		32 & $1.897 \times 10^{-13}$ & $ 3.766 \times 10^{-13}$ & \\
		64 & $1.114 \times 10^{-13}$ & $ 2.264 \times 10^{-13}$ & \\ \hline\hline
	\end{tabular}
\end{table}

%\begin{table}[!h]
%	\centering
%	\caption{Spatial convergence of $S$ in NODAR system, with initial data \eqref{init_rho_S}.}
%	\label{space_order_ACCD_S}
%	\renewcommand{\arraystretch}{1.2}
%	\begin{tabular}{c|c|c|c}
	%		\hline\hline
	%		$N$ &  $\|e_S\|_{L^2}$ & $\|e_S\|_{L^\infty}$ & Convergence Plot \\ \hline
	%		4  & $6.444 \times 10^{-4}$ & $ 1.189 \times 10^{-3}$ & \multirow{5}{*}{\includegraphics[scale=0.25]{figures/HKS_S_space_order.png}} \\
	%		8  & $3.275 \times 10^{-6}$ & $ 6.741\times 10^{-6}$ & \\
	%		16 & $3.942 \times 10^{-11}$ & $ 7.029 \times 10^{-11}$ & \\
	%		32 & $1.897 \times 10^{-13}$ & $ 3.766 \times 10^{-13}$ & \\
	%		64 & $1.114 \times 10^{-13}$ & $ 2.264 \times 10^{-13}$ & \\ \hline\hline
	%	\end{tabular}
%\end{table}

\begin{table}[h!]
	\centering
	\caption{Temporal convergence of $\rho$ and $S$, with initial data \eqref{init_rho_S}.}
	\label{time_order_ACCD_rho}
	\renewcommand{\arraystretch}{1.25}
	\begin{tabular}{c|c|c|c|c|c}
		\hline\hline
		$\Delta t$ & $\|e_\rho\|_{L^2}$ & Order & $\|e_\rho\|_{L^\infty}$ & Order & Convergence Plot \\ \hline
		$1 \times 10^{-3}$ & $5.741 \times 10^{-8}$ & - & $9.470 \times 10^{-8}$ & - & \multirow{4.3}{*}{\includegraphics[scale=0.2]{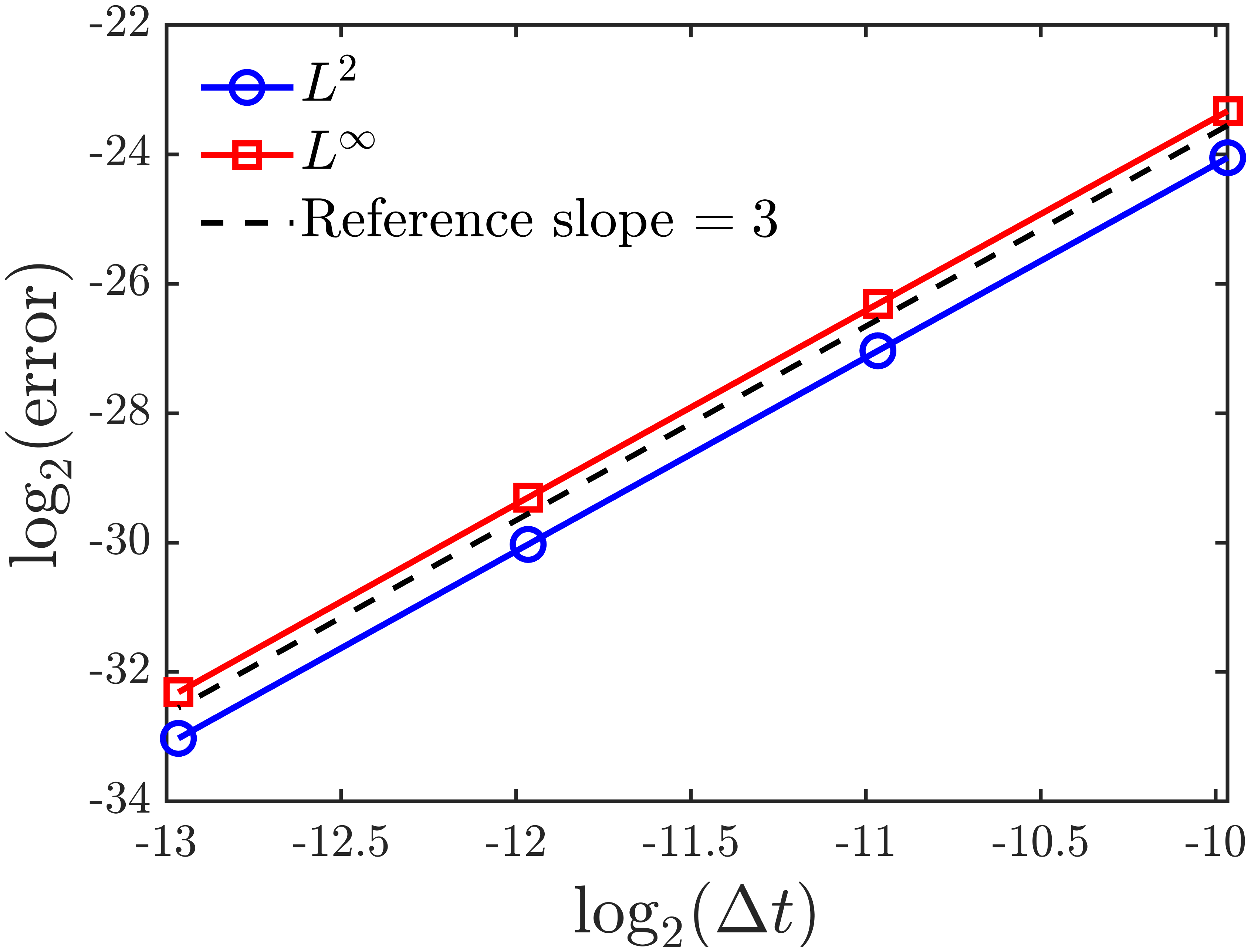}} \\
		$5 \times 10^{-4}$ & $ 7.262 \times 10^{-9}$ & 2.983 & $1.201 \times 10^{-8}$ & 2.980 & \\
		$2.5 \times 10^{-4}$ & $9.131 \times 10^{-10}$ & 2.992 & $1.150 \times 10^{-9}$ & 2.991 & \\
		$1.25 \times 10^{-4}$ & $ 1.144 \times 10^{-10}$ & 2.997 & $1.874 \times 10^{-10}$ & 3.010 & \\ 
		\hline
		$\Delta t$ & $\|e_S\|_{L^2}$ & Order & $\|e_S\|_{L^\infty}$ & Order & Convergence Plot \\ \hline
		$1 \times 10^{-3}$ & $ 8.536 \times 10^{-9}$ & - & $1.524 \times 10^{-8}$ & - & \multirow{4.3}{*}{\includegraphics[scale=0.2]{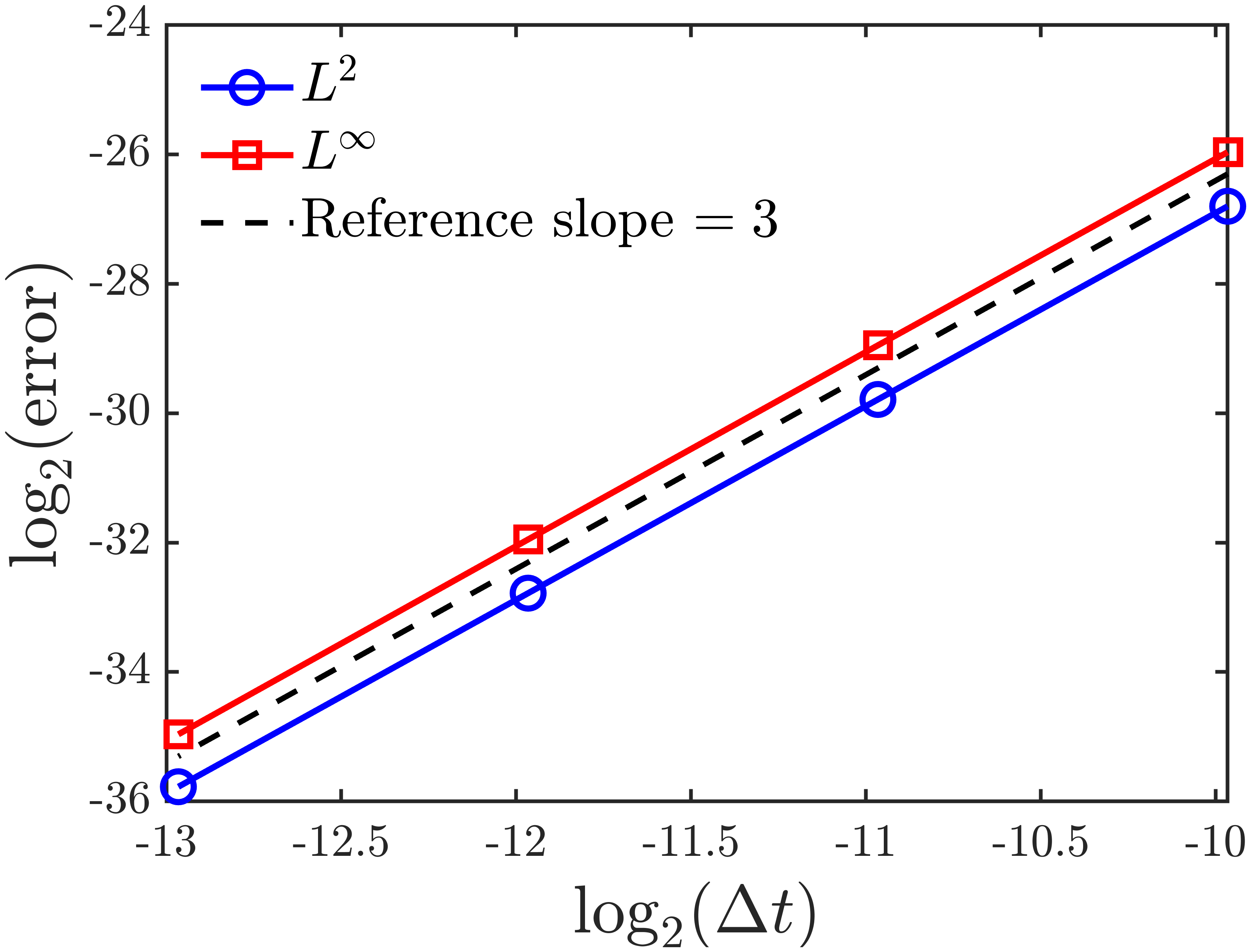}} \\
		$5 \times 10^{-4}$ & $1.078 \times 10^{-10}$ & 2.985 & $1.925 \times 10^{-9}$ & 2.985 & \\
		$2.5 \times 10^{-4}$ & $1.354 \times 10^{-10}$ & 2.993 & $2.408 \times 10^{-10}$ & 2.999 & \\
		$1.25 \times 10^{-4}$ & $1.703 \times 10^{-11}$ & 2.991 & $2.987 \times 10^{-11}$ & 3.011 & \\ \hline\hline
	\end{tabular}
\end{table}

%\begin{table}[h!]
%	\centering
%	\caption{Temporal convergence of $S$ in NODAR system, with initial data \eqref{init_rho_S}.}
%	\label{time_order_ACCD_S}
%	\renewcommand{\arraystretch}{1.25}
%	\begin{tabular}{c|c|c|c|c|c}
	%		\hline\hline
	%		$\Delta t$ & $\|e_S\|_{L^2}$ & Order & $\|e_S\|_{L^\infty}$ & Order & Convergence Plot \\ \hline
	%		$1 \times 10^{-3}$ & $ 8.536 \times 10^{-9}$ & - & $1.524 \times 10^{-8}$ & - & \multirow{4.3}{*}{\includegraphics[scale=0.2]{figures/HKS_S_time_order.png}} \\
	%		$5 \times 10^{-4}$ & $1.078 \times 10^{-10}$ & 2.985 & $1.925 \times 10^{-9}$ & 2.985 & \\
	%		$2.5 \times 10^{-4}$ & $1.354 \times 10^{-10}$ & 2.993 & $2.408 \times 10^{-10}$ & 2.999 & \\
	%		$1.25 \times 10^{-4}$ & $1.703 \times 10^{-11}$ & 2.991 & $2.987 \times 10^{-11}$ & 3.011 & \\ \hline\hline
	%	\end{tabular}
%\end{table}

\subsection{Structure-preserving properties}

We demonstrate that the proposed scheme preserves the key structural properties of the NODAR system, namely non-negativity and mass conservation. 
At each time step, a Lagrange multiplier projection is applied to enforce $\rho \ge 0$, $S \ge 0$, and the conservation of the total mass of $\rho$.

The initial data are taken as small random perturbations of the homogeneous equilibrium \eqref{eq:equilibrium},
\[
\rho(x,y,0)=\rho_0+\delta(x,y), \quad
S(x,y,0)=S_0+\delta(x,y),
\]
where $\delta(x,y)$ is constructed as a superposition of $30$ Gaussian functions with random centres, amplitudes, and widths. 

\begin{figure}[h!]
	\centering
	\vspace{-5pt}
	
	% ================= Row 1 =================
	\begin{subfigure}{0.28\textwidth}
		\centering
		\includegraphics[width=\linewidth]{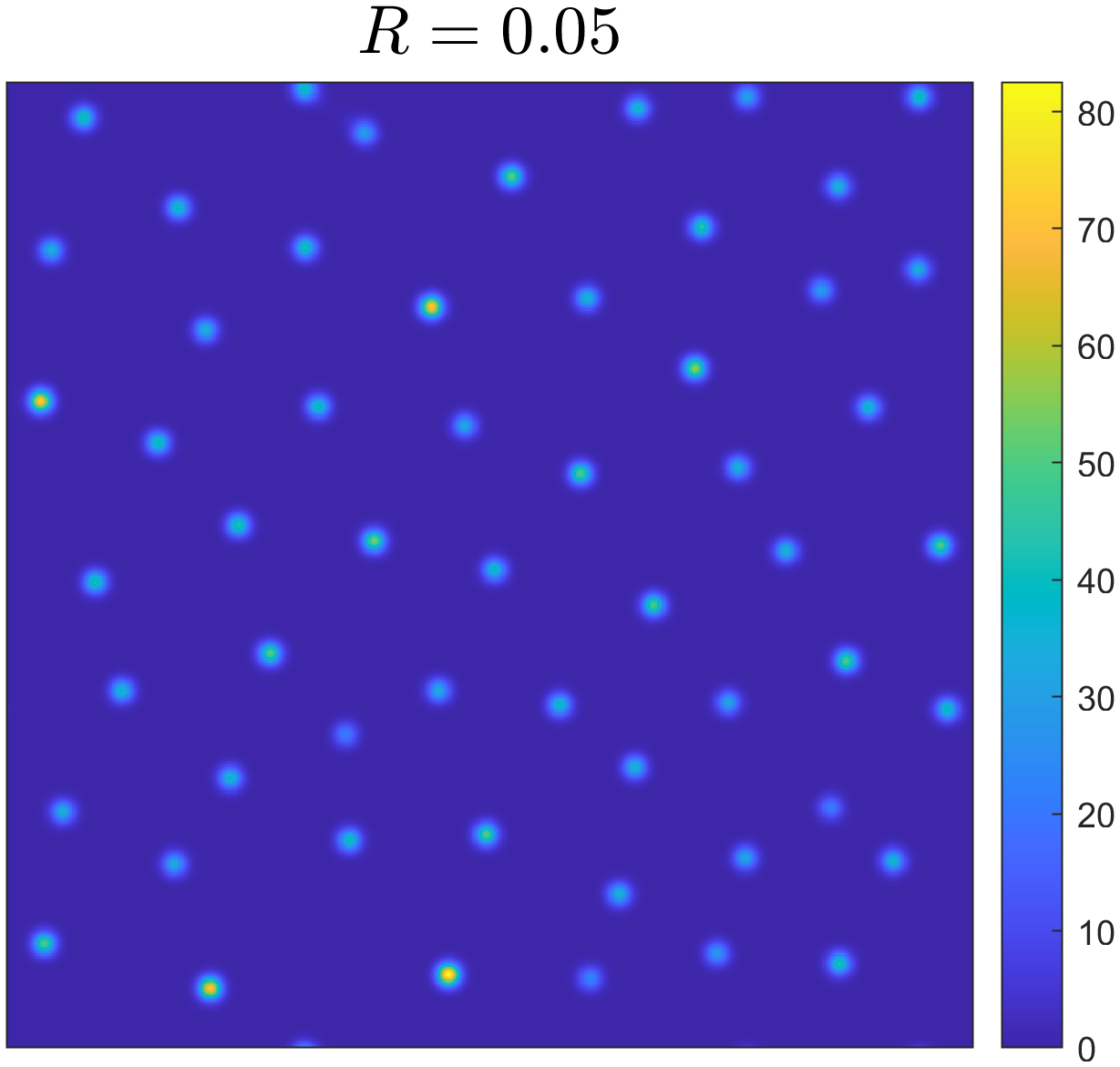}
	\end{subfigure}
	\hfill
	\begin{subfigure}{0.32\textwidth}
		\centering
		\includegraphics[width=\linewidth]{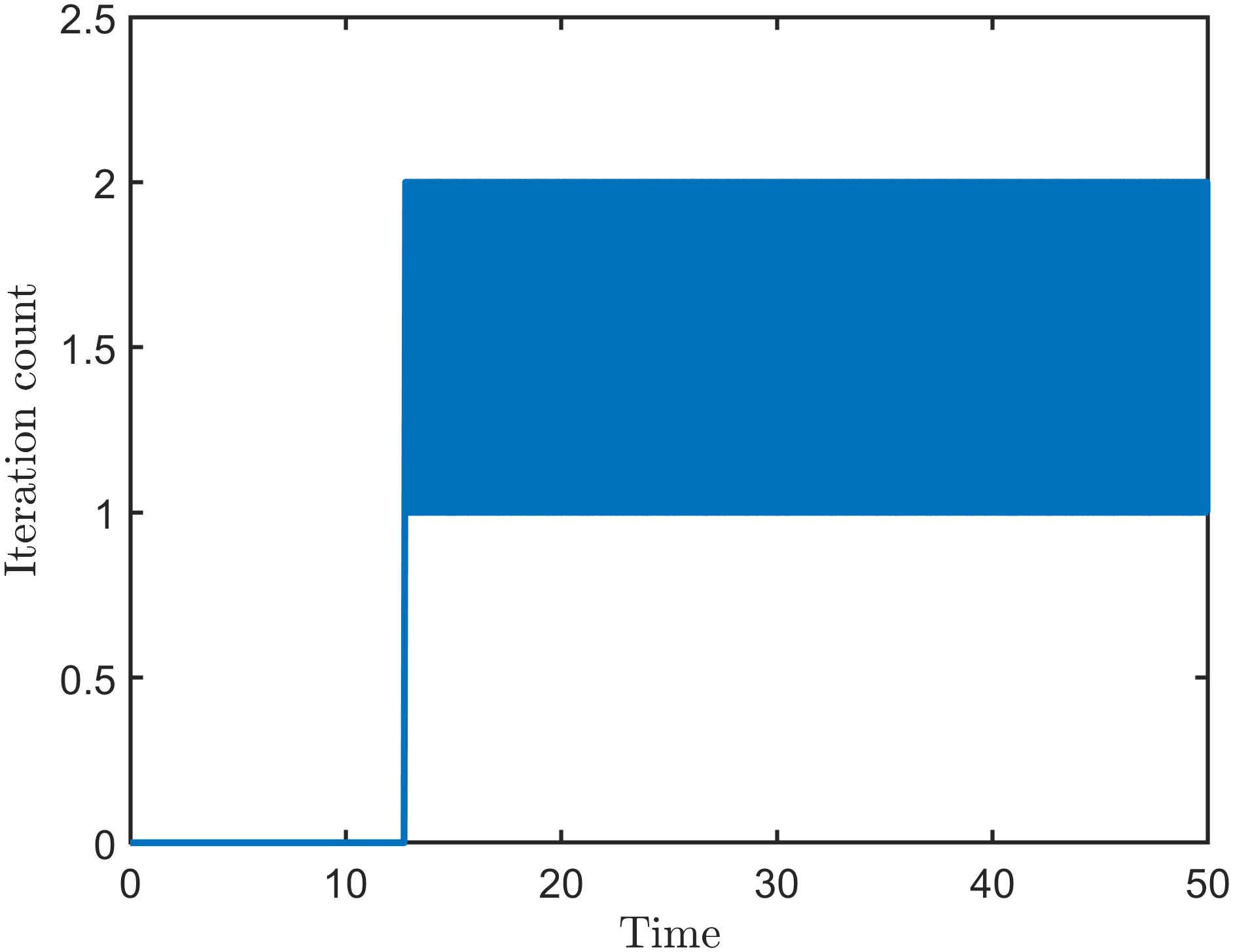}
	\end{subfigure}
	\hfill
	\begin{subfigure}{0.34\textwidth}
		\centering
		\includegraphics[width=\linewidth]{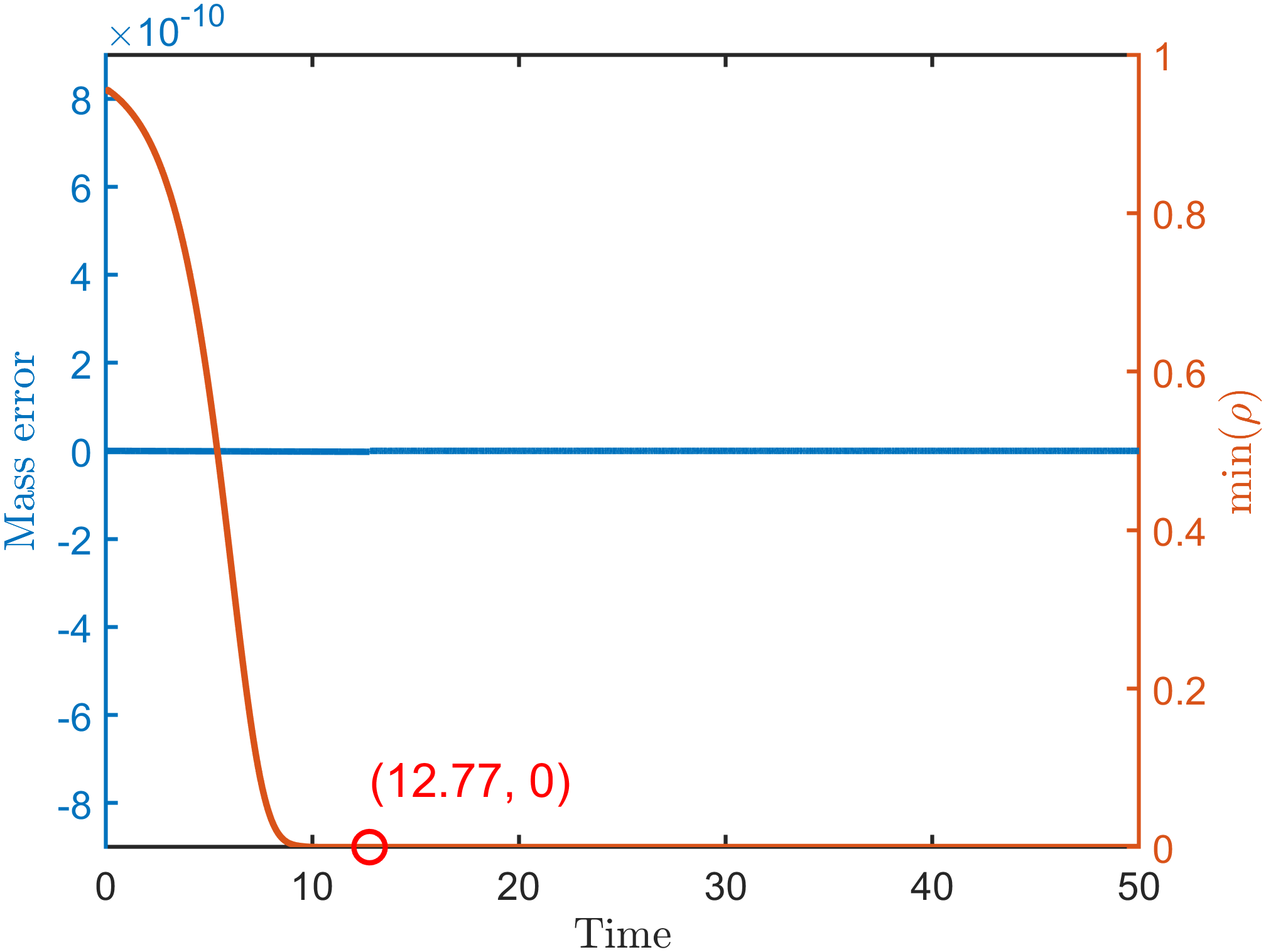}
	\end{subfigure}
	
	\par\vspace{5pt}
	
	% ================= Row 2 =================
	\begin{subfigure}{0.28\textwidth}
		\centering
		\includegraphics[width=\linewidth]{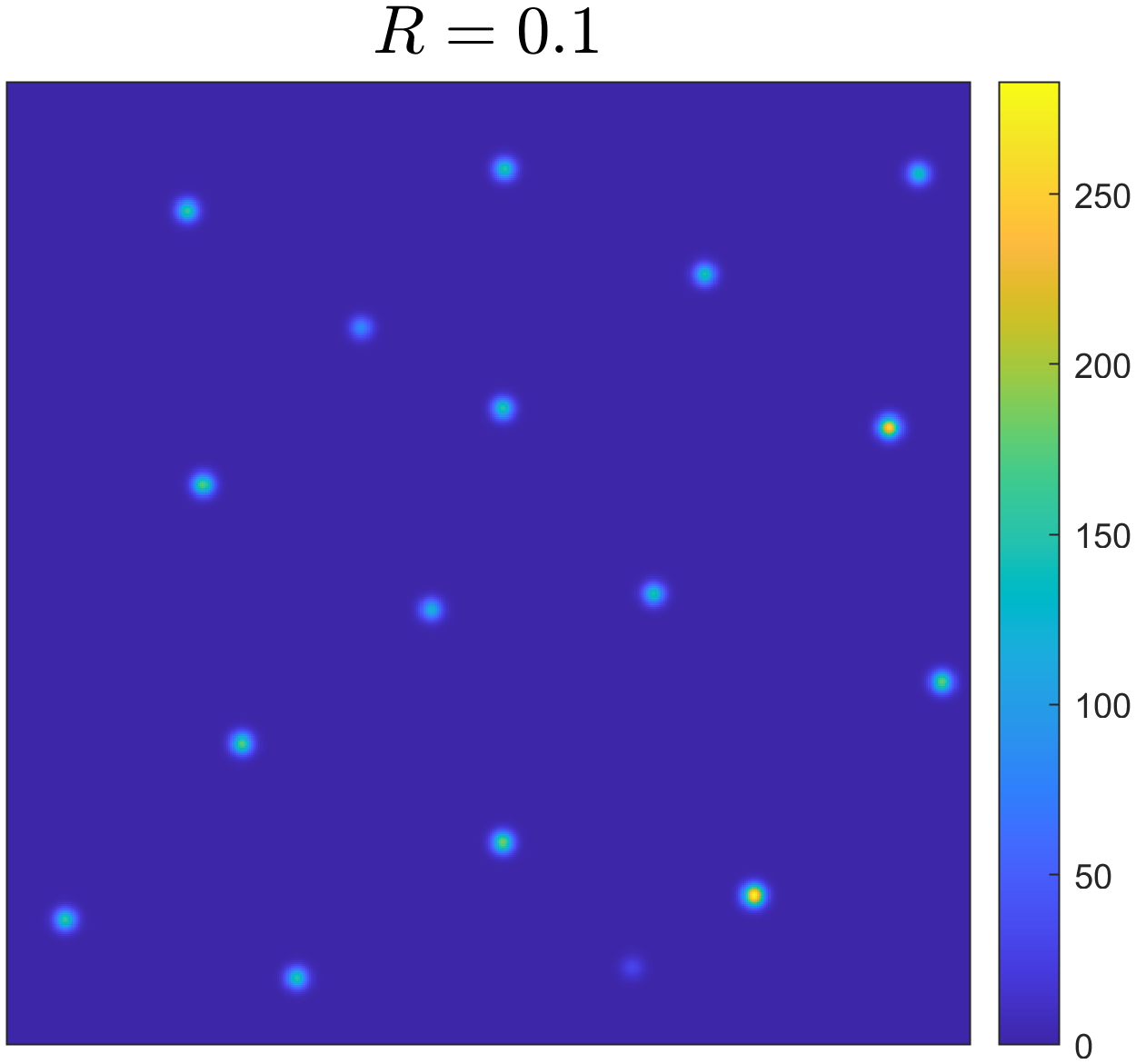}
	\end{subfigure}
	\hfill
	\begin{subfigure}{0.32\textwidth}
		\centering
		\includegraphics[width=\linewidth]{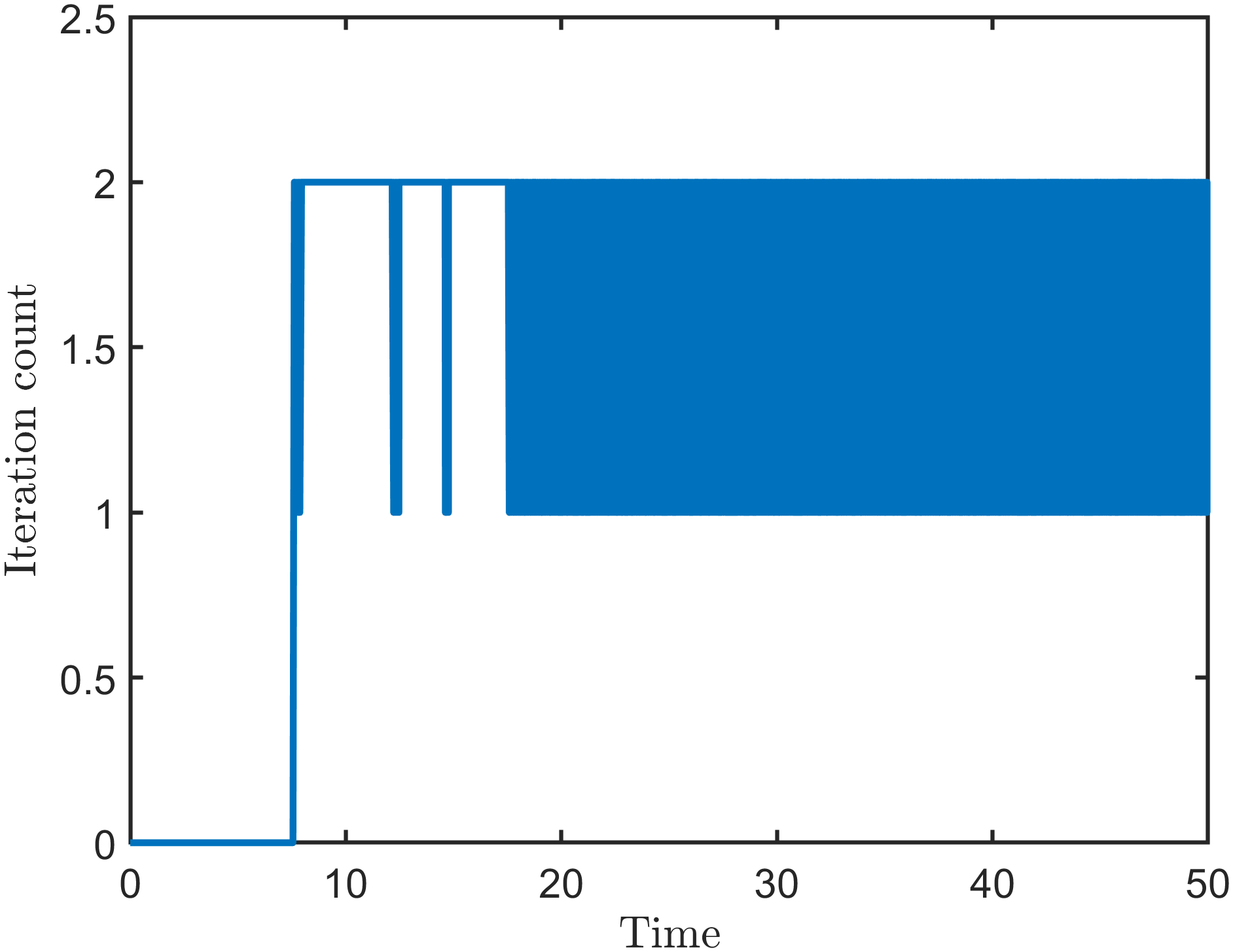}
	\end{subfigure}
	\hfill
	\begin{subfigure}{0.34\textwidth}
		\centering
		\includegraphics[width=\linewidth]{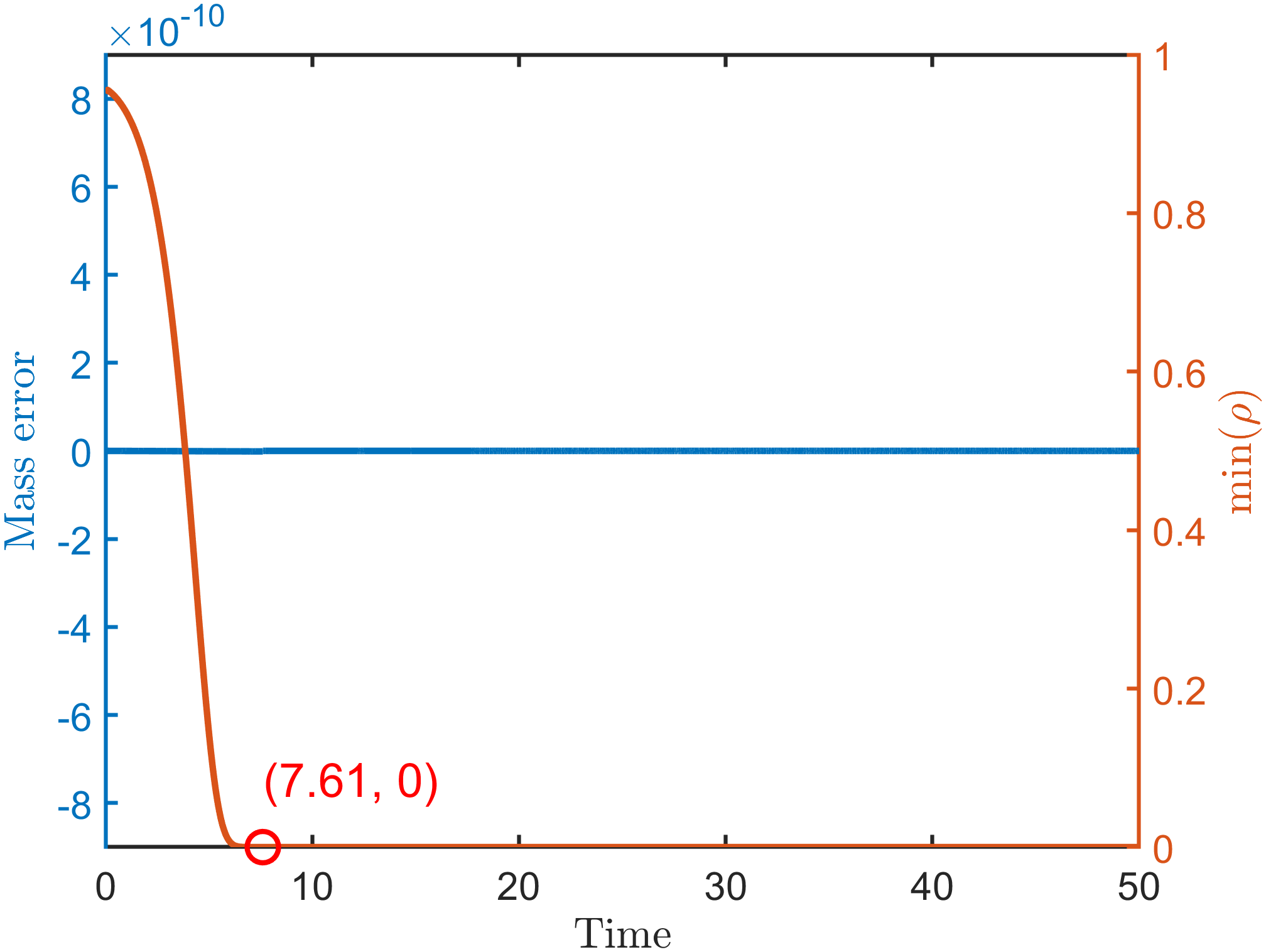}
	\end{subfigure}
	
	\caption{
		Numerical results for the structure-preserving scheme for $D_\rho=10^{-4}$.
		%Top row: results at $R=0.05$; bottom row: results at $R=0.10$.
		Left: snapshot of the opinion density $\rho(x,y)$ at $T=50$.
		Middle: number of iterations used in the Lagrange multiplier projection at each time step of $\rho$.
		Right: evolution of the mass error and the minimum value of $\rho$.
	}
	\label{fig:non-negativity_and_mass_conservation}
	\vspace{-10pt}
\end{figure}

Representative results for $R=0.05$ and $R=0.10$ are shown in Fig.~\ref{fig:non-negativity_and_mass_conservation}. 
The left panels show $\rho(x,y)$ at $T=50$, exhibiting well-resolved clustered patterns. 
The middle panels display the number of projection iterations, indicating computational efficiency. 
The right panels show the evolution of the mass error and $\min(\rho)$. 
The mass error remains at machine precision and $\rho \ge 0$ is preserved at all times, confirming that the scheme maintains both mass conservation and non-negativity.

\end{document}